
\documentclass[a4paper,11pt,oneside]{article}
\usepackage{latexsym,amsfonts,amsmath,amssymb,amsthm,verbatim,multicol,endnotes,xcolor}
\usepackage[utf8]{inputenc}
\usepackage{pstricks-add}
\usepackage[normalem]{ulem}
\usepackage{verbatim}

\newcommand{\G}{\Gamma}

\newcommand{\pres}[2]{\langle {#1}\ |\ {#2} \rangle}
\newtheorem{theorem}{Theorem}
\newtheorem{lemma}[theorem]{Lemma}
\newtheorem{prop}[theorem]{Proposition}
\newtheorem{corollary}[theorem]{Corollary}
\newtheorem{maintheorem}{Theorem}

\newtheorem{maincorollary}[maintheorem]{Corollary}
\newtheorem{defn}[theorem]{Definition}
\numberwithin{theorem}{section}
\voffset = -20pt \topmargin = 0pt \headheight = -20pt \headsep = 0pt

\textheight = 770pt \textwidth = 485pt \marginparwidth = 6pt
\linespread{1.2} \hoffset = -20pt \marginparwidth = 0pt
\marginparpush = 0pt \evensidemargin = 0pt \oddsidemargin = 0pt

\usepackage{array}
\newcolumntype{L}[1]{>{\raggedright\let\newline\\\arraybackslash\hspace{0pt}}m{#1}}
\newcolumntype{C}[1]{>{\centering\let\newline\\\arraybackslash\hspace{0pt}}m{#1}}
\newcolumntype{R}[1]{>{\raggedleft\let\newline\\\arraybackslash\hspace{0pt}}m{#1}}

\begin{document}
\title{Planar Whitehead graphs with cyclic symmetry arising from the study of Dunwoody manifolds}%
\author{James Howie\thanks{Howie was supported for part of this project by Leverhulme Trust Emeritus Fellowship EM-2018-023$\backslash$9.}~~and Gerald Williams\thanks{Williams was supported for part of this project by Leverhulme Trust Research Project Grant RPG-2017-334.}}

\maketitle

\begin{abstract}
A fundamental theorem in the study of Dunwoody manifolds is a classification of finite graphs on $2n$ vertices that satisfy seven conditions (concerning planarity, regularity, and a cyclic automorphism of order~$n$). Its significance is that if the presentation complex of a cyclic presentation is a spine of a 3-manifold then its Whitehead graph satisfies the first five conditions (the remaining conditions do not necessarily hold). In this paper we observe that this classification relies implicitly on an unstated 8th condition and that this condition is not necessary for such a presentation complex to be the spine of a 3-manifold. We expand the scope of Dunwoody's classification by classifying all graphs that satisfy the first five conditions.
\end{abstract}

\noindent \textbf{Keywords:} Dunwoody manifold, planar graph, cyclic presentation, Whitehead graph, circulant graph.

\noindent \textbf{MSC:} 05C10, 57M15 (primary); 57M05, 57M07, 57M25, 57M50 (secondary).

\section{Introduction}\label{sec:newintro}

A fundamental theorem in the study of so-called \em Dunwoody manifolds, \em which leads to the definition of a \em Dunwoody diagram \em (in~\cite{CattabrigaMulazzaniVesnin}), is Theorem~1 of~\cite{Dunwoody}. That theorem asserts a classification of all finite graphs $\Gamma$ that satisfy the following seven conditions:

\begin{itemize}
  \item[($1$)] $\Gamma$ is planar;
  \item[($2$)] $\Gamma$ is regular;
  \item[($3$)]
  \begin{itemize}
    \item[(a)] $\Gamma$ has no loops (i.e. no edge from a vertex to itself),
    \item[(b)] there may be more than one edge joining the same pair of vertices;
  \end{itemize}
  \item[($4$)] $\Gamma$ has $2n$ vertices, for some positive integer $n$;
  \item[($5$)] $\Gamma$ admits an automorphism $\theta$ acting as a regular permutation of order $n$ both on the edges and the vertices of $\Gamma$;
  \item[($6$)] in some $\theta$-orbit there is a pair of vertices that are joined by an edge;
  \item[($7$)] there are two vertices in different $\theta$-orbits that are joined by an edge.
\end{itemize}

(A permutation is \em regular \em if it is either trivial or has no fixed points and is the product of disjoint cycles of the same length.) Unfortunately, for reasons discussed below, the theorem as originally stated omits an implicitly assumed, but unstated, 8th condition and is false without it (a counterexample is given in Figure~\ref{fig:counterexample} where $\theta=(1,2,3,4,5,6)(7,8,9,10,11,12)$):

\begin{itemize}
  \item[($8$)] If $v,v'$ are adjacent vertices in the two orbits of $\theta$ then there is an edge joining $v$ and $\theta^k(v)$ for some $(k,n)=1$ and there is an edge joining $v'$ and $\theta^m(v')$ for some $(m,n)=1$.
\end{itemize}

As we now describe, the motivation for the introduction of these conditions in~\cite{Dunwoody} arose from the study of cyclic presentations of groups that correspond to spines of 3-manifolds, or equivalently, Heegaard splittings of 3-manifolds with cyclic symmetry.  The fact that condition~($8$) was overlooked in~\cite{Dunwoody} appears to be a consequence of the particular type of cyclic symmetry considered there.

The \em presentation complex \em $K$ of a group presentation $P=\pres{X}{R}$ is the 2-complex with one 0-cell $O$, a loop at $O$ for each generator $x\in X$ and a 2-cell for each relator (the boundary of that 2-cell spelling the relator). If $R$ is a regular neighbourhood of $O$ then $K\cap \partial R$ is a 1-dimensional cell complex called the \em Whitehead graph \em or \em link graph \em of $P$. Thus the Whitehead graph of ${P}$ is the graph with $2|X|$ vertices $v_x,v_x'$ ($x \in X$) and an edge $(v_x,v_y)$ (resp. $(v_x',v_y')$, $(v_x,v_y')$) for each occurrence of a cyclic subword $xy^{-1}$ (resp.\,$x^{-1}y$, $(xy)^{\pm 1}$) in a relator $r\in R$.  In~\cite{Neuwirth} Neuwirth provides an algorithm for deciding if the presentation complex of a group presentation with an equal number of generators and relators is a spine of a closed compact 3-manifold~$M$. A necessary condition for a presentation to correspond to a spine in this way is that its Whitehead graph is planar (see, for example, \cite[page~33]{AngeloiMetzler}). Since multiedges of a Whitehead graph clearly have no effect on the graph's planarity, it is convenient to define the \em reduced Whitehead graph \em of ${P}$ to be the graph obtained from the Whitehead graph of ${P}$ by replacing all multiedges between two vertices of the Whitehead graph of ${P}$ by a single edge.

A  cyclic presentation of a group is a presentation with an equal number of generators and relators that admits a cyclic symmetry. Presentations with an equal number of generators and relators are called \em balanced presentations \em and are of interest from both topological and algebraic perspectives. Cyclic presentations form an important subclass that are more tractable than arbitrary balanced presentations (the built-in symmetry can be exploited) and which can correspond to spines of manifolds that inherit the cyclic symmetry.

Formally, the \em cyclic presentation \em $P_n(w)$ is the presentation with generators $x_0,\ldots ,x_{n-1}$ (say) and relators $w(x_i,x_{i+1},\ldots, x_{i+n-1})$ ($0\leq i\leq n-1$, subscripts mod~$n$), where the \em defining word \em $w(x_0,x_1,\ldots ,x_{n-1})$ is some fixed word $w$ in the generators. It follows that the Whitehead graph $\Gamma$ of $P_n(w)$ satisfies (2),(3(b)),(4),(5), and if $w$ is cyclically reduced (i.e. if it has no cyclic subwords of the form $xx^{-1}$ or $x^{-1}x$ for any generator $x$) then $\Gamma$ satisfies (3(a)) and if it corresponds to a spine of a manifold then (1) also holds. The additional conditions ($6$),($7$) and ($8$) were imposed in order to obtain Dunwoody's classification theorem (Theorem~\ref{thm:DunwoodyOriginal}, below). The implicitly assumed condition ($8$) is not necessary for a cyclic presentation to correspond to a spine. We know of two (reduced) Whitehead graphs of cyclic presentations corresponding to spines of manifolds that demonstrate this. The cyclic presentation $P_{2m}(x_0x_1x_2^{-1})$ of the Fibonacci group $F(2,2m)$ corresponds to a spine of the \em Fibonacci manifold \em (see~\cite{CavicchioliSpaggiari},\cite{HKM}) and its Whitehead graph is of our type~(II.11), given in Figure~\ref{fig:(II.11)}. The cyclic presentation $P_{2m}(x_1(x_2^{-1}x_0)^l)$ ($l\geq 1$) corresponds to a spine of a manifold~\cite{Jeong},\cite{JeongWang} and its reduced Whitehead graph is of type~(II.14), given in Figure~\ref{fig:(II.14)}. As noted in~\cite{Dunwoody}, the Fibonacci manifolds satisfy ``a different sort of cyclic symmetry'' to the Dunwoody manifolds. The principal difference in symmetry seems to be that the generating symmetry of the Fibonacci manifolds is orientation-reversing rather than orientation-preserving. Indeed, the classification in~\cite{Dunwoody} is correct under the slightly stronger hypothesis that the symmetry in~($5)$ extends to an orientation-preserving self-homeomorphism of the ambient 2-sphere into which $\Gamma$ embeds. A spine $C$ may correspond to a cyclic presentation where the defining word $w$ is not cyclically reduced, in which case the Whitehead graph contains loops (in contrast to condition ($3$(a))). For example, the cyclic presentation $\pres{x_0}{x_0^2x_0^{-1}}$ corresponds to a spine of the 3-sphere $S^3$~\cite{Zeeman}, and the (Fibonacci) cyclic presentation $F(2,2)=\pres{x_0,x_1}{x_0x_1x_0^{-1}, x_1x_0x_1^{-1}}$ corresponds to a spine of $S^3$ by~\cite[Theorem~3]{CavicchioliSpaggiari},\cite[Theorem~A]{HKM}.

In this paper we correct and expand~\cite[Theorem~1]{Dunwoody}, by classifying all finite graphs $\Gamma$ that satisfy conditions ($1$)--($5$), but not necessarily conditions ($6$),($7$),($8$). The impact of the original omission of condition~(8) on the literature concerning Dunwoody manifolds is likely to be minor, as the significance of~\cite{Dunwoody} is the generation of interesting examples of cyclic presentations that correspond to spines of manifolds, rather than the classification. However, our classification expands the scope of Dunwoody's work by providing many new graphs that potentially arise as Whitehead graphs of such presentations. It is currently unknown which of these graphs do indeed arise in this way, beyond a few known cases which we describe at the end of this section.

To avoid a detailed tracking of the number of edges between every pair of vertices, instead of considering graphs that satisfy $(3)$ we consider graphs that satisfy the following variant:
\begin{itemize}
  \item[($3'$)]
  \begin{itemize}
    \item[(a)] $\Gamma$ has no loops (i.e. no edge from a vertex to itself),
    \item[(b)] there is at most one edge joining the same pair of vertices.
  \end{itemize}
\end{itemize}
The process of replacing all edges between any pair of vertices of a regular graph by a single edge may result in a graph that is not regular so to adjust for our variant condition ($3'$) we modify condition~($2$) to the following:
\begin{itemize}
  \item[($2'$)] $\Gamma$ can be made to be regular by replacing each edge of $\Gamma$ joining two of its vertices by a suitable (finite, positive) number of edges between those vertices in such a way that, given any pair of vertices $u,v$, the number of edges joining $u,v$ is equal to the number of vertices joining $\theta(u),\theta(v)$.
\end{itemize}

A classification of all finite graphs that satisfy conditions ($1$),($2$),($3$),($4$),($5$) can easily be obtained from one of all finite graphs that satisfy ($1$),($2'$),($3'$),($4$),($5$). The purpose of this paper is to obtain a classification of graphs satisfying the latter conditions.

Note that condition~($6$) appears somewhat asymmetric, in that it involves only one of the two orbits of vertices under the automorphism $\theta$. However, under the presence of condition ($2'$), if ($6$) holds then, in fact, for each $\theta$-orbit of vertices, there exists a pair of edges in that orbit that are joined by an edge.

A classification of all finite graphs that satisfy conditions ($1$),($2$),($3$),($4$),($5$),($6$),($7$),($8$)
is equivalent to one of all graphs that satisfy conditions ($1$),($2'$),($3'$),($4$),($5$),($6$),($7$),($8$). In this re-expression, and referring to~\cite[Figure~1]{Dunwoody} where a graph $\Gamma_n(a,b,c)$ is defined, Dunwoody's classification~\cite[Theorem~1]{Dunwoody}, can be stated as follows:

\begin{theorem}[{\cite[Theorem~1]{Dunwoody}}]\label{thm:DunwoodyOriginal}
Let $\Gamma$ be a graph satisfying ($1$),($2'$),($3'$),($4$),($5$),($6$),($7$),($8$). Then $\Gamma$ is isomorphic to either $\Gamma_n(1,1,1)$ or $\Gamma_n(1,1,0)$.
\end{theorem}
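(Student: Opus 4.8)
The plan is to recast hypotheses (1)--(8) as conditions on ``connection sets'' over $\Z_n$, and then to apply planarity twice --- first to each $\theta$-orbit of vertices separately, and then to $\Gamma$ as a whole --- to force $\Gamma$ to be a prism or an antiprism. By (4) and (5) we may write the two vertex-orbits as $\{a_0,\dots,a_{n-1}\}$ and $\{b_0,\dots,b_{n-1}\}$ with $\theta(a_i)=a_{i+1}$ and $\theta(b_i)=b_{i+1}$ (subscripts mod $n$); and since $\theta$ is a regular permutation of order $n$ on the edges, every edge-orbit has exactly $n$ edges, so there are symmetric sets $A,B\subseteq\Z_n\setminus\{0\}$ with $n/2\notin A\cup B$ (an ``antipodal'' edge $a_ia_{i+n/2}$ would lie in a shorter orbit) and a set $C\subseteq\Z_n$ such that $a_i\sim a_j\iff j-i\in A$, $b_i\sim b_j\iff j-i\in B$, $a_i\sim b_j\iff j-i\in C$, these being the incidences of a simple graph by $(3')$. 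By (7) we have $C\ne\emptyset$, and by (6) with the displayed remark $A$ and $B$ are nonempty too; and (8) applied to the adjacent pair $a_0,b_c$ ($c\in C$) shows that $A$ and $B$ each contain a unit of $\Z_n$; also $n\ge3$. We normalise using automorphisms: a ``multiplier'' relabelling $a_i\mapsto a_{ui},b_i\mapsto b_{ui}$ by a unit $u$ (it replaces $\theta$ by the still-admissible $\theta^u$) lets us assume $1\in A$; a translation $b_i\mapsto b_{i+t}$ (fixing $A,B$ and replacing $C$ by $C-t$) lets us assume $0\in C$; and we keep in reserve the reflection $i\mapsto-i$ on both orbits (fixing $A,B$, sending $C\mapsto-C$).

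Next I would show the $a$- and $b$-subgraphs are $n$-cycles. The subgraph induced on $\{a_i\}$ is the circulant graph $C_n(A)$ on vertex set $\Z_n$; it is planar and, as $1\in A$, connected with Hamilton cycle $a_0a_1\cdots a_{n-1}a_0$. By the classification of planar circulant graphs (or by directly locating $K_5$- and $K_{3,3}$-subdivisions), the only possibility with $|A|\ge4$ is $A=\{\pm1,\pm2\}$; but $C_n(\{1,2\})$ is $K_5$ when $n=5$, and for $n\ge6$ every planar embedding of it has at most two faces that are not triangles, each incident to fewer than $n$ of the $a_i$ --- whereas the $b$-subgraph, being connected, must lie in the closure of one face $F$ of this embedding, and $C\ne\emptyset$ together with $\theta$-symmetry forces \emph{every} $a_i$ to carry a cross-edge into $F$, a contradiction. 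Hence $A=\{\pm1\}$, so the $a$-subgraph is the $n$-cycle $a_0a_1\cdots a_{n-1}$. The same argument for $B$ (which contains a unit $s$, and $C_n(B)\cong C_n(s^{-1}B)$ with $1\in s^{-1}B$) gives $B=\{\pm s\}$ for a unit $s$, so the $b$-subgraph is the $n$-cycle $b_0b_sb_{2s}\cdots$.

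It then remains to pin down $C$. Fix a planar embedding of $\Gamma$ in $S^2$. The $a$-cycle separates $S^2$ into two open discs; the $b$-cycle, being connected and disjoint as a point set from the $a$-cycle, lies in one of them, say $D$, and then so does every cross-edge (which meets the $a$-cycle only at one endpoint). Thus $\Gamma$ embeds in $\overline{D}$ with the $a$-cycle as its boundary circle, in cyclic order $a_0,\dots,a_{n-1}$, and everything else inside. The subgraph formed by the $a$-cycle, the spokes $\{a_ib_i\}$ ($0\in C$) and the $b$-cycle is then a planar generalised Petersen graph $\mathrm{GP}(n,s)$, which is planar only for $s\equiv\pm1$; so after the reserved reflection $s=1$ and the $b$-cycle is $b_0b_1\cdots b_{n-1}$. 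In this embedding of the $n$-prism $\mathrm{GP}(n,1)$ in $\overline{D}$ the faces incident to $a_i$ are the outer $a$-gon and the quadrilaterals $a_{i-1}a_ib_ib_{i-1}$ and $a_ia_{i+1}b_{i+1}b_i$, so any further cross-edge $a_ib_{i+c}$ needs $i+c\in\{i-1,i,i+1\}$; hence $C\subseteq\{-1,0,1\}$. Finally $C=\{-1,0,1\}$ is impossible, since adjoining all of $a_ib_{i+1}$ (or all of $a_ib_{i-1}$) triangulates the prism into the antiprism, after which no further cross-edge fits into any face. So $C=\{0\}$, giving the $n$-prism, or $C=\{0,1\}$ after reflection, giving the $n$-antiprism; these are $\Gamma_n(1,1,0)$ and $\Gamma_n(1,1,1)$ of \cite[Figure~1]{Dunwoody} (and conversely both satisfy (1)--(8), so the classification is exact).

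The main obstacle is the passage from ``planar and $\theta$-symmetric'' to the rigid statements $|A|=|B|=2$, $s\equiv\pm1$, $|C|\le2$: one must marry the combinatorics of the near-maximal planar circulant $C_n(\{1,2\})$ to the equivariance imposed by $\theta$, and this is exactly where hypothesis (8) earns its keep --- it guarantees that the $b$-subgraph is a single $n$-cycle ``synchronised'' with the $a$-cycle, which is what makes the embedding in $\overline{D}$ essentially unique. Dropping (8) reinstates the extra planar families catalogued by the paper's main classification theorem.
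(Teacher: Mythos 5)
Your proof is correct in substance, but it takes a genuinely different route from the paper's. The paper never argues Theorem~\ref{thm:DunwoodyOriginal} directly: it proves the general classification Theorem~\ref{mainthm:planarWhiteheadgraphs} by an extensive case analysis built on Heuberger's Theorem~\ref{thm:planarcirculant}, the contraction constructions of Proposition~\ref{prop:nonplanargraphs} and Lemmas~\ref{lem:n=2or3}--\ref{lem:|A|=|B|=|Q|=1}, adds ($2'$) via Lemma~\ref{lem:3457isGamma} to obtain Corollary~\ref{maincor:allconditions}, and then reads Dunwoody's statement off Table~\ref{table:planarWHG} by imposing ($6$),($7$),($8$). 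You instead feed ($8$) in at the outset---$\mathcal{A}$ and $\mathcal{B}$ must contain units mod~$n$---which collapses the case analysis: planarity of the two orbit circulants plus your equivariant face-counting argument force both orbit subgraphs to be $n$-cycles, the disc embedding and planarity of the generalised Petersen subgraph synchronise the two cycles, and the prism/antiprism face analysis pins the cross-set down to $\{0\}$ or $\{0,\pm 1\}$. This is shorter, more geometric, and makes visible exactly where ($8$) is used; what it does not deliver is the paper's actual objective, namely the classification when ($6$)--($8$) are dropped.

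Three points should be tightened. First, ``$\mathrm{GP}(n,s)$ is planar only for $s\equiv\pm 1$'' is false as a blanket statement ($\mathrm{GP}(n,2)$ is planar for even $n$); you need the standing hypothesis $(n,s)=1$, under which it is true---or, more in the spirit of your argument, contract the spokes $a_ib_i$ to get the planar minor $\mathrm{circ}_n(\{1,s\})$ and apply Theorem~\ref{thm:planarcirculant}: the case $s\equiv\pm 2$ would force $n$ even and hence $s$ even, contradicting $(n,s)=1$, so $s\equiv\pm1$. Second, your claim about the planar embeddings of $\mathrm{circ}_n(\{1,2\})$ is asserted rather than proved; the weaker fact you actually use---no face is incident to all $n$ vertices---is an immediate Euler count (the embedding has $n+2$ faces, each boundary walk has length at least $3$, and the lengths sum to $4n$). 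Third, excluding $n/2$ from $\mathcal{A}\cup\mathcal{B}$ by the edge-orbit count is valid under the literal reading of ($5$) for the simple graph, but the paper applies ($5$) more liberally (its Class~III graphs have $n/2\in\mathcal{A}\cup\mathcal{B}$); even on that reading, ($8$) together with planarity excludes $n/2$ for $n\geq 6$ via Theorem~\ref{thm:planarcirculant}, and the one residual case $n=4$, $\mathcal{A}=\{\pm1,2\}$ is disposed of as in Lemma~\ref{lem:n=2or3} using Proposition~\ref{prop:nonplanargraphs}(v). With these repairs your argument is complete, and since both target graphs satisfy ($1$)--($8$) the classification is exact.
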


(The graphs $\Gamma_n(1,1,1)$ and $\Gamma_n(1,1,0)$ correspond to the graphs in Figures~\ref{fig:(I.1)} and~\ref{fig:(I.3)} (below), respectively.)
We express our results in terms of graphs $\Gamma_n(\mathcal{A},\mathcal{B},\mathcal{Q})$ that we now define.

\begin{defn}\label{def:Gamma_n(A,B,Q)}
\em Let $n\geq 1$, $\mathcal{A},\mathcal{B},\mathcal{Q}\subseteq\{0,\ldots ,n-1\}$ and define the graph $\Gamma_n(\mathcal{A},\mathcal{B},\mathcal{Q})$ to be the graph with vertices $v_0,\ldots ,v_{n-1}, v_0',\ldots ,v_{n-1}'$ and edges $(v_{i},v_{i+a})$, $(v_{i}',v_{i+b}')$, $(v_{i},v_{i+q}')$,  for all $0\leq i\leq n-1$, $a\in \mathcal{A},b\in \mathcal{B},q\in \mathcal{Q}$ (subscripts mod~$n$). We shall say that $\Gamma_n(\mathcal{A},\mathcal{B},\mathcal{Q})$ is \em properly given \em if  $a_1\not \equiv \pm a_2$~mod~$n$ and $b_1\not \equiv \pm b_2$~mod~$n$ for all $a_1,a_2\in \mathcal{A}$ and all $b_1,b_2\in \mathcal{B}$.\em
\end{defn}

The term `properly given' was introduced by Heuberger~\cite{Heuberger} when studying circulant matrices, and is a useful concept since $(v_i,v_{i+a})$ (resp. $(v_i',v_{i+b}')$) is an edge of $\Gamma_n(\mathcal{A},\mathcal{B},\mathcal{Q})$ if and only if $(v_i,v_{i-a})$ (resp. $(v_i',v_{i-b}')$) is an edge, and so there is no redundancy in the sets $\mathcal{A},\mathcal{B}$ when $\Gamma_n(\mathcal{A},\mathcal{B},\mathcal{Q})$ is properly given. In Lemma~\ref{lem:3457isGamma} we show that if a graph satisfies ($3'$),($4$),($5$) then it is isomorphic to $\Gamma_n(\mathcal{A},\mathcal{B},\mathcal{Q})$ for some $\mathcal{A},\mathcal{B}\subseteq\{1,\ldots ,n-1\}$ and $\mathcal{Q}\subseteq\{0,\ldots ,n-1\}$; if it also satisfies ($2'$) then  it is the Whitehead graph of some cyclic presentation $P_n(w)$ and either $\mathcal{A}=\mathcal{B}=\emptyset$ or $\mathcal{A}\neq \emptyset$ and $\mathcal{B}\neq \emptyset$, and if it also satisfies ($7$) then $\mathcal{Q}\neq \emptyset$. If ($7$) does not hold, then $\Gamma_n(\mathcal{A},\mathcal{B},\mathcal{Q})$ is the disjoint union of two circulant graphs, in which case a classification may be obtained directly from a result of Heuberger~\cite{Heuberger} (Lemma~\ref{thm:planarcirculant}, below); thus we may assume that ($7$) does hold. We show in Lemma~\ref{lem:connectedGamma} that all components of $\Gamma_n(\mathcal{A},\mathcal{B},\mathcal{Q})$ are then isomorphic, so we may also assume that $\Gamma_n(\mathcal{A},\mathcal{B},\mathcal{Q})$ is connected. The regularity condition ($2'$) is unimportant for our proof methods so we provide a more general classification that is independent of ($2'$) before specializing to the more restricted case. Specifically, Theorem~\ref{mainthm:planarWhiteheadgraphs} classifies all the connected graphs that satisfy ($1$),($3'$),($4$),($5$) and Corollary~\ref{maincor:allconditions} classify those that also satisfy ($2'$). Dunwoody's classification (Theorem~\ref{thm:DunwoodyOriginal}) follows as a special case of Corollary~\ref{maincor:allconditions}.

\begin{maintheorem}\label{mainthm:planarWhiteheadgraphs}
  Suppose $n\geq 2$, $\mathcal{A},\mathcal{B}\subseteq\{1,\ldots ,n-1\}$, $\emptyset\neq \mathcal{Q}\subseteq\{0,\ldots ,n-1\}$ and let $\Gamma=\Gamma_n(\mathcal{A},\mathcal{B},\mathcal{Q})$ be properly given. Then $\Gamma$ is connected and planar if and only if one of the cases of Table~\ref{table:planarWHG} holds for some $0\leq q\leq n-1$.
\end{maintheorem}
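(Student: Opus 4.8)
The plan is to prove both implications. For sufficiency I would, for each row of Table~\ref{table:planarWHG}, exhibit an explicit planar embedding of the corresponding $\Gamma_n(\mathcal{A},\mathcal{B},\mathcal{Q})$ and check connectedness from the criterion recorded in the next paragraph; since these embeddings are exactly the pictures in the figures cited by the table, that part is routine, and I concentrate below on necessity: if $\Gamma=\Gamma_n(\mathcal{A},\mathcal{B},\mathcal{Q})$ is properly given, connected and planar, then $(\mathcal{A},\mathcal{B},\mathcal{Q})$ is, up to the symmetries of the family, one of the listed shapes.

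First I record the available normalisations. Three isomorphisms are at our disposal: $v_i\mapsto v_i$, $v_i'\mapsto v_{i-q_0}'$ gives $\Gamma_n(\mathcal{A},\mathcal{B},\mathcal{Q})\cong\Gamma_n(\mathcal{A},\mathcal{B},\mathcal{Q}-q_0)$, so we may assume $0\in\mathcal{Q}$; the rescaling $v_i\mapsto v_{ci}$, $v_i'\mapsto v_{ci}'$ with $(c,n)=1$ gives $\Gamma_n(\mathcal{A},\mathcal{B},\mathcal{Q})\cong\Gamma_n(c\mathcal{A},c\mathcal{B},c\mathcal{Q})$; and the side-swap $v_i\leftrightarrow v_i'$ gives $\Gamma_n(\mathcal{A},\mathcal{B},\mathcal{Q})\cong\Gamma_n(\mathcal{B},\mathcal{A},-\mathcal{Q})$. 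Next, tracing which vertex indices are reachable from $v_0$ shows that $\Gamma$ is connected if and only if the subgroup of $\Z/n\Z$ generated by $\mathcal{A}\cup\mathcal{B}\cup(\mathcal{Q}-\mathcal{Q})$ is the whole group, which after the normalisation $0\in\mathcal{Q}$ reads $\langle\mathcal{A}\cup\mathcal{B}\cup\mathcal{Q}\rangle=\Z/n\Z$. In particular, for small $n$ this already severely restricts $\mathcal{A},\mathcal{B},\mathcal{Q}$, so those $n$ may be treated by hand and I assume $n$ large in the generic analysis.

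The heart of the proof is to bound $|\mathcal{A}|+|\mathcal{B}|+|\mathcal{Q}|$ and then run a finite case analysis. Since $\Gamma$ is properly given it has $2n$ vertices and $n(|\mathcal{A}|+|\mathcal{B}|+|\mathcal{Q}|)$ edges, up to an obvious correction of $-n/2$ for each of $\mathcal{A},\mathcal{B}$ that contains $n/2$, so Euler's inequality $|E|\le 3|V|-6$ forces $|\mathcal{A}|+|\mathcal{B}|+|\mathcal{Q}|\le 5$ for $n$ large (with an evident adjustment when $n$ is even and $n/2\in\mathcal{A}\cup\mathcal{B}$), while the girth-$4$ bound $|E|\le 2|V|-4$ sharpens this to $\le 3$ whenever $\Gamma$ is triangle-free, which covers most configurations. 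The analysis then contracts further: the subgraphs induced on $\{v_0,\dots,v_{n-1}\}$ and on $\{v_0',\dots,v_{n-1}'\}$ are the circulant graphs $C_n(\mathcal{A})$ and $C_n(\mathcal{B})$ (vertices $0,\dots,n-1$, an edge $\{i,i+s\}$ for each connection value $s$), which must be planar, so by Heuberger's classification (Lemma~\ref{thm:planarcirculant}) each of $\mathcal{A},\mathcal{B}$ lies, up to rescaling, in a short explicit list; moreover when every $v_i'$ is pendant (i.e.\ $\mathcal{B}=\emptyset$, $|\mathcal{Q}|=1$) it does not affect planarity, and when every $v_i'$ has degree $2$ (i.e.\ $\mathcal{B}=\emptyset$, $|\mathcal{Q}|=2$) we may suppress the $v_i'$ and reduce to the planarity of $C_n(\mathcal{A}\cup\{q\})$; symmetrically in the $v_i$, and including the fully bipartite case $\mathcal{A}=\mathcal{B}=\emptyset$, where a short direct treatment of $|\mathcal{Q}|\le 3$ suffices. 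What remains is a finite list of genuinely two-sided pairs $(\mathcal{A},\mathcal{B})$, and for each such pair the connectivity condition together with the degree and Euler bounds leaves only finitely many $\mathcal{Q}$ to test, up to the symmetries $\mathcal{Q}\mapsto c\mathcal{Q}$ and $\mathcal{Q}\mapsto-\mathcal{Q}$. For each surviving triple I would either display the planar embedding recorded in Table~\ref{table:planarWHG}, or produce a subdivision of $K_5$ or $K_{3,3}$; for the infinite families the latter is obtained by contracting the runs $v_0v_1\cdots v_{n-2}$ and $v_0'v_1'\cdots v_{n-2}'$ down to a graph of bounded size, so that one fixed non-planar minor certifies non-planarity simultaneously for all admissible $n$ (using that contraction preserves planarity).

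I expect the main obstacle to be organisational rather than conceptual: keeping the case analysis finite, non-redundant, and correctly matched to the rows of the table. Two points need care. First, for each infinite family one must establish planarity or non-planarity \emph{uniformly} in $n$; the reduction of long circulant runs to a fixed minor handles the non-planar families, but one has to verify the contracted graph does not depend on $n$ modulo some small number, and, on the positive side, that wrapping the corresponding planar strip around an annulus is consistent — this consistency issue is precisely the phenomenon underlying the gap between conditions~($5$) and~($8$) discussed in the introduction. Second, the small-$n$ exceptions (roughly $n\le 6$, where Heuberger's list has sporadic entries, where $n/2$ may lie in $\mathcal{A}$ or $\mathcal{B}$, and where ``properly given'' permits larger $\mathcal{A},\mathcal{B}$) must be enumerated separately and then merged with the generic families; checking that the generic list and the sporadic cases together reproduce exactly Table~\ref{table:planarWHG}, with nothing omitted and nothing duplicated, is the delicate bookkeeping step.
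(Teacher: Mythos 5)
Your skeleton is essentially the paper's: Heuberger's Theorem~\ref{thm:planarcirculant} applied to the circulant subgraphs $\mathrm{circ}_n(\mathcal{A})$, $\mathrm{circ}_n(\mathcal{B})$ and to the circulants obtained by contracting a matching of $\mathcal{Q}$-edges or suppressing degree-two vertices; fixed $K_5/K_{3,3}$ minors obtained by contracting long runs to certify non-planarity uniformly in $n$; the figures for sufficiency; and a separate small-$n$ check. The genuinely new ingredient is the Euler-formula bound on $|\mathcal{A}|+|\mathcal{B}|+|\mathcal{Q}|$, which the paper does not use (it proves the size restrictions by explicit contractions in Lemmas~\ref{lem:nonplanarsizesPART1} and~\ref{lem:nonplanarsizesPART2}). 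However, as written your case decomposition has a concrete hole. The configurations with exactly one of $\mathcal{A},\mathcal{B}$ empty and $|\mathcal{Q}|\geq 3$ are covered by none of your named cases: the pendant/suppression reductions need $|\mathcal{Q}|\leq 2$, the ``fully bipartite'' case needs $\mathcal{A}=\mathcal{B}=\emptyset$, and the residual analysis is restricted to ``genuinely two-sided'' pairs. This is not an empty corner: rows (II.2)--(II.4) of Table~\ref{table:planarWHG} have one side empty and $|\mathcal{Q}|=3$ and are planar, so the scheme as stated would fail to generate part of the classification, and it would also leave unexcluded the one-sided case $|\mathcal{A}\cup\mathcal{B}|=1$, $|\mathcal{Q}|=4$, which passes your Euler bound ($1+4\leq 5$) and is not triangle-free in general, so the girth-$4$ refinement as you invoked it does not apply. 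The paper spends Lemmas~\ref{lem:nonplanarsizesPART1}, \ref{lem:A=B=empty} and~\ref{lem:|Q|=3} on precisely this territory.

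The repair is available inside your own framework, and is worth noting because it is slicker than the paper at one point: apply the girth-$4$ bound not to $\Gamma$ but to the bipartite simple subgraph $\Gamma_n(\emptyset,\emptyset,\mathcal{Q})$, every component of which is $|\mathcal{Q}|$-regular; planarity then forces $|\mathcal{Q}|\leq 3$ unconditionally, which replaces the paper's four-contraction argument in Lemma~\ref{lem:nonplanarsizesPART1}. But you still need a dedicated $|\mathcal{Q}|=3$ analysis with one side empty or of size one (the paper's Lemmas~\ref{lem:A=B=empty} and~\ref{lem:|Q|=3}), both to recover (II.1)--(II.4),(II.6) and to eliminate, e.g., $\mathcal{B}=\{s,n/2\}$ with $|\mathcal{Q}|=3$. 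A second, smaller gloss: $|\mathcal{A}|=|\mathcal{B}|=2$, $|\mathcal{Q}|=1$ gives total $5$ and so also survives your Euler bound, yet no such row exists in the table; excluding it is the content of the paper's Lemma~\ref{lem:nonplanarsizesPART2} (via Proposition~\ref{prop:nonplanargraphs}(v),(vii)), and in your plan it would have to be done case by case by your ``contract the runs to a bounded minor'' device --- possible, but it is a step that must actually be carried out, not a consequence of the counting.
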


\begin{maincorollary}\label{maincor:allconditions}
  Suppose $n\geq 2$, $\mathcal{A},\mathcal{B}\subseteq\{1,\ldots ,n-1\}$, $\emptyset\neq \mathcal{Q}\subseteq\{0,\ldots ,n-1\}$ and let $\Gamma=\Gamma_n(\mathcal{A},\mathcal{B},\mathcal{Q})$ be properly given. Then $\Gamma$ is connected and satisfies ($1$) and ($2'$) if and only if one of the cases of Table~\ref{table:planarWHG} holds for some $0\leq q\leq n-1$ where either $\mathcal{A}=\mathcal{B}=\emptyset$ or $\mathcal{A}\neq \emptyset$ and $\mathcal{B}\neq \emptyset$.
\end{maincorollary}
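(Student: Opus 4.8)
The plan is to obtain Corollary~\ref{maincor:allconditions} from Theorem~\ref{mainthm:planarWhiteheadgraphs} together with an exact description of when condition $(2')$ holds for a properly given $\Gamma=\Gamma_n(\mathcal{A},\mathcal{B},\mathcal{Q})$. Since $(1)$ is planarity and Theorem~\ref{mainthm:planarWhiteheadgraphs} already says that $\Gamma$ is connected and planar precisely when it is one of the cases of Table~\ref{table:planarWHG}, it is enough to prove the following claim: \emph{for a properly given $\Gamma_n(\mathcal{A},\mathcal{B},\mathcal{Q})$ with $\mathcal{A},\mathcal{B}\subseteq\{1,\dots,n-1\}$ and $\emptyset\neq\mathcal{Q}\subseteq\{0,\dots,n-1\}$, condition $(2')$ holds if and only if $\mathcal{A}=\mathcal{B}=\emptyset$ or $\mathcal{A}\neq\emptyset$ and $\mathcal{B}\neq\emptyset$.} One direction of this is already contained in Lemma~\ref{lem:3457isGamma}, but it also drops out of the degree computation below, so I would prove it directly.

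First I would sort out the $\theta$-orbits of edges. With $\theta(v_i)=v_{i+1}$ and $\theta(v_i')=v_{i+1}'$, the edge $(v_0,v_a)$ has orbit $\{(v_i,v_{i+a}):0\le i\le n-1\}$; because $\Gamma$ is properly given we have $2a\not\equiv 0$ and $a_1\not\equiv\pm a_2$ for distinct $a_1,a_2\in\mathcal{A}$, so these orbits have size exactly $n$ and are pairwise distinct as $a$ runs over $\mathcal{A}$, and likewise for the orbits $\{(v_i',v_{i+b}'):i\}$ ($b\in\mathcal{B}$) and $\{(v_i,v_{i+q}'):i\}$ ($q\in\mathcal{Q}$). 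Hence the edge set of $\Gamma$ splits as a disjoint union of $|\mathcal{A}|+|\mathcal{B}|+|\mathcal{Q}|$ orbits indexed by $\mathcal{A}\sqcup\mathcal{B}\sqcup\mathcal{Q}$, and a $\theta$-equivariant choice of multiplicities is the same thing as a choice of positive integers $m_a\ (a\in\mathcal{A})$, $m_b\ (b\in\mathcal{B})$, $m_q\ (q\in\mathcal{Q})$. Still using that $\Gamma$ is properly given, the vertex $v_i$ has the $2|\mathcal{A}|$ distinct neighbours $v_{i\pm a}$ ($a\in\mathcal{A}$) and the $|\mathcal{Q}|$ distinct neighbours $v_{i+q}'$ ($q\in\mathcal{Q}$), so in the multigraph it has degree $2\sum_{a\in\mathcal{A}}m_a+\sum_{q\in\mathcal{Q}}m_q$; symmetrically $v_i'$ has degree $2\sum_{b\in\mathcal{B}}m_b+\sum_{q\in\mathcal{Q}}m_q$. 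All the $v_i$ have equal degree and all the $v_i'$ have equal degree, so the multigraph is regular if and only if $\sum_{a\in\mathcal{A}}m_a=\sum_{b\in\mathcal{B}}m_b$. This equation has a solution in positive integers exactly when $\mathcal{A}=\mathcal{B}=\emptyset$ (both sides empty) or $\mathcal{A}\neq\emptyset$ and $\mathcal{B}\neq\emptyset$ (take $m_a=|\mathcal{B}|$ for all $a$, $m_b=|\mathcal{A}|$ for all $b$, and $m_q=1$ for all $q$); when exactly one of $\mathcal{A},\mathcal{B}$ is empty the two sides are unequal, so $(2')$ fails. This establishes the claim.

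Putting the pieces together: $\Gamma$ is connected and satisfies $(1)$ and $(2')$ iff ($\Gamma$ is connected and planar) and $(2')$, iff (by Theorem~\ref{mainthm:planarWhiteheadgraphs}) one of the cases of Table~\ref{table:planarWHG} holds for some $0\le q\le n-1$ and (by the claim) $\mathcal{A}=\mathcal{B}=\emptyset$ or $\mathcal{A}\neq\emptyset$ and $\mathcal{B}\neq\emptyset$, which is the assertion of the Corollary. There is no real obstacle here beyond Theorem~\ref{mainthm:planarWhiteheadgraphs} itself; the only thing needing a little care is the orbit-and-degree count, where the ``properly given'' hypothesis is precisely what stops orbits from collapsing and neighbours from coinciding and so makes the degree formulas, and hence the reduction of $(2')$ to a single linear equation, valid.
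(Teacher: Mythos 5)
Your proof is correct and follows essentially the same route as the paper: the paper deduces the corollary immediately from Theorem~\ref{mainthm:planarWhiteheadgraphs} together with Lemma~\ref{lem:3457isGamma}(b), whose content is exactly your claim that ($2'$) holds for a properly given $\Gamma_n(\mathcal{A},\mathcal{B},\mathcal{Q})$ precisely when $\mathcal{A}=\mathcal{B}=\emptyset$ or both are non-empty, and the paper's proof of that lemma is the same orbit-wise multiplicity and degree count that you carry out. One small slip: being properly given does not force $2a\not\equiv 0$~mod~$n$ (Class~III of the table allows $n/2\in\mathcal{A}$ or $\mathcal{B}$, e.g.\ $\mathcal{A}=\{s,n/2\}$), and in that case the corresponding edge orbit has size $n/2$ and contributes $m_{n/2}$ only once to the degree of each $v_i$; the regularity equation changes accordingly, but it is still solvable in positive integers exactly when neither or both of $\mathcal{A},\mathcal{B}$ are empty, so your conclusion is unaffected.
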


We exhibit the graphs $\Gamma_n(\mathcal{A},\mathcal{B},\mathcal{Q})$ from Theorem~\ref{mainthm:planarWhiteheadgraphs} and Corollary~\ref{maincor:allconditions} in Figures~\ref{fig:(I.1)}--\ref{fig:(III.14)} where we denote the vertices $v_i$ and $v_i'$ by $i$ and $\bar{i}$ (for clarity, in Figures~\ref{fig:(III.1)} and~\ref{fig:(III.2)} we omit some vertex labels that can be easily deduced). These fall into 3 classes: Class~I, which holds for all $n$, Class~II which holds for all even $n$, and Class~III which holds for all $n\equiv 2$~mod~$4$. We now summarize the situation for those graphs from Corollary~\ref{maincor:allconditions}, i.e.\,those that are potentially reduced Whitehead graphs of cyclic presentations corresponding to spines of manifolds.
In Class~I there are three isomorphism classes of connected graphs that satisfy ($1$),($2'$),($3'$),($4$),($5$),($7$). Two of these (types (I.1) and (I.3)) satisfy both ($6$) and ($8$) and correspond to Dunwoody manifolds, and one of them (type~(I.5)) does not satisfy ($6$)  (and hence does not satisfy ($8$)). It follows that Dunwoody's original classification~\cite{Dunwoody} is correct when $n$ is odd. In Class~II there are 7 isomorphism classes of connected graphs that satisfy ($1$),($2'$),($3'$),($4$),($5$),($7$); none of these satisfy (8), all but one of them (type~(II.6)) satisfies ($6$). We remark that, when $n$ is even, conditions (I.3) and (II.6) yield isomorphic graphs: if $n$ is even and $(n,s)=1$ then $\Gamma_n(\{s\},\{s\},\{q\})\cong \Gamma_n(\emptyset,\emptyset,\{q,q+s,q-s\})$. As previously mentioned, two of the graphs in Class~II are known to correspond to spines of manifolds: type~(II.11) to the Fibonacci manifolds of~\cite{HKM} and type~(II.14) to the manifolds of~\cite{Jeong},\cite{JeongWang}. In Class~III there are 9 isomorphism classes of connected graphs that satisfy ($1$),($2'$),($3'$),($4$),($5$),($7$); precisely two of them satisfy (8), all of them satisfy ($6$) and none are known to correspond to spines of manifolds. We expect that some of the remaining graphs are also reduced Whitehead graphs of cyclic presentations corresponding to spines of manifolds; investigating which of them are is a subject for future research.

\section{Circulant graphs and the graphs $\Gamma_n(\mathcal{A},\mathcal{B},\mathcal{Q})$}\label{sec:circulants}

We first prove the previously mentioned lemma, which relates the conditions ($2'$),($3'$),($4$),($5$),($6$),($7$) to the graphs $\Gamma_n(\mathcal{A},\mathcal{B},\mathcal{Q})$ and to Whitehead graphs of cyclic presentations.

\begin{lemma}\label{lem:3457isGamma}
Let $\Gamma$ be a graph.
\begin{itemize}
  \item[(a)] The following are equivalent:
\begin{itemize}
  \item[(i)] conditions ($3'$(b)),($4$),($5$) (resp.\,and ($3'$(a))) hold;
  \item[(ii)] $\Gamma$ is isomorphic to $\Gamma_n(\mathcal{A},\mathcal{B},\mathcal{Q})$ for some $\mathcal{A},\mathcal{B},\mathcal{Q}\subseteq \{0,\ldots, n-1\}$ (resp.\,where $0\not \in \mathcal{A}$ and $0\not \in \mathcal{B}$).
\end{itemize}
\item[(b)] The following are equivalent:
\begin{itemize}
  \item[(i)] conditions ($2'$),($3'$(b)),($4$),($5$) (resp.\,and ($3'$(a))) hold;
  \item[(ii)] $\Gamma$ is isomorphic to $\Gamma_n(\mathcal{A},\mathcal{B},\mathcal{Q})$ for some $\mathcal{A},\mathcal{B},\mathcal{Q}\subseteq \{0,\ldots, n-1\}$ where either $\mathcal{A}=\emptyset$ and $\mathcal{B}=\emptyset$ or $\mathcal{A}\neq \emptyset$ and $\mathcal{B}\neq \emptyset$  (resp.\,where $0\not \in \mathcal{A}$ and $0\not \in \mathcal{B}$);
  \item[(iii)] $\Gamma$ is isomorphic to the reduced Whitehead graph of a cyclic presentation $P_n(w)$ (resp.\,where $w$ is cyclically reduced).
\end{itemize}
\end{itemize}
\end{lemma}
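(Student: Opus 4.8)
The plan is to prove (a) first and then bootstrap (b) from it. For the implication (i)$\Rightarrow$(ii) of (a) I would use $\theta$ to coordinatise $\Gamma$: under (3$'$(b)),(4),(5) the map $\theta$ permutes the $2n$ vertices regularly of order $n$, hence (the case $n=1$ being trivial) as a product of exactly two $n$-cycles, and pushing a chosen vertex of each orbit around by $\theta$ labels the two orbits $v_0,\dots,v_{n-1}$ and $v_0',\dots,v_{n-1}'$ with $\theta(v_i)=v_{i+1}$, $\theta(v_i')=v_{i+1}'$ (indices mod $n$). Since $\theta$ is an automorphism the edge set is $\theta$-invariant and, the two vertex orbits being the $\theta$-orbits, every edge is of one of the three types $v_iv_j$, $v_i'v_j'$, $v_iv_j'$, each type preserved by $\theta$; $\theta$-invariance of a $v_iv_j$ edge forces $\{v_k,v_{k+(j-i)}\}$ to be an edge for all $k$, so collecting the residues $j-i$ obtained this way gives $\mathcal{A}$, and likewise $\mathcal{B}$ and $\mathcal{Q}$ come from the $v'$-$v'$ and $v$-$v'$ edges, giving $\Gamma\cong\Gamma_n(\mathcal{A},\mathcal{B},\mathcal{Q})$; a loop at $v_i$ (resp.\ $v_i'$) is present exactly when $0\in\mathcal{A}$ (resp.\ $0\in\mathcal{B}$), which handles the parenthetical case. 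For (ii)$\Rightarrow$(i) one checks directly that the shift $v_i\mapsto v_{i+1}$, $v_i'\mapsto v_{i+1}'$ is an automorphism of $\Gamma_n(\mathcal{A},\mathcal{B},\mathcal{Q})$ of order $n$ acting regularly on vertices and edges, with (4) and (3$'$(b)) immediate from the definition and (3$'$(a)) equivalent to $0\notin\mathcal{A}\cup\mathcal{B}$.

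For (b), the equivalence (i)$\Leftrightarrow$(ii) combines (a) with a degree count. By (a), conditions (3$'$(b)),(4),(5) already yield $\Gamma\cong\Gamma_n(\mathcal{A},\mathcal{B},\mathcal{Q})$, so I must show that adjoining (2$'$) is equivalent to the dichotomy ``$\mathcal{A}=\mathcal{B}=\emptyset$ or $\mathcal{A}\ne\emptyset\ne\mathcal{B}$''. A $\theta$-equivariant assignment of positive edge-multiplicities is a choice of one positive integer per $\theta$-orbit of edges, say $\alpha_a$ for the $v$-$v$ orbit of $a\in\mathcal{A}$, $\beta_b$ for the $v'$-$v'$ orbit of $b\in\mathcal{B}$, and $\gamma_q$ for the cross orbit of $q\in\mathcal{Q}$; the shift is an automorphism of the resulting multigraph, so regularity is equivalent to $\deg v_0=\deg v_0'$. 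Computing, $\deg v_0=2\sum'\alpha_a+\sum_q\gamma_q$ and $\deg v_0'=2\sum'\beta_b+\sum_q\gamma_q$ (sums over $\pm$-pairs, with $\alpha_{n/2}$, resp.\ $\beta_{n/2}$, adjoined once rather than doubled when $n$ is even and $n/2\in\mathcal{A}$, resp.\ $\mathcal{B}$), so the condition reduces to $2\sum'\alpha_a=2\sum'\beta_b$ up to the $n/2$-terms. The left side is the empty sum $0$ precisely when $\mathcal{A}=\emptyset$ and a positive integer otherwise, and each side, when nonempty, realises every sufficiently large even integer; hence the equation has a positive-integer solution iff both sides are empty or both are nonempty, and the converse constructions are immediate.

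The equivalence (ii)$\Leftrightarrow$(iii) is where the substance lies. For (iii)$\Rightarrow$(ii): the reduced Whitehead graph of $P_n(w)$ has one edge for each length-$2$ cyclic subword of each relator, and by the cyclic symmetry of the presentation it satisfies (3$'$(b)),(4),(5) with $\theta$ the shift, so (a) gives $\Gamma\cong\Gamma_n(\mathcal{A},\mathcal{B},\mathcal{Q})$. Unwinding the edge-assignment rules, $\mathcal{A}$ (resp.\ $\mathcal{B}$) is empty iff $w$ contains no cyclic subword $x_cx_d^{-1}$ (resp.\ $x_c^{-1}x_d$); since the sign changes an even number of times round the cyclic word $w$, with each occurrence of a positive letter followed by a negative one producing a $v$-$v$ edge and each occurrence of a negative letter followed by a positive one a $v'$-$v'$ edge, $\mathcal{A}=\emptyset\Leftrightarrow w$ uses a single sign $\Leftrightarrow\mathcal{B}=\emptyset$. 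Moreover $w$ is cyclically reduced iff it has no length-$2$ cyclic subword $x_ix_i^{-1}$ or $x_i^{-1}x_i$, i.e.\ iff $0\notin\mathcal{A}\cup\mathcal{B}$.

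For the converse (ii)$\Rightarrow$(iii) --- the step I expect to be the main obstacle --- I would realise $w$ as a closed directed walk in the digraph $D$ on the $2n$ symbols $x_i^{+},x_i^{-}$ whose arcs are exactly the admissible length-$2$ subwords: $x_c^{+}\to x_d^{-}$ for $d-c$ among the residues $\pm a$ ($a\in\mathcal{A}$), $x_c^{-}\to x_d^{+}$ for $d-c$ among the $\pm b$ ($b\in\mathcal{B}$), $x_c^{+}\to x_d^{+}$ for $d-c\in\mathcal{Q}$, and $x_c^{-}\to x_d^{-}$ for $d-c\in-\mathcal{Q}$. Because $D$ carries \emph{precisely} the admissible subwords, any closed walk in $D$ produces a word whose reduced Whitehead graph is contained in $\Gamma_n(\mathcal{A},\mathcal{B},\mathcal{Q})$, so it suffices to exhibit a closed directed walk traversing, for each element of $\mathcal{A}\cup\mathcal{B}\cup\mathcal{Q}$, an arc realising the corresponding edge-orbit. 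Under the dichotomy one exists: if $\mathcal{A},\mathcal{B},\mathcal{Q}$ are all nonempty the $\pm\mathcal{A}$- and $\pm\mathcal{B}$-arcs link the $+$ and $-$ layers and one checks that the strong component of $x_0^{+}$ already contains an arc of every required offset; if $\mathcal{A}=\mathcal{B}=\emptyset$ one stays inside the $+$ layer; if all three are empty take $w$ empty. A standard path-concatenation argument then yields a closed directed walk covering all arcs of the relevant strong component, hence a defining word $w$ with reduced Whitehead graph equal to $\Gamma_n(\mathcal{A},\mathcal{B},\mathcal{Q})$; if $0\notin\mathcal{A}\cup\mathcal{B}$ the digraph $D$ has no arc $x_i^{+}\to x_i^{-}$ or $x_i^{-}\to x_i^{+}$, so this $w$ is automatically cyclically reduced. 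Finally, if exactly one of $\mathcal{A},\mathcal{B}$ is empty then $D$ has no arc leaving one of the two layers, so no closed walk can use an arc of the nonempty one of $\mathcal{A},\mathcal{B}$, re-proving that direction of the dichotomy; the single genuinely delicate point throughout is verifying that $D$ records exactly the admissible subwords, so that the walk introduces no edge outside $\Gamma_n(\mathcal{A},\mathcal{B},\mathcal{Q})$.
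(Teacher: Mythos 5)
Your treatment of (a) and of the equivalence (b)(i)$\Leftrightarrow$(b)(ii) is essentially the paper's: coordinatise the two vertex orbits by $\theta$, then observe that a $\theta$-equivariant choice of multiplicities makes the graph regular if and only if $\deg v_0=\deg v_0'$ can be arranged, which happens exactly when $\mathcal{A},\mathcal{B}$ are both empty or both nonempty. (Your aside that each nonempty side realises ``every sufficiently large even integer'' is slightly off when $n/2\in\mathcal{A}$ or $n/2\in\mathcal{B}$, but only solvability of the equation is needed, and that is clear.) Where you genuinely diverge is the Whitehead-graph equivalence. The paper proves (i)$\Rightarrow$(iii) directly: starting from the regular multigraph $\Lambda$ produced by ($2'$), regularity gives $\sum_a m_a=\sum_b m_b$, and after replacing every edge by $n$ parallel copies one may assume $\sum am_a+\sum bm_b+\sum qm_q\equiv 0$ mod $n$; this congruence is exactly what is needed to concatenate $m_a$ subwords $x_jx_{j+a}^{-1}$, $m_b$ subwords $x_j^{-1}x_{j+b}$ and $m_q$ subwords $x_jx_{j+q}$ into one cyclic word $w$, so that $\Lambda$ is the Whitehead graph of $P_n(w)$. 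You instead prove (ii)$\Rightarrow$(iii) by a closed-walk (Eulerian-type) argument in the digraph $D$ of admissible length-two subwords; because your arcs carry both offsets $\pm a$, $\pm b$ and both $\pm$ signs for $\mathcal{Q}$, no congruence has to be arranged, which is a real simplification, at the price of a connectivity question about $D$. Your parity observation (equal numbers of $+-$ and $-+$ sign changes in a cyclic word) is also a clean substitute for the paper's route to the dichotomy in (iii)$\Rightarrow$(i)/(ii).

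The one step you cannot leave as ``one checks'' is the claim that the strong component of $x_0^{+}$ in $D$ contains an arc of every required offset: this is the crux of your construction. It is true, but it needs the following argument (or something like it). By $\theta$-equivariance of $D$, the set $R$ of displacements $d$ for which there is a directed walk from $x_0^{+}$ to $x_d^{+}$ is closed under addition, hence is a nonempty subsemigroup of $\Z_n$ and therefore a subgroup. If $\mathcal{A},\mathcal{B}\neq\emptyset$, then for any $a\in\mathcal{A}$, $b\in\mathcal{B}$ the excursion $x_0^{+}\to x_a^{-}\to x_{a+b}^{+}$ shows $a+b\in R$, so $-(a+b)\in R$ and the excursion closes up to a directed circuit through $x_0^{+}$ using an $a$-arc and a $b$-arc; likewise each $q\in\mathcal{Q}$ lies in $R$, so the arc $x_0^{+}\to x_q^{+}$ closes up. Hence one strong component carries at least one arc from every edge orbit, and concatenating these circuits (you do not need to cover \emph{all} arcs of the component, one arc per orbit suffices) gives the defining word $w$, cyclically reduced when $0\notin\mathcal{A}\cup\mathcal{B}$. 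With that lemma supplied, your proof is complete and is a legitimate alternative to the paper's multiplicity-and-congruence construction.
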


\begin{proof}
In each case the `resp.' statements are obvious, so we pay no attention to them in what follows.

(a)(i) $\Rightarrow$ (a)(ii). Let $v,v'$ be vertices of $\Gamma$ from the two orbits of $\theta$. Since the automorphism $\theta$ of condition (5) acts as a regular permutation, for distinct $i,j$ ($0\leq i,j\leq n-1$) we have $\theta^i(v)\neq \theta^j(v)$ and $\theta^i(v')\neq \theta^j(v')$. Thus, for each $0\leq i\leq n-1$ setting $v_i=\theta^i(v)$, $v_i'=\theta^i(v')$ defines $2n$ distinct vertices. The set of neighbours of vertex $v$ is the set $N(v)=\{v_a\ |\ a\in \mathcal{A}\} \cup \{v_q'\ |\ q\in \mathcal{Q}\}$ for some $\mathcal{A}, \mathcal{Q}\subseteq \{0,\ldots ,n-1\}$. Similarly the set of neighbours of vertex $v'$ is the set $N(v')=\{v_b'\ |\ b\in \mathcal{B}\} \cup \{v_{n-q}\ |\ q\in \mathcal{Q}\}$ for some $\mathcal{B}\subseteq \{0,\ldots ,n-1\}$. Thus, by ($5$), for each $0\leq i\leq n-1$ the set of neighbours $N(v_i)=\{v_{i+a}\ |\ a\in \mathcal{A}\} \cup \{v_{i+q}'\ |\ q\in \mathcal{Q}\}$ and $N(v_i')=\{v_{i+b}'\ |\ b\in \mathcal{B}\} \cup \{v_{i-q}\ |\ q\in \mathcal{Q}\}$ (subscripts mod~$n$). That is, $\Gamma$ is isomorphic to $\Gamma_n(\mathcal{A},\mathcal{B},\mathcal{Q})$.

(a)(ii) $\Rightarrow$ (a)(i). Letting $\theta$ be the automorphism given by $\theta(v_i)=v_{i+1}$, $\theta(v'_i)=v'_{i+1}$ (subscripts mod~$n$) we see that ($5$) holds while $(3'(b))$, $(4)$ clearly hold.

(b)(i) $\Rightarrow$ (b)(ii). By part~(a) the graph $\Gamma$ is isomorphic to $\Gamma_n(\mathcal{A},\mathcal{B},\mathcal{Q})$ for some $\mathcal{A},\mathcal{B},\mathcal{Q}\subseteq \{0,\ldots, n-1\}$. If $\mathcal{A}=\emptyset$ and $\mathcal{B} \neq \emptyset$ then any attempt to make $\Gamma=\Gamma_n(\mathcal{A},\mathcal{B},\mathcal{Q})$ regular by the process in condition ($2'$) will fail: vertices $v_i$ are only adjacent to vertices $v_j'$ and each edge at a vertex $v_i$ is incident at a vertex $v_j'$, while there exist edges $(v_j',v_{j+b}')$, ensuring that the degree of $v_j'$ will strictly exceed that of $v_i$. A similar argument applies if $\mathcal{A}\neq \emptyset$ and $\mathcal{B}= \emptyset$.

(b)(ii) $\Rightarrow$ (b)(i). By part~(a) the conditions ($3'$(b)),($4$),($5$) hold, so it remains to show that ($2'$) holds. If $|\mathcal{A}|=|\mathcal{B}|$ then $\Gamma_n(\mathcal{A},\mathcal{B},\mathcal{Q})$ is regular so ($2'$) holds. If  $|\mathcal{A}|>|\mathcal{B}|>0$ then, for some fixed $b\in\mathcal{B}$ replacing the edge joining vertices $v'_i,v'_{i+b}$ by $|\mathcal{A}|-|\mathcal{B}|+1$ edges joining these vertices (for each $0\leq i\leq n-1$) produces a regular graph, obtained by the manner described in ($2'$). A similar argument applies if $|\mathcal{B}|>|\mathcal{A}|>0$.

(b)(i) $\Rightarrow$ (b)(iii) By part~(a) the graph $\Gamma$ is isomorphic to $\Gamma_n(\mathcal{A},\mathcal{B},\mathcal{Q})$ for some $\mathcal{A},\mathcal{B},\mathcal{Q}\subseteq \{0,\ldots, n-1\}$. Consider a regular graph $\Lambda$ obtained from $\Gamma=\Gamma_n(\mathcal{A},\mathcal{B},\mathcal{Q})$ in the manner described in ($2'$). Then $\Lambda$ has vertices $v_i,v'_i$ and edges $(v_i,v_{i+a})$ of multiplicity $m_a\geq 1$, $(v'_i,v'_{i+b})$ of multiplicity $m_b\geq 1$, and $(v_i,v'_{i+q})$ of multiplicity $m_q\geq 1$ (say) for each $a\in\mathcal{A}$, $b\in\mathcal{B}$, $q\in\mathcal{Q}$ ($0\leq i\leq n-1$). Then the degree of any vertex $v_i$ is $2\sum_{a\in\mathcal{A}} m_a +\sum_{q\in\mathcal{Q}} m_q$ and the degree of any vertex $v'_i$ is $2\sum_{b\in\mathcal{B}} m_b +\sum_{q\in\mathcal{Q}} m_q$ and so, since $\Lambda$ is regular, we have $\sum_{a\in\mathcal{A}} m_a = \sum_{b\in\mathcal{B}} m_b$. By replacing each edge of $\Lambda$ by $n$ edges, if necessary, we may assume that $\sum_{a\in\mathcal{A}} am_a +\sum_{b\in\mathcal{B}} bm_b + \sum_{q\in\mathcal{Q}} qm_q\equiv 0$~mod~$n$. This ensures that there exists a word $w$ in generators $x_0,\ldots ,x_{n-1}$ with $m_a$ cyclic subwords of the form $x_jx_{j+a}^{-1}$, $m_b$ cyclic subwords of the form $x_j^{-1}x_{j+b}$, and $m_q$ cyclic subwords of the form $x_jx_{j+q}$ for each $a\in\mathcal{A}$, $b\in\mathcal{B}$, $q\in\mathcal{Q}$. Then setting $v_i=v_{x_i},v'_i=v'_{x_i}$ we see that $\Lambda$ is isomorphic to the Whitehead graph of the cyclic presentation $P_n(w)$.

(b)(iii) $\Rightarrow$ (b)(i). Suppose that $\Gamma$ is the reduced Whitehead graph of a cyclic presentation $P_n(w)$. Then, as described in the introduction, the Whitehead graph $\Lambda$ of $P_n(w)$ satisfies ($2$),($3$(b)),($4$),($5$). Thus $\Gamma$ satisfies ($3'$(b)),($4$),($5$), and since the regular graph $\Lambda$ can be obtained from $\Gamma$ in the manner described in~($2'$) the graph $\Gamma$ satisfies ($2'$)
\end{proof}

The \em circulant graph \em $\mathrm{circ}_n(\mathcal{S})$ (where $\mathcal{S}\subseteq \{ 0,1,\dots , n-1\}$) is the graph with vertex set $\{ u_0,\ldots ,u_{n-1} \}$ and edge set $\{ (u_i, u_{i+s})\ |\ 0\leq i\leq n-1, s\in \mathcal{S}\}$ (subscripts mod~$n$). Since $(u_i,u_{i+s})$ is an edge of $\mathrm{circ}_n(S)$ if and only if $(u_i,u_{i-s})$ is an edge, the graph $\mathrm{circ}_n(\mathcal{S})$ is said to be \em properly given \em if $s\not \equiv \pm t$~mod~$n$ for all $s,t\in \mathcal{S}$, $s\neq t$~\cite{Heuberger}. Note that the induced subgraphs of $\Gamma_n(\mathcal{A},\mathcal{B},\mathcal{Q})$ with vertex set $\{v_0,\ldots ,v_{n-1}\}$ and with vertex set $\{v_0',\ldots ,v_{n-1}'\}$ are the circulant graphs $\mathrm{circ}_n(\mathcal{A})$, $\mathrm{circ}_n(\mathcal{B})$, respectively. In particular, if $\mathcal{Q}=\emptyset$ then $\Gamma_n(\mathcal{A},\mathcal{B},\mathcal{Q})$ is the disjoint union of $\mathrm{circ}_n(\mathcal{A})$ and $\mathrm{circ}_n(\mathcal{B})$. The connected components of a circulant graph are described by the following result.

\begin{lemma}[\cite{BoeschTindell}]\label{lem:connectedcirc}
Let $\mathcal{S}\subseteq \{0,1,\ldots ,n-1\}$ and set $d=\mathrm{gcd}(n, s\ (s\in S))$. Then $\mathrm{circ}_n(\mathcal{S})$ consists of $d$ connected components, each of which is isomorphic to $\mathrm{circ}_{n/d}(\mathcal{S}/d)$ where $\mathcal{S}/d=\{ s/d\ |\ s\in \mathcal{S}\}$. In particular, $\mathrm{circ}_n(\mathcal{S})$ is connected if and only if $d=1$.
\end{lemma}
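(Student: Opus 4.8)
\textbf{Proof proposal for Lemma~\ref{lem:connectedcirc}.}

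The plan is to analyse connectivity of $\mathrm{circ}_n(\mathcal{S})$ via the group structure on the vertex set $\Z/n\Z$. Identify vertex $u_i$ with $i\in\Z/n\Z$, so that an edge joins $i$ and $j$ precisely when $i-j\equiv \pm s$~mod~$n$ for some $s\in\mathcal{S}$. The key observation is that the connected component containing the vertex $0$ consists of exactly those $j\in\Z/n\Z$ reachable by a walk, i.e. those $j$ expressible as a sum $\sum_k \varepsilon_k s_k$ with $s_k\in\mathcal{S}$ and $\varepsilon_k\in\{\pm1\}$; this set is precisely the subgroup $H=\langle \mathcal{S}\rangle$ of $\Z/n\Z$ generated by $\mathcal{S}$. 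Since $\Z/n\Z$ is cyclic, $H=\langle d\rangle=\{0,d,2d,\ldots,n-d\}$ where $d=\gcd(n,\{s:s\in\mathcal{S}\})$, so $|H|=n/d$.

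First I would check that each connected component is a coset of $H$: if $j$ lies in the component of $i$, then $j-i\in H$, and conversely translation by any element of $H$ is realised by a walk, so the components are exactly the cosets $i+H$. There are $[\Z/n\Z : H]=d$ such cosets, giving $d$ components. Next I would exhibit the isomorphism of each component with $\mathrm{circ}_{n/d}(\mathcal{S}/d)$: the map $kd\mapsto k$ is a bijection $H\to\Z/(n/d)\Z$, and it sends an edge $(kd,kd\pm s)$ (which requires $d\mid s$, always true) to $(k, k\pm s/d)$, so it is a graph isomorphism from the component of $0$ onto $\mathrm{circ}_{n/d}(\mathcal{S}/d)$; composing with a translation handles the other components. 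Finally, $\mathrm{circ}_n(\mathcal{S})$ is connected iff there is one component iff $d=1$, which is the last sentence.

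I do not expect any serious obstacle here: the argument is the standard Cayley-graph-on-a-cyclic-group computation, and the only mild care needed is the bookkeeping that $d\mid s$ for every $s\in\mathcal{S}$ so that $\mathcal{S}/d$ is well defined and the edge correspondence is exact. Since the lemma is quoted from \cite{BoeschTindell}, a short self-contained paragraph along these lines suffices rather than a detailed grind.
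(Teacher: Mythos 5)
Your argument is correct. Note that the paper offers no proof of this lemma at all -- it is quoted from the reference [Boesch--Tindell] -- so there is no internal argument to compare against; your self-contained Cayley-graph computation (the components are the cosets of the subgroup $\langle \mathcal{S}\rangle=\langle d\rangle\leq \Z/n\Z$, and $kd\mapsto k$ carries the component of $0$ isomorphically onto $\mathrm{circ}_{n/d}(\mathcal{S}/d)$, translations handling the rest) is the standard proof, and you correctly flag the one bookkeeping point, namely that $d\mid s$ for every $s\in\mathcal{S}$ so that $\mathcal{S}/d$ and the edge correspondence are well defined.
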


In a similar way, for $\mathcal{Q}\neq \emptyset$, the connected components of $\Gamma_n(\mathcal{A},\mathcal{B},\mathcal{Q})$ are described by the following. (When $\mathcal{Q}=\emptyset$, the connected components of $\Gamma_n(\mathcal{A},\mathcal{B},\mathcal{Q})$ follow immediately from Lemma~\ref{lem:connectedcirc}.)

\begin{lemma}\label{lem:connectedGamma}
Suppose $\mathcal{Q}\neq\emptyset$ and fix some $q_0\in\mathcal{Q}$ and set $d=\mathrm{gcd}(n,a\ (a\in \mathcal{A}), b\ (b\in \mathcal{B}), q-q_0\  (q\in \mathcal{Q}))$. Then $\Gamma=\Gamma_n(\mathcal{A},\mathcal{B},\mathcal{Q})$ consists of $d$ connected components, each of which is isomorphic to $\Gamma_{n/d}(\mathcal{A}/d,\mathcal{B}/d,\mathcal{Q}/d)$ where $\mathcal{A}/d=\{a/d\ |\ a\in \mathcal{A}\}$, $\mathcal{B}/d=\{b/d\ |\ b\in \mathcal{B}\}$, $\mathcal{Q}/d=\{q_0+(q-q_0)/d\ |\ q\in \mathcal{Q}\}$. In particular, $\Gamma$ is connected if and only if $d=1$.
\end{lemma}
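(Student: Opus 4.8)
\textbf{Proof proposal for Lemma~\ref{lem:connectedGamma}.}

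The plan is to reduce the connectivity structure of $\Gamma=\Gamma_n(\mathcal{A},\mathcal{B},\mathcal{Q})$ to that of an auxiliary circulant graph so that Lemma~\ref{lem:connectedcirc} can be applied. The natural device is to ``fold'' the two $\theta$-orbits into a single cyclic structure of length $2n$. Concretely, I would introduce vertices $u_0,\ldots,u_{2n-1}$ and identify $v_i$ with $u_{2i}$ and $v_i'$ with $u_{2i+1}$. Under this identification an edge $(v_i,v_{i+a})$ becomes an edge $(u_{2i},u_{2i+2a})$, an edge $(v_i',v_{i+b}')$ becomes $(u_{2i+1},u_{2i+2b+1})$, and an edge $(v_i,v_{i+q}')$ becomes $(u_{2i},u_{2i+2q+1})$. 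The first obstacle is that this is not literally a circulant graph on $2n$ vertices, because the ``$\mathcal{A}$-type'' and ``$\mathcal{B}$-type'' edges only connect even-to-even and odd-to-odd vertices respectively; only the $\mathcal{Q}$-type edges cross parity. So a direct appeal to Lemma~\ref{lem:connectedcirc} is not available, and I would instead argue combinatorially about which vertices lie in the same component.

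The key step is a direct analysis of the equivalence relation ``lies in the same connected component as''. Fix $q_0\in\mathcal{Q}$. Starting from $v_0$, the reachable vertices are obtained by repeatedly: (i) adding any $\pm a$ ($a\in\mathcal{A}$) while staying among the $v$'s; (ii) adding any $\pm b$ ($b\in\mathcal{B}$) while staying among the $v'$'s; (iii) passing between a $v_i$ and a $v_{i+q}'$ for any $q\in\mathcal{Q}$. Composing two $\mathcal{Q}$-moves (out along $q$, back along $q'$) produces a net shift of $q-q'$ within the $v$'s, and since $q_0$ is fixed every $q-q_0$ is obtained this way; similarly within the $v'$'s. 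Hence the set of $v$-indices reachable from $v_0$ is precisely the subgroup of $\Z/n\Z$ generated by $\mathcal{A}\cup\mathcal{B}\cup\{q-q_0 : q\in\mathcal{Q}\}$, which is the cyclic subgroup $d\Z/n\Z$ with $d=\gcd(n, a\ (a\in\mathcal{A}), b\ (b\in\mathcal{B}), q-q_0\ (q\in\mathcal{Q}))$ as defined in the statement. I would prove both inclusions: every listed move changes the $v$-index (or the $v'$-index, after a $\mathcal{Q}$-move back) by an element of this subgroup, so the component of $v_0$ is contained in $\{v_i : i\equiv 0 \bmod d\}\cup\{v_j' : j\equiv q_0 \bmod d\}$; conversely each generator $d$ of the subgroup is realized by an explicit short path, so equality holds. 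It is here that one must check the interaction of the $\mathcal{Q}$-shift with the labelling $\mathcal{Q}/d=\{q_0+(q-q_0)/d\}$ is exactly right — this bookkeeping, rather than anything deep, is the main place an error could creep in.

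With the component of $v_0$ pinned down, the remaining steps are routine. The $n/d$ distinct residues modulo $d$ partition all $2n$ vertices into $d$ sets of equal size, each of which is a single component by the translation symmetry $\theta$ (applying $\theta^k$ carries the component of $v_0$ to the component of $v_k$, and these coincide iff $d\mid k$). Finally, I would exhibit the isomorphism between the component containing $v_0$ and $\Gamma_{n/d}(\mathcal{A}/d,\mathcal{B}/d,\mathcal{Q}/d)$ by the map $v_{di}\mapsto v_i$, $v_{di+q_0}'\mapsto v_i'$ (indices mod $n/d$): an $\mathcal{A}$-edge $(v_{di},v_{di+a})$ with $a\in\mathcal{A}$ maps to $(v_i,v_{i+a/d})$ with $a/d\in\mathcal{A}/d$, a $\mathcal{B}$-edge similarly, and a $\mathcal{Q}$-edge $(v_{di},v_{di+q}')$ maps to $(v_i, v_{i+(q-q_0)/d}')$, which is the edge of $\Gamma_{n/d}(\mathcal{A}/d,\mathcal{B}/d,\mathcal{Q}/d)$ associated to $q_0+(q-q_0)/d\in\mathcal{Q}/d$ after shifting the second index by $q_0$; one checks this is a bijection on vertices and edges. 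The ``in particular'' clause is then immediate: $\Gamma$ is connected iff $d=1$. I expect the only genuine subtlety to be verifying that the normalization $q_0+(q-q_0)/d$ in the definition of $\mathcal{Q}/d$ is forced by the choice to send $v_{di+q_0}'$ (rather than $v_{di}'$) to $v_i'$, and I would state this identification carefully before checking edges.
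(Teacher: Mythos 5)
Your argument is correct, but it runs in the opposite direction to the paper's. The paper works bottom-up: it starts from $d$ disjoint copies of $\Gamma_{n/d}(\mathcal{A}/d,\mathcal{B}/d,\mathcal{Q}/d)$, shows each copy is connected by first relabelling $y_j'=x_{j+q_0}'$ (the same $q_0$-normalization you flag) and then contracting the edges $(y_j,y_j')$ to get the circulant $\mathrm{circ}_{n/d}\bigl(\mathcal{A}/d\cup\mathcal{B}/d\cup\{(q-q_0)/d\ (q\in\mathcal{Q})\}\bigr)$, whose connectivity follows from Lemma~\ref{lem:connectedcirc}, and finally re-indexes the disjoint union by residues mod~$d$ and sets $v_i=w_i$, $v_i'=w_{i-q_0}'$ to recover $\Gamma_n(\mathcal{A},\mathcal{B},\mathcal{Q})$. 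You instead work top-down inside $\Gamma$ itself: you identify the component of $v_0$ as $\{v_i:i\equiv 0\ \mathrm{mod}\ d\}\cup\{v_j':j\equiv q_0\ \mathrm{mod}\ d\}$ by a reachability/subgroup argument (the reachable $v$-indices form the subgroup generated by $\mathcal{A}\cup\mathcal{B}\cup\{q-q_0\}$, i.e.\ $d\Z/n\Z$), use $\theta$-translation to get exactly $d$ components, and then write down the isomorphism $v_{di}\mapsto v_i$, $v_{di+q_0}'\mapsto v_{i+q_0}'$ with the target, which is where the normalization $q_0+(q-q_0)/d$ in $\mathcal{Q}/d$ is forced -- your bookkeeping here is right. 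Your route is self-contained (it does not invoke Lemma~\ref{lem:connectedcirc}, essentially reproving the relevant special case via the subgroup argument) and makes the components' location inside $\Gamma$ explicit, whereas the paper's route is shorter given the circulant lemma and reuses its relabel-and-contract technique. Two cosmetic points: the opening ``fold into $2n$ vertices'' idea is a dead end you rightly abandon and could be deleted, and ``the $n/d$ distinct residues modulo $d$'' should read ``the $d$ residues modulo $d$'' (giving $d$ classes of $2n/d$ vertices each); neither affects correctness.
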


\begin{proof}
Let $\Lambda_0=\Gamma_{n/d}(\mathcal{A}/d,\mathcal{B}/d,\mathcal{Q}/d)$. Then $\Lambda_0$ has vertices $x_j,x_j'$ and edges  $(x_j,x_{j+a/d})$, $(x_j',x_{j+b/d}')$, $(x_j,x_{j+q_0+(q-q_0)/d}')$ $a\in \mathcal{A}, b\in \mathcal{B}, q\in \mathcal{Q}$, $0\leq j\leq n/d-1$. For each $0\leq j\leq n/d-1$ let $y_j=x_j$, $y_j'=x_{j+q_0}'$. Then $\Lambda_0$ has vertices $y_j,y_j'$ and edges  $(y_j,y_{j+a/d})$, $(y_j',y_{j+b/d}')$, $(y_j,y_{j+(q-q_0)/d}')$ $a\in \mathcal{A}, b\in \mathcal{B}, q\in \mathcal{Q}$, $0\leq j\leq n/d-1$. Contracting each edge $(y_j,y_j')$ contracts $\Lambda_0$ to the circulant graph $\mathrm{circ}_n(\mathcal{A}/d\cup \mathcal{B}/d \cup \{ (q-q_0)/d\ (q\in \mathcal{Q}) \})$, which is connected by Lemma~\ref{lem:connectedcirc}, and thus $\Lambda_0$ is connected.

For each $j$ set $w_{dj}=y_j$, $w_{dj}'=y_j'$, so that $\Lambda_0$ has vertices $w_0,w_d,\ldots ,w_{n-d}$, $w_0',w_d',\ldots ,w_{n-d}'$ and edges $(w_i,w_{i+a})$, $(w_i',w_{i+b}')$, $(w_i,w_{i+q-q_0}')$, where $i=dj$. Now for each $1\leq \alpha \leq d-1$ let $\Lambda_\alpha$ be an isomorphic copy of $\Lambda_0$ with vertices $w_\alpha,w_{\alpha+d},\ldots ,w_{\alpha+n-d}$, $w_\alpha',w_{\alpha+d}',\ldots ,w_{\alpha+n-d}'$ and edges $(w_i,w_{i+a})$, $(w_i',w_{i+b}')$, $(w_i,w_{i+q-q_0}')$, where $i=dj+\alpha$. Then the disjoint union $\Lambda$ of $\Lambda_0,\ldots ,\Lambda_{d-1}$ has vertices $w_0,\ldots,w_{n-1}$, $w_0',\ldots,w_{n-1}'$ and edges $(w_i,w_{i+a})$, $(w_i',w_{i+b}')$, $(w_i,w_{i+q-q_0}')$ $a\in \mathcal{A}, b\in \mathcal{B}, q\in \mathcal{Q}$, $0\leq i\leq n-1$. Now for each $i$, setting $v_i=w_i$ and $v_i'=w_{i-q_0}'$ shows that $\Lambda$ is isomorphic to $\Gamma_n(\mathcal{A},\mathcal{B},\mathcal{Q})$, and the proof is complete.
\end{proof}

Thus, by Lemmas~\ref{lem:3457isGamma} and \ref{lem:connectedGamma}, in studying graphs that satisfy ($3'$),($4$),($5$),($7$), we may assume that the graphs are connected, as we do in Theorem~\ref{mainthm:planarWhiteheadgraphs} and Corollary~\ref{maincor:allconditions}.

Heuberger~\cite{Heuberger} classified the planar circulant graphs, and this classification will be a key tool in our proofs. Since the inclusion of $0$ in $S$ clearly has no bearing on planarity, we may assume that $0\not \in S$.

\begin{theorem}[{\cite[Theorem~23]{Heuberger}}]\label{thm:planarcirculant}
Let $\mathrm{circ}_n(\mathcal{S})$ be a connected, properly given, circulant graph where $\mathcal{S}\subseteq \{1,\ldots , n-1\}$. Then $\mathrm{circ}_n(\mathcal{S})$ is planar if and only if one of the following holds:
\begin{itemize}
  \item[(i)] $\mathcal{S}=\{s\}$, for some $s$;
  \item[(ii)] $\mathcal{S}=\{s,\pm 2s\}$, for some $s$ and $n$ is even;
  \item[(iii)] $\mathcal{S}=\{s,n/2\}$, where $2|s$ and $n\equiv 2$~mod~$4$.
\end{itemize}
\end{theorem}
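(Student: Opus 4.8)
\emph{Proof proposal.} The plan is to prove the two implications separately, the ``only if'' direction being much the harder.

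For ``if'', I would exhibit explicit planar embeddings of the three families. In case~(i), connectedness forces $\gcd(n,s)=1$, so $\mathrm{circ}_n(\{s\})$ is a single $n$-cycle. In case~(ii), connectedness again gives $\gcd(n,s)=1$, so multiplying vertex labels by $s^{-1}$ identifies the graph with the squared cycle $\mathrm{circ}_n(\{1,2\})$; drawing $C_n$ as a convex polygon, the distance-$2$ chords split, according to the parity of their lower endpoint, into two families, each forming a non-crossing sub-polygon, one placed inside and one outside $C_n$ --- the two families being consistent with a planar drawing precisely because $n$ is even. In case~(iii), with $n\equiv 2\bmod 4$ and $s$ even, the $s$-edges decompose the graph into an ``even'' and an ``odd'' $(n/2)$-cycle, while the $n/2$-edges form a perfect matching between the two; a short computation shows this matching is a rotation, so the graph is the prism over $C_{n/2}$, hence planar.

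For ``only if'', let $\Gamma=\mathrm{circ}_n(\mathcal{S})$ be connected, properly given and planar. Since $\Gamma$ is regular of degree $2|\mathcal{S}|$, or $2|\mathcal{S}|-1$ when $n/2\in\mathcal{S}$, Euler's inequality $|E|\le 3|V|-6$ forces $|\mathcal{S}|\le 2$ (for $n\ge 5$; smaller $n$ are handled directly), save for the possibility $|\mathcal{S}|=3$ with $n/2\in\mathcal{S}$; in that case the degree-$4$ circulant subgraph $\mathrm{circ}_n(\{s,t\})$ would also be planar, hence each of its components is of type~(ii) by the $|\mathcal{S}|=2$ analysis below and so is a squared even cycle $C_m^2$, and a short argument with the $n/2$-matching forces there to be only one component, whence $\Gamma\cong\mathrm{circ}_n(\{1,2,n/2\})$, which contains the non-planar Möbius ladder $\mathrm{circ}_n(\{1,n/2\})$ --- a contradiction. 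For $|\mathcal{S}|=1$ we are in case~(i). For $|\mathcal{S}|=2$, write $\mathcal{S}=\{s,t\}$ and split on whether $n/2\in\mathcal{S}$. If $n/2\in\mathcal{S}$, then $\Gamma$ is cubic, and analysing $\gcd(s,n)$ (which connectedness forces to be $1$ or $2$) shows that $\Gamma$ is the prism when $s$ is even and $n\equiv 2\bmod 4$ (case~(iii)), is $K_4$ when $n=4$ (case~(ii)), and otherwise is the Möbius ladder $M_n$ with $n\ge 6$, which is non-planar. If $n/2\notin\mathcal{S}$, then $\Gamma$ is $4$-regular (so $n\ge 6$ by Euler), and if one of $s,t$ is coprime to $n$ I would relabel, using $\mathrm{circ}_n(\{1,t\})=\mathrm{circ}_n(\{1,n-t\})$, so that $\mathcal{S}=\{1,t\}$ with $2\le t<n/2$: the $1$-edges form a Hamiltonian cycle, and the conflict (circle) graph of the $t$-chords works out to be $\mathrm{circ}_n(\{1,2,\dots,t-1\})$, so --- using that a graph with a prescribed Hamiltonian cycle is planar iff the conflict graph of its chords is bipartite --- planarity is equivalent to $\mathrm{circ}_n(\{1,\dots,t-1\})$ being bipartite, i.e.\ to $t=2$ and $n$ even, i.e.\ to case~(ii). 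Finally, if neither $s$ nor $t$ is coprime to $n$, put $d_1=\gcd(s,n)$, $d_2=\gcd(t,n)$; connectedness gives $\gcd(d_1,d_2)=1$, and a short case check shows $\Gamma$ is triangle-free unless $n/d_1=3$ or $n/d_2=3$, in which case a Chinese-remainder identification presents $\Gamma$ as a Cartesian product $C_d\,\square\,C_3$ with $d\ge 3$; either way $\Gamma$ has girth $\ge 4$, so $2|E|\ge 4|F|$ combined with Euler's formula gives $|E|\le 2|V|-4$, contradicting $4$-regularity.

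The hard part is this final case: showing that a connected, properly given, $4$-regular circulant graph $\mathrm{circ}_n(\{s,t\})$ with neither generator coprime to $n$ is never planar. Here one cannot reduce to a single Hamiltonian cycle, so the clean conflict-graph criterion is unavailable, and one must instead extract enough structure --- triangle-freeness, or a product description via the Chinese Remainder Theorem --- to run a face-counting argument, and then carefully dispose of the boundary sub-cases where short cycles genuinely appear (when $3s\equiv 0$ or $3t\equiv 0\bmod n$, or for the smallest $n$). Checking that ``properly given'' together with ``connected'' really does kill every apparent exception in this bookkeeping is where most of the work lies; the remaining cases, and the ``if'' direction, are comparatively routine.
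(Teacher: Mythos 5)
You are supplying a proof where the paper gives none (the result is simply quoted from Heuberger), and most of your outline is sound: the Euler bound reducing to $|\mathcal{S}|\le 2$ together with the $n/2$-exception, the prism/M\"obius/$K_4$ trichotomy in the cubic case, and the Hamiltonian-cycle-plus-conflict-graph computation (the overlap graph of the $t$-chords really is $\mathrm{circ}_n(\{1,\dots,t-1\})$, bipartite precisely when $t=2$ and $n$ is even) correctly isolate cases (i)--(iii).

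The genuine gap is in the final sub-case, and it is exactly the one you flag as ``where most of the work lies''. When neither generator is coprime to $n$ and, say, $3s\equiv 0 \bmod n$, your Chinese-remainder identification gives $\Gamma\cong C_3\,\square\,C_m$ with $m=n/3\ge 4$; but this graph has girth $3$ (every $C_3$-fibre is a triangle), so the assertion ``either way $\Gamma$ has girth $\ge 4$'' is false there and the bound $|E|\le 2|V|-4$ is unavailable. A crude Euler count allowing triangular faces does not rescue it either: with $|V|=3m$, $|E|=6m$ one only gets that a planar embedding would need at least $8$ triangular faces, and the graph genuinely contains $m\ge 8$ triangles in general, so no contradiction follows. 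You need a separate argument for non-planarity of $C_3\,\square\,C_m$, e.g.\ contract each $C_m$-fibre to obtain a $C_3\,\square\,C_3$ minor and kill that by noting it has only six triangles while a planar embedding of a $4$-regular graph on $9$ vertices would require at least eight triangular faces (or exhibit a $K_{3,3}$ minor directly). Two smaller points should also be made explicit rather than waved at: in this sub-case the other potential triangle relations $t\equiv\pm 2s$, $s\equiv\pm 2t$ cannot occur because $\gcd(s,n)$ would then divide $\gcd(t,n)$, contradicting their coprimality (this is what makes ``triangle-free unless $3s\equiv 0$ or $3t\equiv 0$'' true); and in the $|\mathcal{S}|=3$, $n/2\in\mathcal{S}$ case the components of $\mathrm{circ}_n(\{s,t\})$ cannot be of type (iii), since properly-given-ness excludes $s,t\equiv\pm n/2$ when there is one component and the two-component case forces components of odd order, which then clashes with the evenness required by type (ii) — that parity clash, rather than a matching argument, is the cleanest way to finish that case.
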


Note that for $n\geq 5$, in Theorem~\ref{thm:planarcirculant}(ii) we have $n/2\not \in S$, for otherwise either $s=n/2$, in which case $2s\equiv 0$~mod~$n$, so $S$ is not a subset of $\{1,\ldots ,n-1\}$, or $2s=n/2$ so the fact that $\mathrm{circ}_n(S)$ is connected implies that $(n,\pm n/4,n/2)=1$, i.e.\,$n=4$, a contradiction. Thus for $n\geq 5$ the cases~(ii) and~(iii) of Theorem~\ref{thm:planarcirculant} are distinct. Since the subgraphs $\mathrm{circ}_n(\mathcal{A}),\mathrm{circ}_n(\mathcal{B})$ of our (connected) graph $\Gamma_n(\mathcal{A},\mathcal{B},\mathcal{Q})$ may be disconnected, it is useful to remove the hypothesis that $\mathrm{circ}_n(S)$ is connected from Theorem~\ref{thm:planarcirculant}. Using Lemma~\ref{lem:connectedcirc} to do this we have:

\begin{corollary}
\label{cor:planarcirculant}
Let $\mathcal{S}\subseteq \{1,\ldots , n-1\}$ be properly given. Then $\mathrm{circ}_n(S)$ is planar if and only if one of the following holds:
\begin{itemize}
  \item[(i)] $\mathcal{S}=\emptyset$ or $\mathcal{S}=\{s\}$, for some $s$;
  \item[(ii)] $\mathcal{S}=\{s,\pm 2s\}$, for some $s$ and $n/d$ is even, $d=(n,s)$;
  \item[(iii)] $\mathcal{S}=\{s,n/2\}$, where $2d|s$ and $n/d\equiv 2$~mod~$4$, $d=(n/2,s)$.
\end{itemize}
\end{corollary}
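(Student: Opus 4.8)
The plan is to reduce to Heuberger's theorem for \emph{connected} circulant graphs (Theorem~\ref{thm:planarcirculant}) by splitting $\mathrm{circ}_n(\mathcal{S})$ into its components via Lemma~\ref{lem:connectedcirc}, and then to translate the three resulting cases back across the reduction using elementary $\gcd$ identities. First I would dispose of $\mathcal{S}=\emptyset$ separately: then $\mathrm{circ}_n(\emptyset)$ is a discrete graph, hence planar, and this is case~(i). So assume $\mathcal{S}\neq\emptyset$ and put $d=\gcd(n,\,s\ (s\in\mathcal{S}))$, $m=n/d$, $\mathcal{T}=\mathcal{S}/d$; since $d$ divides each $s\in\mathcal{S}$ and each such $s$ satisfies $d\leq s\leq n-d$, we have $\mathcal{T}\subseteq\{1,\ldots,m-1\}$. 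By Lemma~\ref{lem:connectedcirc}, $\mathrm{circ}_n(\mathcal{S})$ is a disjoint union of $d$ copies of $\mathrm{circ}_m(\mathcal{T})$, which is connected because $\gcd(m,\,t\ (t\in\mathcal{T}))=1$. Since a disjoint union of graphs is planar if and only if each summand is planar, $\mathrm{circ}_n(\mathcal{S})$ is planar iff $\mathrm{circ}_m(\mathcal{T})$ is. I would then note that $\mathcal{T}$ is again properly given: division by $d$ is injective on multiples of $d$, and $t_1\equiv\pm t_2\pmod m$ would give $s_1\equiv\pm s_2\pmod n$ after multiplying by $d$, contradicting that $\mathcal{S}$ is properly given. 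Hence Theorem~\ref{thm:planarcirculant} applies to $\mathrm{circ}_m(\mathcal{T})$.

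The remaining work is to match up the three cases. If $\mathcal{T}=\{s'\}$ then $\mathcal{S}=\{ds'\}$ is a singleton: case~(i). If $\mathcal{T}=\{s',\pm2s'\}$ with $m$ even, set $s=ds'$; then $\mathcal{S}=\{s,\pm2s\}$, and since $\gcd(n,s,2s)=\gcd(n,s)$ the quantity $d$ equals $(n,s)$ while $m=n/d$ is even: case~(ii). If $\mathcal{T}=\{s',m/2\}$ with $2\mid s'$ and $m\equiv2\pmod4$, then $m$, and hence $n=dm$, is even, $n/2=dm/2$, and $\mathcal{S}=\{s,n/2\}$ for $s=ds'$; using $\gcd(n,n/2)=n/2$ we get $d=\gcd(n,s,n/2)=(n/2,s)$, while $2d\mid s$ (from $2\mid s'$) and $n/d\equiv2\pmod4$: case~(iii). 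Running the same identities backwards shows that, starting from any of cases~(i)--(iii) for $\mathcal{S}$ (with $d$ as named there, which in cases~(ii) and~(iii) indeed coincides with $\gcd(n,\,s\ (s\in\mathcal{S}))$), the reduced set $\mathcal{T}=\mathcal{S}/d$ falls into the corresponding case of Theorem~\ref{thm:planarcirculant} --- possibly degenerating into its case~(i) if $\{s,\pm2s\}$ or $\{s,n/2\}$ happens to collapse to a singleton --- so $\mathrm{circ}_m(\mathcal{T})$, and therefore $\mathrm{circ}_n(\mathcal{S})$, is planar.

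There is no conceptual obstacle here, since Lemma~\ref{lem:connectedcirc} and Theorem~\ref{thm:planarcirculant} carry the weight; the step requiring the most care is purely arithmetic bookkeeping: one must verify that reduction by $d$ respects both the ``properly given'' condition and the passage from congruences mod $n$ to congruences mod $m$, that the $d$ named in cases~(ii) and~(iii) really equals $\gcd(n,\,s\ (s\in\mathcal{S}))$, and that in case~(iii) the hypothesis on $n/d$ forces $n$ to be even, so that $n/2$ and the identity $\gcd(n,n/2)=n/2$ are legitimate.
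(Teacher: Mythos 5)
Your proposal is correct and follows the same route the paper intends: the paper derives the corollary precisely by combining Lemma~\ref{lem:connectedcirc} (decomposition into $d$ isomorphic connected components) with Theorem~\ref{thm:planarcirculant}, and your gcd bookkeeping (that reduction by $d$ preserves the properly given condition, that $d=(n,s)$ resp.\ $d=(n/2,s)$ in cases~(ii),(iii), and the translation of the parity conditions to $n/d$) fills in exactly the details the paper leaves implicit. No gaps.
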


\section{Proof of Theorem~\ref{mainthm:planarWhiteheadgraphs}}\label{sec:mainproof}

We first list certain non-planar graphs $\Gamma_n(\mathcal{A},\mathcal{B},\mathcal{Q})$ that are important for our arguments.

\begin{prop}\label{prop:nonplanargraphs}
The following graphs are non-planar where $n\geq 4$:
\begin{itemize}
  \item[(i)] $\Gamma_n(\emptyset,\emptyset,\{0,1,n-1\})$, where $n$ is odd;
  \item[(ii)] $\Gamma_n(\{1\},\{1\},\{0,1,n-1\})$;
  \item[(iii)] $\Gamma_n(\{1\},\{1\},\{0,2\})$;
  \item[(iv)] $\Gamma_n(\{2\},\{2\},\{0,1\})$, where $n$ is even;
  \item[(v)] $\Gamma_n(\{1,2\},\{1\},\{0\})$, where $n$ is even;
  \item[(vi)] $\Gamma_n(\{2\},\{2\},\{0,n/2\})$, where $n\equiv 2$~mod~$4$;
  \item[(vii)] $\Gamma_n(\{2,n/2\},\{2,n/2\},\{0\})$, where $n\equiv 2$~mod~$4$;
  \item[(viii)] $\Gamma_n(\emptyset,\emptyset,\{0,n/4,n/2,3n/4\})$, where $4|n$;
  \item[(ix)] $\Gamma_n(\emptyset,\emptyset,\{0,n/3,n/2,2n/3\})$, where $6|n$;
  \item[(x)] $\Gamma_n(\emptyset,\emptyset,\{0,n/6,n/2,5n/6\})$, where $6|n$.
\end{itemize}
\end{prop}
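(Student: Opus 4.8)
The plan is to exhibit, in each case, a subgraph that is a subdivision of $K_5$ or $K_{3,3}$, and invoke Kuratowski's theorem. Since each $\Gamma_n(\mathcal{A},\mathcal{B},\mathcal{Q})$ listed has only small connection sets, a fixed finite list of vertices will suffice independently of $n$ (once $n\geq 4$, or $n\geq 6$ where required), so the argument really reduces to checking finitely many explicit small graphs. I would organise the cases so that the purely circulant ones (those with $\mathcal{A}=\mathcal{B}=\emptyset$, namely (i), (viii), (ix), (x)) are handled first: here $\Gamma_n(\emptyset,\emptyset,\mathcal{Q})$ is the bipartite graph on $\{v_i\}\cup\{v_j'\}$ with $v_i\sim v_{i+q}'$ for $q\in\mathcal{Q}$, i.e.\ a bipartite circulant, and for $|\mathcal{Q}|\geq 3$ one expects a $K_{3,3}$-subdivision on the three vertices $v_0,v_1,v_{-1}$ (or similarly spaced vertices) on one side and a suitable triple of primed vertices on the other; for $|\mathcal{Q}|=4$ as in (viii)--(x) one can even hope for a $K_{3,3}$ on small index sets directly. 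The arithmetic conditions ($n$ odd in (i); $4\mid n$ in (viii); $6\mid n$ in (ix),(x)) are exactly what is needed to guarantee that the relevant indices are pairwise distinct mod $n$, so that the claimed subgraph really is a subdivision and not a degenerate multigraph.

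For the mixed cases (ii)--(vii), where $\mathcal{A},\mathcal{B}\neq\emptyset$, I would again pick an explicit small vertex set — typically $v_0,v_1,v_2$ (and $v_{-1}$, $v_3$, $v_4$ as needed) together with $v_0',v_1',v_2'$ — and use the edges $(v_i,v_{i+a})$, $(v_i',v_{i+b}')$, $(v_i,v_{i+q}')$ to connect them. In case (ii), for instance, the three vertices $v_0,v_1$ and one of $v_0',v_1',v_{-1}'$ on one side and $v_1',v_0',v_{2}'$-type vertices on the other, linked partly through the $\mathcal{A}$- and $\mathcal{B}$-edges, should give $K_{3,3}$; cases (iii)--(v) are similar with the role of the step-$2$ edge. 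For (vi) and (vii), where $n/2\in\mathcal{Q}$ or $n/2\in\mathcal{A}\cap\mathcal{B}$, the hypothesis $n\equiv 2\bmod 4$ ensures $n/2$ is odd and distinct from the other connection values, and the vertices $v_0,v_2,v_{n/2}$ together with their primed counterparts support the required minor. In every case I would write down the subdivision explicitly (which vertices are branch vertices, which paths subdivide which edge) and then just remark that the incidences follow from Definition~1.5 and that the branch vertices are distinct by the stated congruence on $n$.

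The main obstacle is bookkeeping rather than mathematics: one must verify in each of the ten cases that the chosen branch vertices and connecting paths are genuinely distinct (no two indices coincide mod $n$) and internally disjoint, and that no two of the subdividing paths share an internal vertex. This is where the conditions $n\geq 4$ (resp.\ $n\geq 6$, or $4\mid n$) and the parity/divisibility hypotheses get used, and where a careless choice produces a "subdivision" that collapses. A secondary subtlety is that for the very smallest admissible $n$ (e.g.\ $n=4$ in (iii),(iv),(v) or $n=6$ in (ix),(x)) some indices that would be distinct for large $n$ coincide, so I would either check those boundary values by hand or choose the branch vertices so that the construction is uniform in $n$. I expect that with a judicious uniform choice of branch vertices in each case — exploiting that all connection constants are $0,1,2$ or $n/2,n/3,n/4$ — the distinctness checks reduce to one or two elementary congruence observations per case, and the proof is then a short case-by-case verification with an accompanying figure.
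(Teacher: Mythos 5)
There is a genuine gap, and it is twofold. First, your proposal never actually exhibits a single Kuratowski subgraph: every case is described with ``should give'', ``one expects'', ``I would write down'', so all of the real content of the proposition is deferred. Second, and more seriously, the organising principle you rely on --- that because the connection sets are small, ``a fixed finite list of vertices will suffice independently of $n$'', so the proof ``reduces to checking finitely many explicit small graphs'' --- is false for cases (i)--(vii). Take (i): $\Gamma_n(\emptyset,\emptyset,\{0,1,n-1\})$ is a cubic graph isomorphic to the M\"obius-ladder-type circulant $\mathrm{circ}_{2n}(\{1,n\})$, and deleting any single vertex from it leaves a planar graph; consequently every subdivision of $K_{3,3}$ in it must use \emph{all} $2n$ vertices, with subdividing paths that wrap around the whole cycle. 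Your suggested branch vertices $v_0,v_1,v_{-1}$ and ``a suitable triple of primed vertices'' cannot be joined by internally disjoint paths inside any window of bounded size, so there is no finite check independent of $n$; the same global phenomenon occurs in (ii)--(vii), whose bounded ``strips'' are planar. Only in the $|\mathcal{Q}|=4$ cases (viii)--(x) does your plan work as stated, because there the graph genuinely contains $K_{3,3}$ as an induced subgraph on six explicitly named vertices.

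This is precisely why the paper argues differently for (i)--(vii): it works with minors rather than subdivisions on a fixed vertex set. For (i) it invokes the circulant planarity classification (Heuberger, via Corollary~\ref{cor:planarcirculant}) after the isomorphism with $\mathrm{circ}_{2n}(\{1,n\})$; for (ii)--(vii) it contracts the long runs of $\mathcal{A}$-, $\mathcal{B}$- or $\mathcal{Q}$-edges to collapse the cyclic structure down to a small graph such as $\Gamma_3(\{1\},\{1\},\{0,1,-1\})$ or $\Gamma_4(\{1,2\},\{1\},\{0\})$, and then contracts a few more edges to reveal $K_5$ or $K_{3,3}$. If you want to keep a Kuratowski-subdivision formulation you would have to allow paths whose length grows with $n$ and verify their disjointness globally, which is essentially the contraction argument in disguise; as written, your reduction to small induced subgraphs does not establish non-planarity in cases (i)--(vii).
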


\begin{proof}
\begin{itemize}
  \item[(i)] By setting (for each $0\leq i\leq n-1$) $u_i=v_i$ when $i$ is even, $u_i=v_i'$ when $i$ is odd,  $u_{i+n}=v_i'$ when $i$ is even, $u_{i+n}=v_i$ when $i$ is odd, we see that $\Gamma_n(\emptyset,\emptyset,\{0,1,-1\})$ is isomorphic to $\mathrm{circ}_{2n}(\{1,n\})$ which is non-planar (by Corollary~\ref{cor:planarcirculant}).

  \item[(ii)] Contracting the edges $(v_i,v_{i+1}),(v_i',v_{i+1}')$ $(3\leq i\leq n-2$) contracts $\Gamma=\Gamma_n(\{1\},\{1\},\{0,1,-1\})$ to $\Gamma_3(\{1\},\{1\},\{0,1,-1\})$, which contains $\Gamma_3(\emptyset,\emptyset,\{0,1,-1\})$ as a subgraph, but this is isomorphic to the complete bipartite graph $K_{3,3}$.

  \item[(iii)] Contracting the edges $(v_i,v_{i+1}), (v_i',v_{i+1}')$ $(4\leq i\leq n-2)$ contracts $\Gamma=\Gamma_n(\{1\},\{1\},\{0,2\})$ to $\Gamma_4(\{1\},\{1\},\{0,2\})$. Then contracting the edges $(v_i,v_i')$ ($1\leq i \leq 3$) leaves complete graph $K_5$, so $\Gamma$ is non-planar.

  \item[(iv)] Contracting the edges $(v_i,v_{i+2}),(v_i',v_{i+2}')$ $(3\leq i\leq n-3$) contracts $\Gamma=\Gamma_n(\{2\},\{2\},\{0,1\})$ to $\Gamma_4(\{2\},\{2\},\{0,1\})$. Contracting the edges $(v_0,v_0'),(v_2,v_2')$ leaves the complete bipartite graph $K_{3,3}$, so $\Gamma$ is non-planar.
  \item[(v)] Contracting the edges $(v_i,v_{i+1}),(v_i',v_{i+1}')$ $(3\leq i\leq n-2$) contracts $\Gamma=\Gamma_n(\{1,2\},\{1\},\{0\})$
      to $\Gamma_4(\{1,2\},\{1\},\{0\})$. Contracting the edges $(v_2,v_2'),(v_3,v_3'), (v_0',v_1')$ leaves the complete graph $K_5$, so $\Gamma$ is non-planar.

  \item[(vi)] Contracting the edges $(v_i,v_{i+2})$, $(v_i',v_{i+2}')$, for $0\leq i\leq n-1$, $i$ odd and for $4\leq i\leq n-2$, $i$ even, then contracting the edges $(v_i,v_i')$ ($i=0,2,4$) leaves the complete graph $K_5$, so $\Gamma$ is non-planar.

  \item[(vii)] Contracting the edges $(v_i,v_{i+2}), (v_i',v_{i+2}')$ for each odd $i$ with $1\leq i\leq n-1$ and contracting the edges $(v_i,v_{i+2}), (v_i',v_{i+2}')$ for each even $i$ with $4\leq i\leq n-2$ (which is no edges when $n=6$) then contracting the edges $(v_0,v_{0}')$, $(v_2,v_{2}')$, $(v_4,v_{4}')$ leaves the complete graph $K_5$, so $\Gamma$ is non-planar.

  \item[(viii)] The induced subgraph of $\Gamma=\Gamma_n(\emptyset,\emptyset,\{0,n/4,n/2,3n/4\})$ with vertices $v_0,v_{n/4},v_{n/2}, v_0',v_{n/4}',v_{n/2}'$ is the complete bipartite graph $K_{3,3}$ so $\Gamma$ is non-planar.

  \item[(ix)] The induced subgraph of $\Gamma=\Gamma_n(\emptyset,\emptyset,\{0,n/3,n/2,2n/3\})$ with vertices $v_0,v_{n/3},v_{2n/3}, v_0',v_{n/3}',v_{2n/3}'$ is the complete bipartite graph $K_{3,3}$ so $\Gamma$ is non-planar.

  \item[(x)] The map $v_i\mapsto v_i$, $v_i'\mapsto v_{i+n/2}'$ ($0\leq i\leq n-1$) gives an isomorphism between this graph and that in part~(ix), which is non-planar.
\end{itemize}
\end{proof}

It simplifies our later arguments if we may assume $n\geq 5$ so we now classify the connected planar graphs $\Gamma_n(\mathcal{A},\mathcal{B},\mathcal{Q})$ in the cases $n=2,3,4$.

\begin{lemma}\label{lem:n=2or3}
Suppose $\mathcal{Q}\neq \emptyset$, $0\not \in \mathcal{A},\mathcal{B}$ and assume that $\Gamma=\Gamma_n(\mathcal{A},\mathcal{B},\mathcal{Q})$ is properly given. Then
\begin{itemize}
  \item[1.] $\Gamma_2(\mathcal{A},\mathcal{B},\mathcal{Q})$ is connected and planar if and only if $(\{\mathcal{A},\mathcal{B}\},\mathcal{Q})$ is any of the cases from Class~I of Table~\ref{table:planarWHG}.

  \item[2.] $\Gamma_3(\mathcal{A},\mathcal{B},\mathcal{Q})$ is connected and planar if and only if $(\{\mathcal{A},\mathcal{B}\},\mathcal{Q})$ is any of the cases from Class~I of Table~\ref{table:planarWHG}.

  \item[3.] $\Gamma_4(\mathcal{A},\mathcal{B},\mathcal{Q})$ is connected and planar if and only if $(\{\mathcal{A},\mathcal{B}\},\mathcal{Q})$ is any of the cases from Class~I or Class~II of Table~\ref{table:planarWHG}.

\end{itemize}
\end{lemma}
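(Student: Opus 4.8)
\textbf{Plan for the proof of Lemma~\ref{lem:n=2or3}.}
The strategy is brute-force enumeration that is feasible because $n$ is tiny and $\Gamma$ is required to be properly given, connected, and have no loops. For each of $n=2,3,4$ I would first bound the possible sets $\mathcal{A},\mathcal{B}$. When $n=2$ the only allowed nonzero shift is $1$, so $\mathcal{A},\mathcal{B}\in\{\emptyset,\{1\}\}$; when $n=3$ we have $2\equiv -1$, so properly given forces $\mathcal{A},\mathcal{B}\in\{\emptyset,\{1\}\}$ again; when $n=4$ we have $3\equiv -1$ and $2\equiv -2$, so $\mathcal{A},\mathcal{B}\in\{\emptyset,\{1\},\{2\}\}$ (a set like $\{1,2\}$ is allowed by ``properly given'' but will be handled below). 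Next I would use the translation action of $\theta$ to normalise $\mathcal{Q}$: since replacing $\mathcal{Q}$ by $\mathcal{Q}-q$ just relabels the $v_i'$, I may assume $0\in\mathcal{Q}$, so $\mathcal{Q}\subseteq\{0,1,\dots,n-1\}$ with $0\in\mathcal{Q}$. For $n=2$ that leaves $\mathcal{Q}\in\{\{0\},\{0,1\}\}$; for $n=3$, $\mathcal{Q}\in\{\{0\},\{0,1\},\{0,2\},\{0,1,2\}\}$ (and $\{0,1\}\cong\{0,2\}$ via $i\mapsto -i$); for $n=4$ a similarly short list. So in each case there are only a handful of pairs $(\{\mathcal{A},\mathcal{B}\},\mathcal{Q})$ to examine.

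For each surviving candidate I would then do two things: check connectedness via Lemma~\ref{lem:connectedGamma} (compute $d=\gcd(n,\mathcal{A},\mathcal{B},\mathcal{Q}-q_0)$ and discard those with $d>1$, or rather identify them with a smaller graph), and decide planarity. Planarity is settled by one of three tools: (a) if it is connected and small, I can exhibit an explicit planar embedding or simply recognise it (e.g. $\Gamma_2(\{1\},\{1\},\{0\})$ is a small graph drawn directly); (b) if $\mathcal{A}=\mathcal{B}=\emptyset$ the graph is a circulant $\mathrm{circ}_{2n}$-type graph and I invoke Corollary~\ref{cor:planarcirculant} after the relabelling used in Proposition~\ref{prop:nonplanargraphs}(i); (c) for non-planarity I appeal directly to the relevant part of Proposition~\ref{prop:nonplanargraphs} — indeed parts (i)--(v) of that Proposition were stated for $n\ge 4$ precisely to cover $n=4$ here, and for $n=2,3$ one checks the remaining candidates contain a $K_{3,3}$ or $K_5$ minor by inspection (for instance $\Gamma_3(\emptyset,\emptyset,\{0,1,2\})\cong K_{3,3}$, and $\Gamma_3(\{1\},\{1\},\{0,1,2\})$ contains it). Having sorted every candidate into ``planar'' or ``non-planar'', I match the planar ones against the rows of Table~\ref{table:planarWHG}: for $n=2,3$ all the surviving planar graphs should coincide (up to the isomorphism $\mathcal{A}\leftrightarrow\mathcal{B}$, $i\mapsto -i$, and $\mathcal{Q}\mapsto\mathcal{Q}-q$) with the Class~I entries evaluated at $n=2$ or $n=3$; for $n=4$ I additionally pick up the Class~II entries (Class~III requires $n\equiv2\bmod 4$ so contributes nothing at $n=4$, and this must be noted).

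The main obstacle is not any single hard argument but the bookkeeping: making sure the normalisation ``$0\in\mathcal{Q}$, $\mathcal{A},\mathcal{B}$ up to sign and swap'' genuinely exhausts all isomorphism types, so that no planar graph is missed and none is double-counted, and then verifying that the resulting short list of planar graphs is exactly the specialisation of Table~\ref{table:planarWHG} to the given $n$ — in particular checking the coincidences between table rows that occur only at small $n$ (such as the remark that $\Gamma_n(\{s\},\{s\},\{q\})\cong\Gamma_n(\emptyset,\emptyset,\{q,q+s,q-s\})$ for even $n$, which collapses two table rows when $n=4$). A secondary subtlety at $n=4$ is the case $\mathcal{A}$ or $\mathcal{B}$ equal to $\{1,2\}$ or to a two-element set: ``properly given'' permits $\{1,2\}$, but connectedness together with Corollary~\ref{cor:planarcirculant} (applied to the induced circulant subgraph $\mathrm{circ}_4(\mathcal{A})$, which must be planar) and Proposition~\ref{prop:nonplanargraphs}(v) should eliminate it, and I would spell that elimination out carefully since it is the one place where the $n=4$ analysis genuinely uses a non-planar minor rather than a direct drawing.
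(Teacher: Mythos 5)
Your overall route is the same as the paper's: bound $\mathcal{A},\mathcal{B}$ using the properly-given condition, normalise $\mathcal{Q}$ by a translation, use Lemma~\ref{lem:connectedGamma} for connectedness, and decide planarity by direct embeddings, Corollary~\ref{cor:planarcirculant}, and the non-planar graphs of Proposition~\ref{prop:nonplanargraphs}, finally matching against Table~\ref{table:planarWHG}. For $n=2,3$ your plan goes through. But there is a concrete error in your $n=4$ analysis: you claim that the case $\mathcal{A}$ (or $\mathcal{B}$) equal to $\{1,2\}$ ``should be eliminated'' by connectedness, Corollary~\ref{cor:planarcirculant} and Proposition~\ref{prop:nonplanargraphs}(v). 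It cannot be eliminated. The induced circulant $\mathrm{circ}_4(\{1,2\})\cong K_4$ is planar (case (ii) of Corollary~\ref{cor:planarcirculant} with $s=1$), so that tool rules out nothing; and Proposition~\ref{prop:nonplanargraphs}(v) only applies when the \emph{other} set contains $\pm 1$, i.e.\ when $\Gamma_4(\{1,2\},\{1\},\{0\})$ actually occurs as a subgraph. When the other set is $\{2\}$, the graphs $\Gamma_4(\{1,2\},\{2\},\{q\})$ and $\Gamma_4(\{1,2\},\{2\},\{q,q+2\})$ are connected and planar: they are exactly the $n=4$, $s=1$ specialisations of types (II.14) and (II.12) of Table~\ref{table:planarWHG} (and (II.14) is one of the two graphs the paper highlights as coming from genuine spines). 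Carrying out the elimination you describe would therefore produce a list at $n=4$ strictly smaller than Class~I together with Class~II, i.e.\ it would fail to prove part~3 of the lemma, and would also contradict the ``if'' direction, since those table entries are planar.

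The fix is the finer case split the paper performs when $|\mathcal{A}|+|\mathcal{B}|\geq 3$: if $\{\mathcal{A},\mathcal{B}\}$ is $\{\{\pm1,2\},\{\pm1,2\}\}$ or $\{\{\pm1,2\},\{\pm1\}\}$, then $\Gamma$ contains $\Gamma_4(\{1,2\},\{1\},\{0\})$ and Proposition~\ref{prop:nonplanargraphs}(v) applies; if instead $\{\mathcal{A},\mathcal{B}\}=\{\{\pm1,2\},\{2\}\}$, one may only exclude those $\mathcal{Q}$ containing both $q$ and $q\pm1$ (via the subgraph $\Gamma_4(\{2\},\{2\},\{0,1\})$ and Proposition~\ref{prop:nonplanargraphs}(iv)), and must retain $\mathcal{Q}=\{q\}$ and $\mathcal{Q}=\{q,q+2\}$ as the planar types (II.14) and (II.12). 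With that correction, the remaining bookkeeping in your plan (normalisation of $\mathcal{Q}$, the $n=4$ coincidences between table rows such as (I.3) and (II.6), and the vacuity of Class~III at $n=4$) matches the paper's proof.
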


\begin{proof}
\noindent 1. If $\mathcal{A}=\mathcal{B}=\emptyset$ and $|\mathcal{Q}|=1$ then $\Gamma=\Gamma_2(\mathcal{A},\mathcal{B},\mathcal{Q})$ is disconnected; in all other cases $\Gamma$ is connected and planar (of type~(I.1),(I.2),(I.3),(I.4), or~(I.5)).

\noindent 2. Here $\mathcal{A},\mathcal{B}\subseteq \{\pm 1\}$. If $|\mathcal{Q}|=1$ then $\Gamma$ is connected and planar if and only if $\mathcal{A},\mathcal{B}$ are not both empty; that is, in cases (I.3) and (I.4). If $|\mathcal{Q}|=2$ then $\Gamma$ is connected and planar in all cases; that is, in cases~(I.1),(I.2) and (I.5). If $|\mathcal{Q}|=3$ then $\Gamma$ contains $\Gamma_3(\emptyset,\emptyset,\{0,1,2\})$, which is the complete graph $K_{3,3}$, so it is non-planar.

\noindent 3. We have $\mathcal{A}\subseteq\{\pm 1,2\}$, $\mathcal{B}\subseteq\{\pm 1,2\}$ and there exists $q\in \mathcal{Q}$ for some $q \in \{0,1,2,3\}$.
  If $|\mathcal{Q}|=4$ then $\Gamma$ contains the (non-planar) complete bipartite graph $K_{4,4}$ so we may assume $|\mathcal{Q}|\leq 3$.

  \noindent \textbf{Case~1: $|\mathcal{A}|+|\mathcal{B}|\geq 3$.} If $\{ \mathcal{A},\mathcal{B}\}=\{ \{\pm 1, 2\}, \{\pm 1,2\} \}$ or $\{ \{\pm 1, 2\}, \{\pm 1\} \}$ then $\Gamma$ contains a subgraph isomorphic to $\Gamma_4( \{1, 2\}, \{1\}, \{0\} )$, which is non-planar by Proposition~\ref{prop:nonplanargraphs}(v). Thus we have $\{ \mathcal{A},\mathcal{B}\}=\{ \{\pm 1, 2\}, \{2\} \}$. If $q\pm 1 \in \mathcal{Q}$ then $\Gamma$ contains a subgraph isomorphic to $\Gamma_4( \{2\},\{2\},\{0,1\})$ which is non-planar by Proposition~\ref{prop:nonplanargraphs}(iv). Thus $\mathcal{Q}=\{q\}$ or $\{q,q+2\}$, in which case $\Gamma$ is connected and planar (of type~(II.12) or~(II.14)).

  \noindent \textbf{Case~2: $|\mathcal{A}|+|\mathcal{B}|=2$.} Suppose first that $|\mathcal{A}|=|\mathcal{B}|=1$ so either $\{\mathcal{A},\mathcal{B}\}=\{ \{\pm 1\}, \{\pm 1\}\}$ or $\{\{\pm 1\}, \{2\}\}$ or $\{\{2\},\{2\}\}$. Suppose $\mathcal{A}=\{\pm 1\}$, $\mathcal{B}=\{\pm 1\}$. If $q,q+2 \in \mathcal{Q}$ then $\Gamma$ contains $\Gamma_4(\{1\},\{1\},\{0,2\})$, which is non-planar by Proposition~\ref{prop:nonplanargraphs}(iii). If $\mathcal{Q}=\{q,q+1,q-1\}$ then $\Gamma$ contains $\Gamma_4(\emptyset,\emptyset,\{q,q+1,q-1\})$ which is non-planar by Proposition~\ref{prop:nonplanargraphs}(ii). If $\mathcal{Q}=\{q\}$ or $\{q,q\pm 1\}$ then $\Gamma$ is planar (of type~(I.3) or~(I.1)). Suppose that $\{\mathcal{A},\mathcal{B}\}=\{ \{\pm 1\}, \{2\}\}$. Then $\Gamma$ is planar for any $\mathcal{Q}$ with $|\mathcal{Q}|\leq 3$ (of type~(II.1),(II.7),(II.11), or~(II.15)). Suppose that $\{\mathcal{A},\mathcal{B}\}=\{ \{2\}, \{2\}\}$. Then if $\mathcal{Q}=\{q\}$ or $\{q,q+2\}$ then $\Gamma$ is disconnected; if $\mathcal{Q}=\{q,q\pm 1\}$ or $\{q,q+1,q-1\}$ then $\Gamma$ contains a subgraph isomorphic to $\Gamma_4(\{2\},\{2\},\{0,1\})$, which is non-planar by Proposition~\ref{prop:nonplanargraphs}(iv). Suppose then that $\mathcal{A}=\emptyset$ or $\mathcal{B}=\emptyset$. Then $\{\mathcal{A},\mathcal{B}\}=\{\emptyset,\{\pm 1,2\}\}$ and $\Gamma$ is connected and properly given, of type~(II.2),(II.5),(II.10) or (II.13).

  \noindent \textbf{Case~3: $|\mathcal{A}|+|\mathcal{B}|<2$.} Suppose first that $\mathcal{A}=\mathcal{B}=\emptyset$. If $\mathcal{Q}=\{q\}$ or $\{q,q+2\}$ then $\Gamma$ is disconnected. If $\mathcal{Q}=\{q,q\pm 1\}$ or $|\mathcal{Q}|=3$ then $\Gamma$ is connected and planar (of type~(I.5) or~(II.6)). Suppose then that $|\mathcal{A}|+|\mathcal{B}|=1$; then without loss of generality we may assume $\mathcal{A}=\emptyset$. If $\mathcal{B}=\{2\}$ and either $\mathcal{Q}=\{q\}$ or $\{q,q+2\}$ then $\Gamma$ is disconnected; if $\mathcal{B}=\{2\}$ and $\mathcal{Q}=\{q,q\pm 1\}$ then $\Gamma$ is connected and planar (of type~(II.8)); if $\mathcal{B}=\{2\}$ and $\mathcal{Q}=\{q,q+1,q-1\}$ then $\Gamma$ is planar (of type~(II.4)). Thus we may assume that $\mathcal{B}=\{\pm 1\}$, in which case $\Gamma$ is connected and planar of type~(I.2),(I.4),(II.3) or~(II.9).
\end{proof}

In the next three lemmas we give further conditions for non-planarity of $\Gamma$.

\begin{lemma}\label{lem:nonplanarsizesPART0}
Suppose $0\not \in \mathcal{A},\mathcal{B}$ and assume that $\Gamma=\Gamma_n(\mathcal{A},\mathcal{B},\mathcal{Q})$ is properly given. If $|\mathcal{A}|>2$ or $|\mathcal{B}|>2$ then $\Gamma$ is non-planar.
\end{lemma}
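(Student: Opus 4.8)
The statement is that if $|\mathcal{A}|>2$ or $|\mathcal{B}|>2$ (with $\Gamma=\Gamma_n(\mathcal{A},\mathcal{B},\mathcal{Q})$ properly given, $0\notin\mathcal{A},\mathcal{B}$, $\mathcal{Q}\neq\emptyset$) then $\Gamma$ is non-planar. By symmetry in the roles of $\mathcal{A}$ and $\mathcal{B}$ (swap the two orbits of vertices, which is an isomorphism of the underlying abstract graph after a relabelling of $\mathcal{Q}$), it suffices to treat the case $|\mathcal{A}|>2$, i.e.\ $|\mathcal{A}|\geq 3$. The key observation is that $\Gamma$ contains the induced circulant subgraph $\mathrm{circ}_n(\mathcal{A})$ on the vertices $v_0,\dots,v_{n-1}$, and that adding \emph{any} single edge to any vertex $v_i$ (here, an edge $(v_i,v_{i+q}')$ for some fixed $q\in\mathcal{Q}$) already suffices to force non-planarity once $\mathrm{circ}_n(\mathcal{A})$ is sufficiently large and connected. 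So the plan is: first reduce to the connected case; then use Corollary~\ref{cor:planarcirculant} to see that a properly-given $\mathrm{circ}_n(\mathcal{A})$ with $|\mathcal{A}|\geq 3$ is already non-planar, and deduce the result trivially; and handle the edge case where $\mathrm{circ}_n(\mathcal{A})$ is disconnected separately.

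\medskip

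\textbf{Step 1 (reduction to connected $\Gamma$).} By Lemma~\ref{lem:connectedGamma}, $\Gamma$ is a disjoint union of $d$ copies of $\Gamma_{n/d}(\mathcal{A}/d,\mathcal{B}/d,\mathcal{Q}/d)$ where $d=\gcd(n,a\ (a\in\mathcal{A}),b\ (b\in\mathcal{B}),q-q_0\ (q\in\mathcal{Q}))$. Crucially, $|\mathcal{A}/d|=|\mathcal{A}|$ and $|\mathcal{B}/d|=|\mathcal{B}|$ (dividing through by $d$ is a bijection on these sets, since $d$ divides every element of $\mathcal{A}$ and of $\mathcal{B}$), and $\Gamma_{n/d}(\mathcal{A}/d,\mathcal{B}/d,\mathcal{Q}/d)$ is again properly given with $0$ in neither of the first two sets and with nonempty third set. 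Moreover $\Gamma$ is planar iff each component is. So without loss of generality we may assume $\Gamma$ is connected, i.e.\ $d=1$.

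\medskip

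\textbf{Step 2 (the circulant $\mathrm{circ}_n(\mathcal{A})$ and its connectedness).} The induced subgraph of $\Gamma$ on $\{v_0,\dots,v_{n-1}\}$ is $\mathrm{circ}_n(\mathcal{A})$, and it is properly given because $\Gamma$ is. If $\mathrm{circ}_n(\mathcal{A})$ is planar then, since $|\mathcal{A}|\geq 3$, Corollary~\ref{cor:planarcirculant} is violated: cases (i),(ii),(iii) there all have $|\mathcal{S}|\leq 2$. Hence $\mathrm{circ}_n(\mathcal{A})$ is non-planar, and therefore so is $\Gamma$, which contains it as a subgraph. This disposes of the case where $\mathrm{circ}_n(\mathcal{A})$ is connected, but Corollary~\ref{cor:planarcirculant} is stated \emph{without} a connectedness hypothesis, so in fact it already disposes of every case: a properly-given $\mathrm{circ}_n(\mathcal{A})$ with $|\mathcal{A}|\geq 3$ is non-planar outright, and $\Gamma\supseteq\mathrm{circ}_n(\mathcal{A})$, so $\Gamma$ is non-planar.

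\medskip

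\textbf{Main obstacle.} There is essentially no obstacle here beyond citing Corollary~\ref{cor:planarcirculant} correctly: the whole content is that ``large properly-given circulant graphs are non-planar,'' which is already in hand, and a graph containing a non-planar subgraph is non-planar. The only point requiring a moment's care is verifying that passing from $\mathcal{A}$ to $\mathcal{A}/d$ in Step 1 preserves both the cardinality $|\mathcal{A}|\geq 3$ and the ``properly given'' property with $0\notin\mathcal{A}/d$; since $d\mid n$ and $d$ divides every $a\in\mathcal{A}$, the map $a\mapsto a/d$ is injective on $\mathcal{A}$ modulo $n/d$ and sends $\pm a_1\not\equiv\pm a_2\ (\mathrm{mod}\ n)$ to $\pm(a_1/d)\not\equiv\pm(a_2/d)\ (\mathrm{mod}\ n/d)$, and no nonzero multiple of $d$ divided by $d$ is $\equiv 0\ (\mathrm{mod}\ n/d)$ unless the original was $\equiv 0\ (\mathrm{mod}\ n)$. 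In fact, as noted, Step 1 can be bypassed entirely since Corollary~\ref{cor:planarcirculant} needs no connectedness assumption; I would present the short direct argument (Step 2 applied immediately) and mention the reduction only if a cleaner exposition calls for it.
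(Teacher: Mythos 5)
Your proposal is correct and, once you strip away the unnecessary Step~1, it is exactly the paper's argument: the properly given circulant $\mathrm{circ}_n(\mathcal{A})$ (or $\mathrm{circ}_n(\mathcal{B})$) is a subgraph of $\Gamma$, and by Corollary~\ref{cor:planarcirculant} (which needs no connectedness hypothesis) every planar properly given circulant has connection set of size at most $2$, so $|\mathcal{A}|>2$ or $|\mathcal{B}|>2$ forces a non-planar subgraph. The reduction via Lemma~\ref{lem:connectedGamma} is superfluous, as you yourself observe.
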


\begin{proof}
The properly given circulant graphs $\mathrm{circ}_n(\mathcal{A})$, $\mathrm{circ}_n(\mathcal{B})$ are subgraphs of $\Gamma_n(\mathcal{A},\mathcal{B},\mathcal{Q})$ and are non-planar by Corollary~\ref{cor:planarcirculant}.
\end{proof}

\begin{lemma}\label{lem:nonplanarsizesPART1}
Suppose $0\not \in \mathcal{A},\mathcal{B}$, $n\geq 5$, and assume that $\Gamma=\Gamma_n(\mathcal{A},\mathcal{B},\mathcal{Q})$ is properly given. If $|\mathcal{Q}|>3$ then $\Gamma$ is non-planar.
\end{lemma}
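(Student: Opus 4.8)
The plan is to discard the $\mathcal{A}$- and $\mathcal{B}$-edges entirely and derive a contradiction purely from the number of $\mathcal{Q}$-edges, via the Euler-formula edge bound for bipartite planar graphs. Let $\Delta$ denote the spanning subgraph $\Gamma_n(\emptyset,\emptyset,\mathcal{Q})$ of $\Gamma$, obtained by deleting every edge of the form $(v_i,v_{i+a})$ with $a\in\mathcal{A}$ and every edge of the form $(v_i',v_{i+b}')$ with $b\in\mathcal{B}$. Since a graph containing a non-planar subgraph is itself non-planar, it suffices to show that $\Delta$ is non-planar.

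First I would record the combinatorial data of $\Delta$. It is bipartite, with the two parts $\{v_0,\dots,v_{n-1}\}$ and $\{v_0',\dots,v_{n-1}'\}$, so it has $2n$ vertices. Moreover it is simple: for a fixed $i$ the vertices $v_{i+q}'$ ($q\in\mathcal{Q}$) are pairwise distinct, because $\mathcal{Q}\subseteq\{0,\dots,n-1\}$ contains no two elements congruent mod $n$; hence the $|\mathcal{Q}|$ edges leaving $v_i$ in $\Delta$ reach $|\mathcal{Q}|$ distinct vertices, and $\Delta$ has exactly $n\,|\mathcal{Q}|$ edges. (The hypothesis that $\Gamma$ is properly given plays no role here: properness is only needed to prevent redundancy among the $\mathcal{A}$- and $\mathcal{B}$-edges, which we have thrown away.)

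Now I would invoke the standard consequence of Euler's formula that a simple bipartite planar graph on $m\geq 3$ vertices has at most $2m-4$ edges. Applying this with $m=2n\geq 3$: if $\Delta$ were planar then $n\,|\mathcal{Q}|\leq 2(2n)-4=4n-4$, whence $|\mathcal{Q}|<4$, contradicting $|\mathcal{Q}|\geq 4$. Therefore $\Delta$, and hence $\Gamma$, is non-planar.

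There is essentially no hard step; the only point needing a moment's care is the verification that $\Delta$ is simple, so that the bipartite-planar edge bound applies, and this is immediate from $\mathcal{Q}\subseteq\{0,\dots,n-1\}$. If one prefers to quote the edge bound only in its connected form, one may instead first replace $\Delta$ by one of its connected components using Lemma~\ref{lem:connectedGamma} with $\mathcal{A}=\mathcal{B}=\emptyset$: that component is isomorphic to $\Gamma_{n/d}(\emptyset,\emptyset,\mathcal{Q}/d)$ with $|\mathcal{Q}/d|=|\mathcal{Q}|\geq 4$, and the same counting argument, now with $2n/d$ vertices and $(n/d)|\mathcal{Q}|$ edges, again contradicts planarity. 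Either way the argument is uniform in $\mathcal{Q}$ and avoids the explicit minor and subdivision constructions used in Proposition~\ref{prop:nonplanargraphs}.
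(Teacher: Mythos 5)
Your proof is correct, and it takes a genuinely different and more elementary route than the paper. The paper reduces to the case $|\mathcal{Q}|=4$, contracts each of the four perfect matchings $(v_i,v_{i+q}')$ in turn to obtain circulant graphs $C^{(0)},C^{(q)},C^{(r)},C^{(s)}$, and argues via Heuberger's classification (Corollary~\ref{cor:planarcirculant}) that if any of these is properly given it is non-planar, then disposes of the residual congruence cases ($n/2\in\mathcal{Q}$, $q\equiv\pm 2r$, etc.) using the explicit $K_{3,3}$ subgraphs of Proposition~\ref{prop:nonplanargraphs}(viii)--(x). You instead pass to the spanning subgraph $\Gamma_n(\emptyset,\emptyset,\mathcal{Q})$, which is simple, bipartite and $|\mathcal{Q}|$-regular with $2n$ vertices and $n|\mathcal{Q}|$ edges, and invoke the Euler bound $E\leq 2V-4$ for simple bipartite planar graphs: $n|\mathcal{Q}|\geq 4n>4n-4$ is an immediate contradiction. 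Your handling of the one delicate point is also right: the subgraph is simple because the residues in $\mathcal{Q}$ are distinct mod $n$, and the possible disconnectedness of $\Gamma_n(\emptyset,\emptyset,\mathcal{Q})$ is harmless, either because every component is $|\mathcal{Q}|$-regular (so contains a cycle and satisfies the per-component bound) or, as you note, by passing to a component via Lemma~\ref{lem:connectedGamma}. What your argument buys is brevity and generality: it needs neither the hypothesis $n\geq 5$ nor that $\Gamma$ is properly given, and it is independent of Theorem~\ref{thm:planarcirculant} and Proposition~\ref{prop:nonplanargraphs}; what the paper's approach buys is only stylistic uniformity with the rest of Section~3, where the same contraction-to-circulant technique is used repeatedly.
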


\begin{proof}
Let $q_0\in \mathcal{Q}$. For each $0\leq i\leq n-1$ set $w_i=v_i$, $w_i'=v_{i+q_0}'$ and set $\mathcal{R}=\{q-q_0\ |\ q\in \mathcal{Q}\}$, so $0\in \mathcal{R}$. Then $\Gamma$ has vertices $w_i,w_i'$ ($0\leq i\leq n-1$) and edges $(w_i,w_{i+a})$, $(w_i',w_{i+b}')$, $(w_i,w_{i+r}')$ where $a\in \mathcal{A}$, $b\in \mathcal{B}$, $r\in \mathcal{R}$, so $\Gamma$ is isomorphic to $\Gamma_n(\mathcal{A},\mathcal{B},\mathcal{R})$ and hence we may assume that $0\in \mathcal{Q}$.

Suppose that $|\mathcal{Q}|=4$, and let $\mathcal{Q}=\{0,q,r,s\}$, say, where $0,q,r,s$ are distinct mod~$n$ and suppose that $\Gamma$ is connected and planar. Consider the subgraph $\Lambda=\Gamma_n(\emptyset,\emptyset,\mathcal{Q})=\Gamma_n(\emptyset,\emptyset,\{0,q,r,s\})$ of $\Gamma$.

Let $C^{(0)}$ be the graph obtained from $\Lambda$ by contracting the edges $(v_i,v_i')$;
let $C^{(q)}$ be the graph obtained from $\Lambda$ by contracting the edges $(v_i,v_{i+q}')$;
let $C^{(r)}$ be the graph obtained from $\Lambda$ by contracting the edges $(v_i,v_{i+r}')$;
let $C^{(s)}$ be the graph obtained from $\Lambda$ by contracting the edges $(v_i,v_{i+s}')$ ($0\leq i \leq n-1$).
Then $C^{(0)}$, $C^{(q)}$, $C^{(r)}$, $C^{(s)}$, are the (not necessarily properly given) circulant graphs $\mathrm{circ}_n(S^{(0)})$, $\mathrm{circ}_n(S^{(q)})$, $\mathrm{circ}_n(S^{(r)})$, $\mathrm{circ}_n(S^{(s)})$, where $S^{(0)}=\{q,r,s\}$, $S^{(q)}=\{-q,r-q,s-q\}$,
$S^{(r)}=\{-r,q-r,s-r\}$, $S^{(s)}=\{-s,q-s,r-s\}$.

Since $0,q,r,s$ are distinct mod~$n$ we have $|S^{(0)}|=|S^{(q)}|=|S^{(r)}|=|S^{(s)}|=3$ so if any of $C^{(0)}$,  $C^{(q)}$,  $C^{(r)}$,  $C^{(s)}$ are properly given then it is non-planar (by Corollary~\ref{cor:planarcirculant}) and so $\Gamma$ is non-planar. The conditions for these circulants to be properly given is as follows (all congruences are mod~$n$):\\

\begin{tabular}{ll}
$C^{(0)}$ : & $q \not \equiv -r$ and $q\not \equiv -s$ and $r\not \equiv -s$;\\
$C^{(q)}$ : & $r \not \equiv 2q$ and $s\not \equiv 2q$ and $r+s\not \equiv 2q$;\\
$C^{(r)}$ : & $q \not \equiv 2r$ and $s\not \equiv 2r$ and $q+s\not \equiv 2r$;\\
$C^{(s)}$ : & $q \not \equiv 2s$ and $r\not \equiv 2s$ and $q+r\not \equiv 2s$.\\
\end{tabular}

\medskip

Suppose $n/2 \in \mathcal{Q}$. Without loss of generality we may set $q=n/2$ and therefore $r,s\not \equiv n/2$~mod~$n$. Then the conditions for $C^{(0)}$,  $C^{(q)}$ to be properly given each become $r\not \equiv -s$~mod~$n$, so we may assume $s\equiv -r$~mod~$n$ in which case the conditions for $C^{(r)}$,  $C^{(s)}$ to be properly given become $n/2 \not \equiv 2r$ and $3r\not \equiv 0,n/2$. Thus we may assume $r\equiv \pm n/4$ or $r\equiv \pm n/3$~mod~$n$ or $r\equiv \pm n/6$~mod~$n$. That is, ($q=n/2$, $r=\pm n/4$, $s=\mp n/4$) or ($q=n/2$, $r=\pm n/3$, $s=\mp n/3$) or ($q=n/2$, $r=\pm n/6$, $s=\mp n/6$) so $\Lambda=\Gamma_n(\emptyset,\emptyset, \{0,n/2, n/4,-n/4\})$ or $\Lambda=\Gamma_n(\emptyset,\emptyset,\{0,n/2,n/3,-n/3\})$ or $\Lambda=\Gamma_n(\emptyset,\emptyset,\{0,n/2,n/6,-n/6\})$, in which case $\Lambda$ is non-planar by Proposition~\ref{prop:nonplanargraphs}(viii),(ix),(x).

Suppose then that $n/2 \not \in \mathcal{Q}$. If $q \not \equiv -r$ and $q\not \equiv -s$ and $r\not \equiv -s$ then $C^{(0)}$ is properly given, and hence non-planar, so without loss of generality we may assume $s=n-r$. Noting that $n-q\not \equiv q$, $n-r\not \equiv r$, $n-s\not \equiv s$, (because $n/2 \not \in \mathcal{Q}$) the conditions for  $C^{(q)}$,  $C^{(r)}$,  $C^{(s)}$ to be properly given become\\

\begin{tabular}{ll}
$C^{(q)}$ : & $r \not \equiv 2q$ and $-r\not \equiv 2q$;\\
$C^{(r)}$ : & $q \not \equiv +2r$ and $3r\not \equiv 0$ and $q\not \equiv 3r$;\\
$C^{(s)}$ : & $q \not \equiv -2r$ and $3r\not \equiv 0$ and $q\not \equiv -3r$.\\
\end{tabular}

\medskip

We have $C^{(0)}=\mathrm{circ}_n(\{q,r\})$ and $q\not \equiv \pm r$~mod~$n$ (since $q,r,s$ are distinct mod~$n$) so Corollary~\ref{cor:planarcirculant} gives that $C^{(0)}$ is planar only if (i) $q\equiv \pm 2r$ and $n/(n,r)$ is even or (ii) $r\equiv \pm 2q$ and $n/(n,q)$ is even.

\noindent \textbf{Case 1. $q\equiv \pm 2r$~mod~$n$.} Then the conditions for $C^{(q)}$ to be properly given become $3r \not \equiv 0$ and $5r\not \equiv 0$. If $3r\equiv 0$~mod~$n$ then $q\equiv \mp r$~mod~$n$, so $s\equiv \pm q$~mod~$n$ and we have a contradiction as $q,r,s$ are distinct mod~$n$. If $5r\equiv 0$~mod~$n$ then $(n,r)=n/5$ so $n/(n,r)=5$ is odd so $C^{(0)}$ is non-planar.

\noindent \textbf{Case 2. $r\equiv 2q$~mod~$n$ (resp. $r\equiv -2q$~mod~$n$).} Then the conditions for $C^{(r)}$ (resp. $C^{(s)}$) to be properly given become
\( 3q\not \equiv 0, 6q\not \equiv 0, 5q \not \equiv 0.\)
If $3q\equiv 0$ then $r\equiv \pm q$ so $s\equiv \mp q$ and we have a contradiction as $q,r,s$ are distinct mod~$n$. If $5q\equiv 0$~mod~$n$ then $(n,q)=n/5$ so $n/(n,q)=5$ is odd so $C^{(0)}$ is non-planar. If $6q\equiv 0$~mod~$n$ then $2q\equiv \pm n/3$ and $\Lambda$ contains $\Gamma_n(\emptyset,\emptyset,\{0,2q,-2q\})$ whose connected components are isomorphic to $\Gamma_{3}(\emptyset,\emptyset,\{0,1,-1\})$, which is the complete bipartite graph $K_{3,3}$ so $\Lambda$ is non-planar.
\end{proof}

\begin{lemma}\label{lem:nonplanarsizesPART2}
Suppose $\mathcal{Q}\neq \emptyset$, $0\not \in \mathcal{A},\mathcal{B}$ and assume that $\Gamma=\Gamma_n(\mathcal{A},\mathcal{B},\mathcal{Q})$ is properly given. If  $|\mathcal{A}|=|\mathcal{B}|=2$ then $\Gamma$ is non-planar.
\end{lemma}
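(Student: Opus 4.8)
The plan is to reduce to a small, explicit list of configurations $\Gamma_n(\mathcal{A},\mathcal{B},\mathcal{Q})$ with $|\mathcal{A}|=|\mathcal{B}|=2$ and then show each one is non-planar, mostly by exhibiting a $K_5$ or $K_{3,3}$ minor via edge contractions, or by invoking a previously established non-planar graph from Proposition~\ref{prop:nonplanargraphs} or the circulant classification in Corollary~\ref{cor:planarcirculant}. First I would dispose of the case $\mathcal{Q}\neq\emptyset$ but $|\mathcal{Q}|\geq 3$: by Lemma~\ref{lem:nonplanarsizesPART1} we may assume $|\mathcal{Q}|\leq 3$, and actually the presence of two "chords" on each side will already be enough with $|\mathcal{Q}|\geq 2$, so the substantive work is when $|\mathcal{Q}|\in\{1,2\}$ (the case $|\mathcal{Q}|=3$ being handled either directly or by containment of a $|\mathcal{Q}|=2$ subcase). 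By Lemma~\ref{lem:nonplanarsizesPART0} the induced circulant subgraphs $\mathrm{circ}_n(\mathcal{A})$ and $\mathrm{circ}_n(\mathcal{B})$ must each be planar, so by Corollary~\ref{cor:planarcirculant} (applied with $|\mathcal{A}|=|\mathcal{B}|=2$) each of $\mathcal{A},\mathcal{B}$ is of the form $\{s,\pm 2s\}$ (with $n/d$ even, $d=(n,s)$) or $\{s,n/2\}$ (with $2d\mid s$, $n/d\equiv 2\bmod 4$). Using the translation trick from the proof of Lemma~\ref{lem:nonplanarsizesPART1} (setting $w_i=v_i$, $w_i'=v_{i+q_0}'$) I may normalize $0\in\mathcal{Q}$, and using the scaling/relabelling of Lemma~\ref{lem:connectedGamma} together with multiplying all indices by a unit I may normalize one generator of $\mathcal{A}$ to a small value (e.g.\ $1$ or $2$).

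The key step is the case analysis on the shapes of $\mathcal{A}$ and $\mathcal{B}$. If either side is of type $\{s,2s\}$, after rescaling so that $s=1$ this side contributes edges $(v_i,v_{i+1})$ and $(v_i,v_{i+2})$; contracting the length-$1$ edges outside a small window collapses $\Gamma$ onto a graph $\Gamma_4(\{1,2\},\mathcal{B}',\mathcal{Q}')$ or similar, and one checks this contains $\Gamma_4(\{1,2\},\{1\},\{0\})$ (non-planar by Proposition~\ref{prop:nonplanargraphs}(v)) or $\Gamma_4(\{2\},\{2\},\{0,1\})$ (Proposition~\ref{prop:nonplanargraphs}(iv)) or $\Gamma_4(\{1\},\{1\},\{0,2\})$ (Proposition~\ref{prop:nonplanargraphs}(iii)) as a subgraph after further contractions. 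When both sides are of type $\{s,n/2\}$ (forcing $n\equiv 2\bmod 4$), after rescaling one reaches $\Gamma_n(\{2,n/2\},\{2,n/2\},\mathcal{Q})$ with $0\in\mathcal{Q}$, which contains $\Gamma_n(\{2,n/2\},\{2,n/2\},\{0\})$, non-planar by Proposition~\ref{prop:nonplanargraphs}(vii). The mixed case — one side $\{s,2s\}$, the other $\{t,n/2\}$ — and the case where both are $\{s,2s\}$ with different $s$'s require checking that after simultaneous rescaling (legal only up to one common unit) the resulting index sets still force a forbidden minor; here I expect to contract the common short edges to reduce $n$ to $4$ and read off a $K_5$ or $K_{3,3}$ minor.

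The main obstacle I anticipate is the bookkeeping in the mixed/both-short cases, since one is only allowed a single global multiplication by a unit mod $n$: after normalizing $\mathcal{A}=\{1,2\}$ (say), the set $\mathcal{B}$ may be $\{t,2t\}$ or $\{t,n/2\}$ for an $t$ that cannot be further normalized, and one must verify that a forbidden minor survives for every such $t$ and every $\mathcal{Q}$ with $1\leq|\mathcal{Q}|\leq 3$ and $0\in\mathcal{Q}$. The uniform way around this is: contract all edges $(v_i,v_{i+1})$ and $(v_i,v_i')$ (the latter legitimate since $0\in\mathcal{Q}$) — this identifies all $v_i$ and all $v_i'$ respectively to single classes only if $(n,1)=1$, which holds, and in general collapses $\Gamma$ to a graph on few vertices in which the surviving edges coming from the second chord of $\mathcal{B}$, the remaining element(s) of $\mathcal{Q}$, and the second chord of $\mathcal{A}$ produce parallel classes of edges realizing $K_5$ or $K_{3,3}$; one then only needs the rank of the resulting "quotient chord pattern", which is a short finite check. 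I would organize the final write-up as: (1) normalize $0\in\mathcal{Q}$; (2) apply Corollary~\ref{cor:planarcirculant} to pin down the two shapes; (3) a short subsection per shape-pair, each reducing (by contraction) to one of the named non-planar graphs of Proposition~\ref{prop:nonplanargraphs} or to an explicit $K_5$/$K_{3,3}$.
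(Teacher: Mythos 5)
Your overall skeleton (pin down the possible shapes of $\mathcal{A}$ and $\mathcal{B}$ via Corollary~\ref{cor:planarcirculant}, then reduce by contractions to the named non-planar graphs of Proposition~\ref{prop:nonplanargraphs}(iii),(iv),(v),(vii)) matches the spirit of the paper, and your treatment of $|\mathcal{Q}|>1$ by containment of a $|\mathcal{Q}|=1$ subgraph is exactly what is needed. But the proposal has a genuine gap precisely where you flag the ``main obstacle'': the mixed cases (one of $\mathcal{A},\mathcal{B}$ of shape $\{s,\pm 2s\}$, the other $\{t,n/2\}$, or both of shape $\{s,\pm 2s\}$ but with unrelated $s$'s) are never actually eliminated. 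Your ``uniform way around this'' --- contracting all edges $(v_i,v_{i+1})$ and all edges $(v_i,v_i')$ --- cannot work: those contractions identify all the $v_i$ to a single vertex and then all the $v_i'$ with it, so the quotient has at most two vertices, and every multigraph on at most two vertices is planar; no $K_5$ or $K_{3,3}$ minor can be ``read off'' from such a quotient (a minor of $\Gamma$ must be a minor of any graph obtained from $\Gamma$ by contractions, not the other way round). Likewise, contracting the length-$1$ edges ``outside a small window'' to reach $\Gamma_4(\{1,2\},\mathcal{B}',\mathcal{Q}')$ is not controlled in the mixed cases, because the $v'$-side chords and the $\mathcal{Q}$-differences are not respected by a contraction performed only on the $v$-side; and your normalisation of a generator of $\mathcal{A}$ to $1$ or $2$ by multiplying by a unit silently assumes that generator is coprime to $n$, which is not given at that stage.

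The missing idea, which is how the paper proceeds, is to constrain $\mathcal{A}$ and $\mathcal{B}$ \emph{jointly} before any shape analysis: with $\mathcal{Q}=\{q\}$, contract only the edges $(v_i,v_{i+q}')$; this collapses $\Gamma$ onto the circulant $\mathrm{circ}_n(\mathcal{A}\cup\mathcal{B})$, which must be planar if $\Gamma$ is. If some $b_j\not\equiv\pm a_1,\pm a_2$~mod~$n$, then $\mathrm{circ}_n(\mathcal{A}\cup\{b_j\})$ is a properly given circulant with three chord classes, hence non-planar by Corollary~\ref{cor:planarcirculant}. So necessarily $\mathcal{B}=\{\pm a_1,\pm a_2\}$, and the mixed/unrelated-$s$ cases you were worried about simply do not occur. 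After that, $\mathrm{circ}_n(\mathcal{A})$ connected (connectivity is inherited by the contraction) and planar leaves only $\mathcal{A}=\{a,\pm 2a\}$ with $(n,a)=1$, $n$ even, or $\mathcal{A}=\{a,n/2\}$ with $n\equiv 2$~mod~$4$, $a$ even, $(n/2,a)=1$, and these give graphs isomorphic to $\Gamma_n(\{1,2\},\{1,2\},\{0\})\supseteq\Gamma_n(\{1,2\},\{1\},\{0\})$ and $\Gamma_n(\{2,n/2\},\{2,n/2\},\{q\})$, killed by Proposition~\ref{prop:nonplanargraphs}(v) and (vii) respectively (and here the coprimality needed for your rescaling is available). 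Without this joint-contraction step, or some replacement for it, your case analysis does not close.
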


\begin{proof}
Let $\mathcal{A}=\{a_1,a_2\}$, $\mathcal{B}=\{b_1,b_2\}$. Since $\Gamma$ is properly given we have $a_1\not \equiv \pm a_2$, $b_1\not \equiv \pm b_2$. Suppose $|\mathcal{Q}|=1$, that is, $\mathcal{Q}=\{q\}$. Suppose for contradiction that $\Gamma$ is connected and planar. Contracting the edges $(v_i,v_{i+q}')$ contracts $\Gamma$ to the connected, planar, circulant graph $\Lambda=\mathrm{circ}_n(\mathcal{A}\cup \mathcal{B})$. If for any $j\in \{1,2\}$ we have $b_j\not \in \{\pm a_1, \pm a_2\}$ then the subgraph $\mathrm{circ}_n(\mathcal{A}\cup \{b_j\})$ of $\Lambda$ is properly given and non-planar by Corollary~\ref{cor:planarcirculant}. Therefore we may assume $\mathcal{B}=\{\pm a_1,\pm a_2\}$ and so $\Lambda=\mathrm{circ}_n(\mathcal{A})$, which is connected, planar, and properly given.

Suppose $n/2\not \in \mathcal{A}$. Then by Corollary~\ref{cor:planarcirculant} we have, without loss of generality, that $a_2\equiv \pm 2a_1$~mod~$n$, and since $\Lambda$ is connected we have $(n,a_1)=1$ and $n$ is even. Therefore $\Gamma$ is isomorphic to \linebreak $\Gamma_n(\{1,2\},\{1,2\},\{0\})$ which contains $\Gamma_n(\{1,2\},\{1\},\{0\})$, which is non-planar by Proposition~\ref{prop:nonplanargraphs}(v).
Suppose then that $n/2 \in \mathcal{A}$, $a_2=n/2$, say. Then $\mathcal{A}=\{a_1,n/2\}$ and since $\Lambda=\mathrm{circ}_n(\mathcal{A})$ is connected and planar, Theorem~\ref{thm:planarcirculant} gives that $(n/2,a_1)=1$, $n\equiv 2$~mod~$4$, and $a_1$ is even. Then $\Gamma$ is isomorphic to $\Gamma_n(\{2,n/2\},\{2,n/2\},\{q\})$ which is non-planar by Proposition~\ref{prop:nonplanargraphs}(vii). If $|\mathcal{Q}|>1$ then $\Gamma$ contains $\Gamma_n(\mathcal{A},\mathcal{B},\{q\})$ which (as we have just shown) is non-planar.
\end{proof}

Consider now the case $\mathcal{A}=\mathcal{B}=\emptyset$.

\begin{lemma}\label{lem:A=B=empty}
Let $\mathcal{A}=\mathcal{B}=\emptyset$, $n\geq 5$, $\mathcal{Q}\neq \emptyset$. Then $\Gamma=\Gamma_n(\mathcal{A},\mathcal{B},\mathcal{Q})$ is connected and planar if and only if either
\begin{itemize}
  \item[(II.5)] $\mathcal{Q}=\{q,q+s\}$, for some $q$, where $(n,s)=1$; or
  \item[(II.6)] $\mathcal{Q}=\{q,q+s,q-s\}$, for some $q$, where $n$ is even and $(n,s)=1$.
\end{itemize}
\end{lemma}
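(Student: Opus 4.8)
The graph $\Gamma=\Gamma_n(\emptyset,\emptyset,\mathcal{Q})$ is bipartite with parts $\{v_0,\dots,v_{n-1}\}$ and $\{v_0',\dots,v_{n-1}'\}$, each vertex $v_i$ joined exactly to the vertices $v_{i+q}'$ for $q\in\mathcal{Q}$; so $\Gamma$ is a bipartite circulant (a ``bi-circulant'') of bidegree $|\mathcal{Q}|$. The plan is to work one value of $|\mathcal{Q}|$ at a time, using the normalisation from the proof of Lemma~\ref{lem:nonplanarsizesPART1}: after the substitution $w_i=v_i$, $w_i'=v_{i+q_0}'$ we may assume $0\in\mathcal{Q}$ and write $\mathcal{Q}=\{0\}\cup\mathcal{Q}_0$. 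Connectedness is controlled by Lemma~\ref{lem:connectedGamma}: with $q_0=0$ the relevant gcd is $d=\gcd(n,q\ (q\in\mathcal{Q}))$, so $\Gamma$ is connected iff $\gcd\bigl(n,\{q:q\in\mathcal{Q}\}\bigr)=1$.

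First the easy ranges. If $|\mathcal{Q}|=1$ then $\Gamma$ is a disjoint union of $n$ edges — disconnected since $n\ge 5$ — so this case contributes nothing. If $|\mathcal{Q}|\ge 4$ then $\Gamma$ is non-planar by Lemma~\ref{lem:nonplanarsizesPART1} (applied with $\mathcal{A}=\mathcal{B}=\emptyset$), so this case is excluded too. Thus the whole content is in $|\mathcal{Q}|=2$ and $|\mathcal{Q}|=3$.

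For $|\mathcal{Q}|=2$, normalise to $\mathcal{Q}=\{0,s\}$ with $1\le s\le n-1$. Contracting the edges $(v_i,v_i')$ collapses $\Gamma$ to the circulant $\mathrm{circ}_n(\{s\})$ (a single $n$-cycle when $\gcd(n,s)=1$), which is always planar, so contraction alone gives no obstruction; instead observe directly that $\Gamma$ with $\mathcal{Q}=\{0,s\}$, $\gcd(n,s)=1$, is just the cycle $C_{2n}$ on $v_0,v_s',v_0\leftarrow$ — more precisely it is the $2n$-cycle $v_0\,v_0'\,v_{-s}\,v_{-s}'\,v_{-2s}\cdots$, hence planar; when $\gcd(n,s)=d>1$ it is $d$ disjoint cycles, hence disconnected. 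So $|\mathcal{Q}|=2$ contributes exactly case (II.5) with the condition $(n,s)=1$, matching the statement.

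For $|\mathcal{Q}|=3$, normalise to $\mathcal{Q}=\{0,q,r\}$ with $0,q,r$ distinct mod $n$. Contracting $(v_i,v_i')$, resp.\ $(v_i,v_{i+q}')$, resp.\ $(v_i,v_{i+r}')$, produces the circulants $\mathrm{circ}_n(\{q,r\})$, $\mathrm{circ}_n(\{-q,r-q\})$, $\mathrm{circ}_n(\{-r,q-r\})$; if any of these is properly given (i.e.\ its two jumps are not $\equiv$ each other up to sign) and not covered by Corollary~\ref{cor:planarcirculant}(i)--(iii), then $\Gamma$ is non-planar. Pushing through the three properly-given conditions $q\not\equiv -r$, $r\not\equiv 2q$, $q\not\equiv 2r$ (all mod $n$) and the surviving small-degree possibilities allowed by Corollary~\ref{cor:planarcirculant} should force, after elimination, the symmetric configuration $\mathcal{Q}=\{q,q+s,q-s\}$. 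Precisely: if all three contracted circulants fail to be properly given then $\{q,r,0\}$ must be closed under negation about some point, which after re-normalising the base point is $\{0,s,-s\}$ up to translation, i.e.\ $\mathcal{Q}=\{q,q+s,q-s\}$; the remaining constraints then are $\gcd(n,s)=1$ (connectedness, via Lemma~\ref{lem:connectedGamma}) and, to rule out $\Gamma_n(\emptyset,\emptyset,\{0,n/4,n/2,3n/4\})$-type degenerations, the parity condition that $n$ be even. That $\Gamma_n(\emptyset,\emptyset,\{q,q+s,q-s\})$ with $n$ even and $(n,s)=1$ \emph{is} planar one checks by exhibiting the explicit embedding in Figure~\ref{fig:(II.6)} (equivalently, via the isomorphism noted in the introduction, $\Gamma_n(\emptyset,\emptyset,\{q,q+s,q-s\})\cong\Gamma_n(\{s\},\{s\},\{q\})$ when $n$ is even and $(n,s)=1$, and the latter is manifestly planar of type (I.3)). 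Conversely, when $n$ is odd with $(n,s)=1$, the graph contains a subdivision of the graph in Proposition~\ref{prop:nonplanargraphs}(i) (by contracting edges to reduce to $\Gamma_3(\emptyset,\emptyset,\{0,1,-1\})=K_{3,3}$), so is non-planar.

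\textbf{Main obstacle.} The delicate part is the $|\mathcal{Q}|=3$ case: organising the case split on which of the three contracted circulants $\mathrm{circ}_n(\{q,r\})$, $\mathrm{circ}_n(\{-q,r-q\})$, $\mathrm{circ}_n(\{-r,q-r\})$ are properly given, and then — in the remaining ``all three are degenerate'' branch — showing that the only surviving possibilities are the symmetric set $\{q,q+s,q-s\}$ together with the three genuinely non-planar exceptional sets $\{0,n/2,\pm n/4\}$, $\{0,n/2,\pm n/3\}$, $\{0,n/2,\pm n/6\}$ handled by Proposition~\ref{prop:nonplanargraphs}(viii)--(x). This is essentially the same bookkeeping as in Lemma~\ref{lem:nonplanarsizesPART1} but one degree lower, so I would try to quote that lemma's computation wherever possible rather than redo it; the residual work is to verify that the symmetric configuration, which escapes all the non-planarity tests, really does embed in the plane, which the figure (or the circulant isomorphism) supplies.
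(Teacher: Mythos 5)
Your overall skeleton matches the paper's: dispose of $|\mathcal{Q}|=1$ (disconnected) and $|\mathcal{Q}|\geq 4$ (Lemma~\ref{lem:nonplanarsizesPART1}), identify the $|\mathcal{Q}|=2$ graph as a union of $(n,s)$ cycles, and for $|\mathcal{Q}|=3$ contract the three perfect matchings $(v_i,v_{i+q}')$ to get three circulants with two-element connection sets and invoke Heuberger. But the heart of the $|\mathcal{Q}|=3$ case is left as ``should force, after elimination,'' and the case split you propose does not actually cover it. Your dichotomy is: (a) some contracted circulant is properly given and falls outside Corollary~\ref{cor:planarcirculant}, hence $\Gamma$ is non-planar; (b) all three fail to be properly given, which you claim yields the symmetric set $\{q,q+s,q-s\}$. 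This is wrong in both directions. First, the symmetric configuration corresponds to \emph{at least one} of the three circulants failing to be properly given (namely $s\equiv -r$, or $r\equiv 2s$, or $s\equiv 2r$); all three fail simultaneously only in the very special situation $3r\equiv 0$, $s\equiv -r$, so branch (b) as you state it misses most symmetric cases. Second, and more seriously, the genuinely hard branch --- all three circulants properly given, yet each one possibly planar because two-element circulants \emph{can} be planar (the $\pm 2s$ and $\{s,n/2\}$ cases of Theorem~\ref{thm:planarcirculant}) --- is not addressed at all. The paper resolves exactly this: if $n/2\in\{r,s\}$ one gets a parity contradiction by applying Theorem~\ref{thm:planarcirculant} to $C^{(0)}$ and to $C^{(s)}$ (or $C^{(r)}$); otherwise planarity of $C^{(0)}$ forces $s\equiv -2r$ or $r\equiv -2s$, whereupon $C^{(s)}=\mathrm{circ}_n(\{2r,3r\})$ or $C^{(r)}=\mathrm{circ}_n(\{2s,3s\})$, which is not connected and planar for $n\geq 5$. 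Without some such argument your proof does not exclude non-symmetric three-element sets $\mathcal{Q}$.

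Two further points. The exceptional sets you invoke from Proposition~\ref{prop:nonplanargraphs}(viii)--(x) are four-element sets $\{0,n/4,n/2,3n/4\}$, $\{0,n/3,n/2,2n/3\}$, $\{0,n/6,n/2,5n/6\}$; they belong to the $|\mathcal{Q}|=4$ analysis of Lemma~\ref{lem:nonplanarsizesPART1} and play no role when $|\mathcal{Q}|=3$, so ``quoting that lemma's computation'' cannot replace the elimination above (there the contracted circulants have three-element connection sets and are automatically non-planar once properly given; here they have two-element sets and are not). Likewise, the parity condition ``$n$ even'' in the symmetric case has nothing to do with those sets: the correct reason, which you do state later, is that for $(n,s)=1$ the graph is isomorphic to $\Gamma_n(\emptyset,\emptyset,\{0,1,-1\})$, which for odd $n$ is the graph of Proposition~\ref{prop:nonplanargraphs}(i) and hence non-planar, while for even $n$ the explicit embedding (Figure~\ref{fig:(II.6)}) gives planarity.
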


\begin{proof}
If $|\mathcal{Q}|=1$ then $\Gamma$ is disconnected. If $|\mathcal{Q}|>3$ then $\Gamma$ is non-planar by Lemma~\ref{lem:nonplanarsizesPART1}.

Suppose that $|\mathcal{Q}|=2$, i.e. $\mathcal{Q}=\{q,q+s\}$ for some $q,s \in \{0,\ldots ,n-1\}$, $s\not \equiv 0$~mod~$n$.  For each $i\in \{0,\ldots ,n-1\}$ let $w_i=v_i$, $w_i'=v_{i+q}'$. Then $\Gamma$ has vertices $w_i,w_i'$ and edges $(w_i,w_i')$, $(w_i,w_{i+s}')$,  ($0\leq i\leq n-1$). Then $\Gamma$ consists of $(n,s)$ cycles $w_i'-w_i-w_{i+s}'-w_{i+s}-w_{i+2s}'-w_{i+2s}-\cdots - w_{i-s}'-w_{i-s}-w_i'$, so it is connected and planar (of type~(I.5)) if and only if $(n,s)=1$.

Now suppose that $|\mathcal{Q}|=3$ and let $\mathcal{Q}=\{p,p+s,p+r\}$ for some $p,r,s\in \{0,\ldots,n-1\}$, where $s\not \equiv 0$, $r\not \equiv 0$, $r\not \equiv s$~mod~$n$.

Suppose first that $s\equiv -r$ or $r\equiv 2s$ or $s\equiv 2r$~mod~$n$. If $s\equiv-r$ let $q=p,t=s$; if $r\equiv 2s$~mod~$n$ let $q=p+s,t=s$; if $s\equiv 2r$ let $q=p+r$, $t=r$. Then $\mathcal{Q}=\{q,q+t,q-t\}$. We claim that if $\Gamma$ is connected then it is isomorphic to $\Gamma_n\{\emptyset,\emptyset,\{0,1,-1\})$, so if $n$ is odd $\Gamma$ is non-planar by Proposition~\ref{prop:nonplanargraphs}(i), and if $n$ is even then $\Gamma$ is planar (of type~(II.6)). Now $\Gamma$ has vertices $v_i,v_i'$ and edges $(v_i,v_{i+q}')$, $(v_i,v_{i+q+t}')$, $(v_i,v_{i+q-t}')$. For each $0\leq i\leq n-1$ let $w_i=v_i$, $w_i'=v_{i+q}'$; then $\Gamma$ has vertices $w_i,w_i'$ and edges $(w_i,w_{i}')$, $(w_i,w_{i+t}')$, $(w_i,w_{i-t}')$. Therefore $\Gamma=\Gamma_n(\emptyset,\emptyset,\{0,t,-t\})$, which (by Lemma~\ref{lem:connectedGamma}) is connected if and only if $(n,t)=1$, in which case $\Gamma$ is isomorphic to $\Gamma_n\{\emptyset,\emptyset,\{0,1,-1\})$, as claimed.

Suppose then that $s\not \equiv -r$, $r\not \equiv 2s$, and $s\not \equiv 2r$~mod~$n$, and suppose that $\Gamma$ is connected and planar. For each $0\leq i\leq n-1$ let $w_i=v_i$, $w_i'=v_{i+p}'$. Then $\Gamma$ has edges $(w_i,w_i')$, $(w_i,w_{i+s}')$, $(w_i,w_{i+r}')$ so $\Gamma$ is isomorphic to $\Gamma_n(\emptyset,\emptyset,\{0,s,r\})$. Contracting the edges $(w_i,w_i')$ of $\Gamma$ and removing loops leaves the circulant $C^{(0)}=\mathrm{circ}_n(\{s,r\})$, which is properly given since $s\not \equiv \pm r$~mod~$n$. Contracting the edges $(w_i,w_{i+s}')$ of $\Gamma$ leaves the circulant $C^{(s)}=\mathrm{circ}_n(-s,r-s)$, which is properly given since $r\not \equiv 0$, $r\not \equiv 2s$~mod~$n$. Contracting the edges $(w_i,w_{i+r}')$ of $\Gamma$ leaves the circulant $C^{(r)}=\mathrm{circ}_n(-r,s-r)$, which is properly given since $s\not \equiv 0$, $s\not \equiv 2r$~mod~$n$.

If $s=n/2$ then $C^{(0)}=\mathrm{circ}_n(\{n/2,r\})$, which is connected and planar, so Theorem~\ref{thm:planarcirculant} gives that $(n/2,r)=1$, $n\equiv 2$~mod~$4$, and $r$ is even, so in particular $r-n/2$ is odd. In the same way $C^{(s)}=\mathrm{circ}_n(\{n/2,r-n/2\})$ is connected and planar so $(n/2,r-n/2)=1$, $n\equiv 2$~mod~$4$, and $r-n/2$ is even, a contradiction. If $r=n/2$ then $C^{(0)}=\mathrm{circ}_n(\{n/2,s\})$, which is connected and planar, so Theorem~\ref{thm:planarcirculant} gives that $(n/2,s)=1$, $n\equiv 2$~mod~$4$, and $s$ is even, so in particular $s-n/2$ is odd. In the same way $C^{(r)}=\mathrm{circ}_n(\{n/2,s-n/2\})$ is connected and planar so $(n/2,s-n/2)=1$, $n\equiv 2$~mod~$4$, and $s-n/2$ is even, a contradiction.
Thus we may assume that $n/2 \not \in \{r,s\}$. Then since $C^{(0)}$ is connected and planar, noting that $r\not \equiv 2s$, $s\not \equiv 2r$~mod~$n$, Theorem~\ref{thm:planarcirculant} gives that either $s\equiv -2r$ or $r\equiv -2s$~mod~$n$ and so $C^{(s)}=\mathrm{circ}_n(\{2r,3r\})$ and $C^{(r)}=\mathrm{circ}_n(\{2s,3s\})$, respectively, which are not connected and planar since $n\geq 5$, a contradiction.
\end{proof}

\begin{lemma}\label{lem:A=emptyset}
Let $\mathcal{A}=\emptyset$ or $\mathcal{B}=\emptyset$, $\mathcal{A}\cup \mathcal{B}\neq \emptyset$, $|\mathcal{Q}|=1$ or $2$, $n\geq 5$, $0\not \in \mathcal{A}$, $0\not \in \mathcal{B}$ and suppose that $\Gamma=\Gamma_n(\mathcal{A},\mathcal{B}, \mathcal{Q})$ is properly given. Then $\Gamma$ is connected and planar if and only if one of the following holds:
\begin{itemize}
  \item[(I.2)] $\{\mathcal{A},\mathcal{B}\}=\{\emptyset, \{\pm s\}\}$, $\mathcal{Q}=\{q,q+s\}$, for some $q$, $(n,s)=1$;
  \item[(I.4)] $\{\mathcal{A},\mathcal{B}\}=\{\emptyset,\{s\}\}$, $\mathcal{Q}=\{q\}$, for some $q$ where $(n,s)=1$;
  \item[(II.5)] $\{\mathcal{A},\mathcal{B}\}=\{\emptyset, \{s,\pm 2s\}\}$, $\mathcal{Q}=\{q\}$, for some $q$ where $n$ is even, $(n,s)=1$;
  \item[(II.8)] $\{\mathcal{A},\mathcal{B}\}=\{\emptyset, \{\pm 2s\}\}$, $\mathcal{Q}=\{q,q+s\}$, for some $q$ where $n$ is even, $(n,s)=1$;
  \item[(II.9)] $\{\mathcal{A},\mathcal{B}\}=\{\emptyset, \{\pm s\}\}$, $\mathcal{Q}=\{q,q+2s\}$, for some $q$ where $n$ is even, $(n,s)=1$;
  \item[(II.10)] $\{\mathcal{A},\mathcal{B}\}=\{\emptyset, \{\pm s, \pm 2s\}\}$, $\mathcal{Q}=\{q,q+s\}$, for some $q$, $n$ even, $(n,s)=1$;
  \item[(II.13)] $\{\mathcal{A},\mathcal{B}\}=\{\emptyset, \{\pm s, \pm 2s\}\}$, $\mathcal{Q}=\{q,q+2s\}$, for some $q$, $n$ even, $(n,s)=1$;
  \item[(III.2)] $\{\mathcal{A},\mathcal{B}\}=\{\emptyset, \{n/2, s\}\}$, $\mathcal{Q}=\{q,q+n/2\}$, for some $q$, where $n,s$ are even, $(n/2,s)=1$;
  \item[(III.3)] $\{\mathcal{A},\mathcal{B}\}=\{\emptyset, \{s\}\}$, $\mathcal{Q}=\{q,q+n/2\}$, for some $q$, where $n,s$ are even, $(n/2,s)=1$;
  \item[(III.9)] $\{\mathcal{A},\mathcal{B}\}=\{\emptyset, \{s,n/2\}\}$, $\mathcal{Q}=\{q\}$, for some $q$ where $n,s$ are even $(n/2,s)=1$;
  \item[(III.12)] $\{\mathcal{A},\mathcal{B}\}=\{\emptyset, \{n/2,\pm s\}\}$, $\mathcal{Q}=\{q,q+s\}$, for some $q$, where $n,s$ are even, $(n/2,s)=1$;
  \item[(III.14)] $\{\mathcal{A},\mathcal{B}\}=\{\emptyset, \{n/2\}\}$, $\mathcal{Q}=\{q,q+s\}$, for some $q$, where $n,s$ are even, $(n/2,s)=1$.
  \end{itemize}
\end{lemma}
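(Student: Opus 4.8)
The plan is to use the fact that, once we arrange $\mathcal{A}=\emptyset$, the vertices $v_0,\dots,v_{n-1}$ have degree $|\mathcal{Q}|\le 2$ and are joined only to the subgraph induced on $\{v_0',\dots,v_{n-1}'\}$, which is the circulant $\mathrm{circ}_n(\mathcal{B})$; so both connectedness and planarity of $\Gamma$ should reduce to the analogous questions for a circulant on $n$ vertices, answered by Theorem~\ref{thm:planarcirculant}, Corollary~\ref{cor:planarcirculant} and Lemma~\ref{lem:connectedcirc} (which is where the hypothesis $n\ge 5$ is used). First I would reduce to the case $\mathcal{A}=\emptyset$: the map $v_i\leftrightarrow v_i'$ gives an isomorphism $\Gamma_n(\mathcal{A},\mathcal{B},\mathcal{Q})\cong\Gamma_n(\mathcal{B},\mathcal{A},-\mathcal{Q})$, and both the hypotheses and the list of conclusions are symmetric in $\mathcal{A},\mathcal{B}$, with $\mathcal{Q}\mapsto-\mathcal{Q}$ only relabelling the parameter $q$. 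Hence we may take $\mathcal{A}=\emptyset$, so $\mathcal{B}\neq\emptyset$, and then $|\mathcal{B}|\in\{1,2\}$ by Lemma~\ref{lem:nonplanarsizesPART0}; after relabelling the $v_i'$ we may also assume $0\in\mathcal{Q}$, so $\mathcal{Q}=\{0\}$ or $\mathcal{Q}=\{0,t\}$ with $t\not\equiv 0$.

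The core of the proof is the following reduction. If $\mathcal{Q}=\{0\}$ then every $v_i$ is a pendant vertex attached to $v_i'$, so $\Gamma$ is connected (resp.\ planar) if and only if $\mathrm{circ}_n(\mathcal{B})$ is connected (resp.\ planar). If $\mathcal{Q}=\{0,t\}$ then every $v_i$ is a degree-$2$ vertex lying on the path from $v_i'$ to $v_{i+t}'$ through $v_i$; suppressing all the $v_i$ turns $\Gamma$ into $\mathrm{circ}_n(\mathcal{B}\cup\{t\})$, with a parallel edge wherever $t\equiv\pm b$ for some $b\in\mathcal{B}$. Suppression is obtained by an edge contraction, so preserves planarity; conversely $\Gamma$ is recovered by subdividing the ``$t$-edges'', and a path may always be drawn parallel to an existing edge of a plane graph, so $\Gamma$ is planar if and only if $\mathrm{circ}_n(\mathcal{B}\cup\{t\})$ is. Similarly, contracting the matching $\{(v_i,v_i')\}$ shows that $\Gamma$ is connected if and only if $\mathrm{circ}_n(\mathcal{B}\cup\{t\})$ is, i.e.\ $\gcd(n,\mathcal{B}\cup\{t\})=1$ by Lemma~\ref{lem:connectedcirc}.

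It then remains to feed this into Theorem~\ref{thm:planarcirculant}/Corollary~\ref{cor:planarcirculant} and unwind. If $\mathcal{Q}=\{0\}$, the three forms of a connected planar $\mathrm{circ}_n(\mathcal{B})$ give exactly cases (I.4), (II.5), (III.9). If $\mathcal{Q}=\{0,t\}$ and $t\not\equiv\pm b$ for every $b\in\mathcal{B}$: when $|\mathcal{B}|=2$ the set $\mathcal{B}\cup\{t\}$ has three elements, no two congruent mod $n$ up to sign, so $\mathrm{circ}_n(\mathcal{B}\cup\{t\})$ is properly given with three connections and hence non-planar, so $\Gamma$ is non-planar; when $\mathcal{B}=\{b\}$, Corollary~\ref{cor:planarcirculant} forces $\{b,t\}=\{s,2s\}$ (with $n$ even, $(n,s)=1$) or $\{b,t\}=\{s,n/2\}$ (with $n,s$ even, $(n/2,s)=1$), and the two ways of assigning the roles of $b$ and $t$ give cases (II.8),(II.9) in the first instance and (III.3),(III.14) in the second. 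Finally, if $\mathcal{Q}=\{0,t\}$ with $t\equiv\pm b$ for some $b\in\mathcal{B}$ then $\mathcal{B}\cup\{t\}=\mathcal{B}$ as connection sets, so $\Gamma$ is connected and planar precisely when $\mathrm{circ}_n(\mathcal{B})$ is; this yields (I.2) when $|\mathcal{B}|=1$, and (II.10), (II.13), (III.2), (III.12) when $|\mathcal{B}|=2$, according to whether $\mathcal{B}=\{s,2s\}$ or $\{s,n/2\}$ and whether $t\equiv\pm s$ or $t\equiv 2s$ (resp.\ $t\equiv n/2$). In each surviving case I would normalise $s$ by rescaling indices by a unit mod $n$, translate $\{0,-t\}$ to $\{0,t\}$ where necessary, and note that the graph is genuinely planar because the associated circulant is planar by Theorem~\ref{thm:planarcirculant}.

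The main difficulty I expect is organisational rather than conceptual: matching each of the dozen surviving configurations to the correct line of the list requires care, particularly in checking that Heuberger's divisibility conditions ``$2d\mid s$ and $n/d\equiv 2\bmod 4$'' (with $d=(n/2,s)$) are equivalent to the conditions ``$n,s$ even and $(n/2,s)=1$'' used throughout Class~III, and in keeping the index-rescalings and $\mathcal{Q}$-translations consistent. A smaller point deserving an explicit sentence is the reversibility of ``suppression'' when a parallel edge is produced, namely the elementary fact that one may always insert a path parallel to an edge of a plane graph without destroying planarity.
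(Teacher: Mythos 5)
Your proposal is correct and follows essentially the same route as the paper: reduce to $\mathcal{A}=\emptyset$ by the symmetry swapping $v_i\leftrightarrow v_i'$, treat $|\mathcal{Q}|=1$ via $\mathrm{circ}_n(\mathcal{B})$, and for $|\mathcal{Q}|=2$ suppress the degree-two vertices $v_i$ to reduce connectedness and planarity of $\Gamma$ to those of $\mathrm{circ}_n(\mathcal{B}\cup\{t\})$, then enumerate via Heuberger's classification. Your explicit remarks on the parallel-edge/subdivision reversibility and the normalisation $0\in\mathcal{Q}$ are points the paper passes over silently, but they do not change the argument.
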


\begin{proof}
By setting $\mathcal{Q}'=\{n-q\ |\ q\in \mathcal{Q}\}$ we see that the graphs $\Gamma_n(\mathcal{A},\mathcal{B},\mathcal{Q})$, $\Gamma_n(\mathcal{B},\mathcal{A},\mathcal{Q}')$ are isomorphic, so without loss of generality we may assume that $\mathcal{A}=\emptyset$.

If $|\mathcal{Q}|=1$ then $\Gamma$ is connected and planar if and only if $\mathrm{circ}_n(\mathcal{B})$ is connected and planar, the conditions for which are given in Theorem~\ref{thm:planarcirculant}, and which correspond to cases (I.4),(II.5),(III.9).

If $|\mathcal{Q}|=2$, $\mathcal{Q}=\{q,q+t\}$, say, then each vertex $v_i$ is of degree~2. Replacing each length 2 path $v_i'-v_{i-q}-v_{i+t}'$ by an edge $(v_i',v_{i+t}')$ we see that $\Gamma$ is connected and planar if and only if $C=\mathrm{circ}_n(\mathcal{B} \cup \{t\})$ is connected and planar, so in particular $|\mathcal{B}|\leq 2$.
Suppose $|\mathcal{B}|=1$, $\mathcal{B}=\{b\}$, say, then $C=\mathrm{circ}_n(\{b,t\})$. If $b\equiv \pm t$ then $C$ is connected and planar if and only if $(n,t)=1$, giving case~(I.2). If $b\not \equiv \pm t, n/2$~mod~$n$ then $C$ is connected and planar if and only if $n$ is even and either $b\equiv \pm 2t$ and $(n,t)=1$ or $t\equiv \pm 2b$ and $(n,b)=1$, giving cases~(II.8),(II.9), respectively. If $b\equiv n/2$ then $C$ is connected and planar if and only if $t$ is even and $(n/2,t)=1$, giving case~(III.14). If $t\equiv n/2$ then $C$ is connected and planar if and only if $b$ is even and $(n/2,b)=1$, giving case (III.3).
Suppose $|\mathcal{B}|=2$; then since $C$ is planar we have $\pm t\in \mathcal{B}$, so $C=\mathrm{circ}_n(\{b,t\})$ and $\mathcal{B}=\{b,\pm t\}$, $b\not \equiv \pm t$ (since $\Gamma$ is properly given). If $b,t\not \equiv n/2$~mod~$n$ then either $b\equiv \pm 2t$ and $(n,t)=1$ or $t\equiv \pm 2b$ and $(n,b)=1$, giving cases~(II.10),(II.13), respectively. If $b\equiv n/2$~mod~$n$ then $t$ is even, $(n/2,t)=1$, giving case~(III.12); if $t\equiv n/2$~mod~$n$ then $b$ is even, $(n/2,b)=1$, giving case~(III.2).
\end{proof}

We now consider the case $|\mathcal{Q}|=3$.

\begin{lemma}\label{lem:|Q|=3}
Let $|\mathcal{Q}|=3$, $n\geq 5$, $\mathcal{A}\cup \mathcal{B} \neq \emptyset$, $0\not \in \mathcal{A},\mathcal{B}$ and suppose that $\Gamma=\Gamma_n(\mathcal{A},\mathcal{B},\mathcal{Q})$ is properly given. Then $\Gamma$ is connected and planar if and only if the following holds:
\begin{itemize}
 \item[(II.1)] $\{\mathcal{A},\mathcal{B}\} =\{ \{\pm s\}, \{\pm 2s\} \}$, $\mathcal{Q}=\{q,q+s,q-s\}$, for some $q$, and $n$ is even, $(n,s)=1$;
 \item[(II.2)] $\{\mathcal{A},\mathcal{B}\}=\{\emptyset, \{\pm s, \pm 2s\}\}$, $\mathcal{Q}=\{q,q+s,q-s\}$, for some $q$, and $n$ is even, $(n,s)=1$;
 \item[(II.3)] $\{\mathcal{A},\mathcal{B}\}=\{\emptyset,\{\pm s\}\}$, $\mathcal{Q}=\{q,q+s,q-s\}$, for some $q$, and $n$ is even, $(n,s)=1$;
 \item[(II.4)] $\{\mathcal{A},\mathcal{B}\}=\{\emptyset, \{\pm 2s\}\}$, $\mathcal{Q}=\{q,q+s,q-s\}$, for some $q$, and $n$ is even, $(n,s)=1$.
 \end{itemize}
\end{lemma}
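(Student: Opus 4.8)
The plan is to put $(\mathcal{A},\mathcal{B},\mathcal{Q})$ in a normal form and then carry out a finite case analysis governed by a small stock of ``obstruction'' minors. First I would use the isomorphism $\Gamma_n(\mathcal{A},\mathcal{B},\mathcal{Q})\cong\Gamma_n(\mathcal{B},\mathcal{A},\mathcal{Q}')$, $\mathcal{Q}'=\{n-q\mid q\in\mathcal{Q}\}$ (as in the proof of Lemma~\ref{lem:A=emptyset}), together with Lemmas~\ref{lem:nonplanarsizesPART0} and~\ref{lem:nonplanarsizesPART2}, to assume $|\mathcal{A}|\le|\mathcal{B}|\le 2$ and $(|\mathcal{A}|,|\mathcal{B}|)\ne(2,2)$; as $\mathcal{A}\cup\mathcal{B}\ne\emptyset$ this leaves the four cases $(|\mathcal{A}|,|\mathcal{B}|)\in\{(0,1),(0,2),(1,1),(1,2)\}$. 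Translating $\mathcal{Q}$ as in the proof of Lemma~\ref{lem:nonplanarsizesPART1} (replace each $v_i'$ by $v_{i+q_0}'$ for a fixed $q_0\in\mathcal{Q}$) I may also assume $0\in\mathcal{Q}$.

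The next step extracts three structural constraints from planarity of $\Gamma$. Because $\Lambda=\Gamma_n(\emptyset,\emptyset,\mathcal{Q})$ is a subgraph of $\Gamma$, arguing as in the $|\mathcal{Q}|=3$ part of the proof of Lemma~\ref{lem:A=B=empty} applied to each connected component of $\Lambda$ (via Lemma~\ref{lem:connectedGamma}, the components on few vertices being handled directly or by Lemma~\ref{lem:n=2or3}) shows that $\mathcal{Q}$ must be reindexable as $\{q,q+t,q-t\}$; a further translation gives $\mathcal{Q}=\{0,t,-t\}$ with $t\not\equiv 0,n/2$. Applying Lemma~\ref{lem:connectedGamma} to $\Lambda$, its components are isomorphic to $\Gamma_{n/(n,t)}(\emptyset,\emptyset,\{0,1,-1\})$, which by Proposition~\ref{prop:nonplanargraphs}(i) is non-planar unless $n/(n,t)$ is even; so planarity of $\Gamma$ forces $n/(n,t)$ (hence $n$) to be even. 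Finally, contracting all edges $(v_i,v_i')$ of $\Gamma$ yields the minor $\mathrm{circ}_n(\mathcal{A}\cup\mathcal{B}\cup\{t\})$, so by Corollary~\ref{cor:planarcirculant} the slopes $\mathcal{A}\cup\mathcal{B}\cup\{t\}$, reduced modulo sign, number at most two; with planarity of $\mathrm{circ}_n(\mathcal{A})$ and $\mathrm{circ}_n(\mathcal{B})$ this leaves only a short list of candidate triples $(\mathcal{A},\mathcal{B},t)$ in each of the four cases.

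I would then treat the four cases. In cases $(0,1)$ and $(0,2)$, planarity of the minor $\mathrm{circ}_n(\mathcal{B}\cup\{t\})$ leaves a finite list of candidates; for each I would either recognise the graph as one of types (II.2), (II.3), (II.4) — noting that, by Lemma~\ref{lem:connectedGamma}, connectedness is then equivalent to $(n,t)=1$, whence $n$ is even — or produce a non-planar minor: a properly given circulant on three slopes obtained by contracting a well-chosen class of $q$-edges (non-planar by Corollary~\ref{cor:planarcirculant}), or, when such a contraction collapses slopes for small $n$, a $K_{3,3}$ or $K_5$ subgraph, or the parity obstruction above. Case $(1,1)$, $\Gamma=\Gamma_n(\{a\},\{b\},\{0,t,-t\})$, splits according to whether $a\equiv\pm b$ or $t\in\{\pm a,\pm b\}$: if $a\equiv\pm b$ then $\Gamma\cong\Gamma_n(\{a\},\{a\},\{0,t,-t\})$ and non-planarity comes from Proposition~\ref{prop:nonplanargraphs}(ii) or~(iv) (according as $a\equiv\pm t$ or $a\equiv\pm 2t$) or from a circulant minor on three slopes; if $t\in\{\pm a,\pm b\}$, normalising $t=a$ and excluding the previous sub-case, planarity of $\mathrm{circ}_n(\{a,b\})$ leaves $b\equiv\pm 2t$ — which yields exactly type (II.1), connected iff $(n,t)=1$, and then $n$ even — or $t\equiv\pm 2b$ or $b=n/2$, the latter two being killed by a circulant minor on three slopes or the parity obstruction. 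Case $(1,2)$ is empty: restricting $\mathcal{B}$ to each of its two singletons produces a case-$(1,1)$ graph, which must be of type (II.1), forcing the two elements of $\mathcal{B}$ into $\{t,-t\}$ or into $\{2t,-2t\}$, contrary to $\Gamma$ being properly given.

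The main obstacle is the bookkeeping of case $(1,1)$: several candidate triples do not reduce to a graph literally in Proposition~\ref{prop:nonplanargraphs}, so one must find, for each, the correct sequence of edge-contractions displaying a properly given circulant on three slopes, and must separately dispose of the few small $n$ (essentially $n=5$ and $n=6$) at which those contractions degenerate — which is exactly where the parity obstruction ``$n/(n,t)$ even'' is used. By contrast, verifying that types (II.1)--(II.4) really are connected and planar is routine: one checks connectedness via Lemma~\ref{lem:connectedGamma} and exhibits the planar embeddings (the figures referenced in the introduction). So the substance of the argument lies in eliminating every other candidate.
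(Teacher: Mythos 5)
Your overall strategy is viable, and its first stage coincides with the paper's: both arguments use the subgraph $\Gamma_n(\emptyset,\emptyset,\mathcal{Q})$ together with (the argument of) Lemma~\ref{lem:A=B=empty} to force $\mathcal{Q}=\{q,q+t,q-t\}$. After that, however, the paper uses a sharper contraction than yours: it contracts the edge class $(v_i,v_{i+q-t}')$, so that the two \emph{remaining} $\mathcal{Q}$-classes contribute slopes $t$ \emph{and} $2t$, giving the connected planar circulant minor $\mathrm{circ}_n(\mathcal{A}\cup\mathcal{B}\cup\{t,2t\})$. One application of Theorem~\ref{thm:planarcirculant} then forces $\mathcal{A},\mathcal{B}\subseteq\{\pm t,\pm 2t\}$, $(n,t)=1$ and $n$ even simultaneously, and all that remains is to forbid $\pm t$ or $\pm 2t$ lying in $\mathcal{A}\cap\mathcal{B}$ via Proposition~\ref{prop:nonplanargraphs}(ii),(iv); there is no case split on $(|\mathcal{A}|,|\mathcal{B}|)$ at all. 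You contract the middle class $(v_i,v_i')$, which only yields $\mathrm{circ}_n(\mathcal{A}\cup\mathcal{B}\cup\{t\})$, and you then have to recover the lost information through the four-way size analysis, the parity obstruction on the components of $\Gamma_n(\emptyset,\emptyset,\{0,t,-t\})$, and ad hoc three-slope circulant minors for the residual candidates (e.g.\ $t\equiv\pm 2b$, $b\equiv n/2$). Having checked representative eliminations, this programme does close up, but it is substantially longer than the paper's proof, and it is sensitive to degenerate congruences ($3t\equiv 0$, $5b\equiv 0$, $6b\equiv 0$, components of size $3$ where Proposition~\ref{prop:nonplanargraphs}(i) does not literally apply and one needs the $K_{3,3}$ observation) that you only gesture at.

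The one step that does not stand as written is your disposal of the case $(|\mathcal{A}|,|\mathcal{B}|)=(1,2)$. You restrict $\mathcal{B}$ to each of its singletons and invoke your case-$(1,1)$ conclusion (``must be of type (II.1)''), but that conclusion was established only for \emph{connected} graphs, and the subgraph $\Gamma_n(\{a\},\{b_i\},\{0,t,-t\})$ of a connected $\Gamma$ need not be connected (its parameter gcd can exceed $1$ even though the gcd for the full graph is $1$). The fix is available with your own tools -- pass to the components via Lemma~\ref{lem:connectedGamma}, which are connected planar graphs of the same shape over $n/d$, apply the $(1,1)$ classification (or Lemma~\ref{lem:n=2or3} when $n/d\leq 4$) there, and observe that the congruences $a\equiv\pm t,\ b_i\equiv\pm 2t$ (or vice versa) modulo $n/d$ lift to congruences modulo $n$, so $b_1\equiv\pm b_2$ contradicts $\Gamma$ being properly given -- but this componentwise lifting must be said explicitly, since as stated the reduction is unjustified. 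Note that the paper never meets this difficulty, because its choice of contraction eliminates the $|\mathcal{B}|=2$ possibility before any size split is made.
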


\begin{proof}
Suppose that $\Gamma=\Gamma_n(\mathcal{A},\mathcal{B},\mathcal{Q})$ is connected and planar and let $\mathcal{Q}=\{p,p+s,p+r\}$ where $p,q,r\in\{0,\ldots , n-1\}$, $s\not \equiv 0$, $r\not \equiv 0$, $r\not \equiv s$~mod~$n$. If $s\not \equiv -r$, $r\not \equiv 2s$, and $s\not \equiv 2r$~mod~$n$ then $\mathcal{Q}\neq \{q,q+t,q-t\}$ for any $q,t$ so the subgraph $\Gamma_n(\emptyset,\emptyset,\mathcal{Q})$ is non-planar by Lemma~\ref{lem:A=B=empty}.

Thus we may assume that $s\equiv -r$ or $r\equiv 2s$ or $s\equiv 2r$~mod~$n$ and set $q=p,t=s$ (when $s\equiv -r$), $q=p+s, t=s$ (when $r\equiv 2s$), and $q=p+r, t=r$ (when $s\equiv 2r$). Then $\mathcal{Q}=\{q,q+t,q-t\}$. Contracting the edges $(v_i,v_{i+q-t}')$ leaves the connected, planar, circulant graph $\Lambda=\mathrm{circ}_n(S)$ where $S=\mathcal{A}\cup \mathcal{B}\cup \{t,2t\}$. By Theorem~\ref{thm:planarcirculant} we have $\mathcal{A},\mathcal{B}\subseteq \{\pm t,\pm 2t\}$ and then $\Lambda$ has $(n,t)$ connected components so $(n,t)=1$ and $n$ is even.

If $\pm t\in \mathcal{A}\cap \mathcal{B}$ then $\Gamma$ contains the subgraph $\Gamma_n(\{t\},\{t\},\{q,q+t,q-t\})$ which is isomorphic to $\Gamma_n(\{1\},\{1\},\{0,1,n-1\})$ which is non-planar by Proposition~\ref{prop:nonplanargraphs}(ii).
If $\pm 2t\in \mathcal{A}\cap \mathcal{B}$ then $\Gamma$ contains the subgraph $\Gamma_n(\{2t\},\{2t\},\{q,q+t,q-t\})$ which is isomorphic to $\Gamma_n(\{2\},\{2\},\{0,1,-1\})$, which contains the subgraph $\Gamma_n(\{2\},\{2\},\{0,1\})$ which is non-planar by Proposition~\ref{prop:nonplanargraphs}(iv). Thus if $\mathcal{A}\neq \emptyset$ and $\mathcal{B}\neq \emptyset$ then $\{\mathcal{A},\mathcal{B}\}= \{\{\pm t \},\{\pm 2t \}\}$ so $\Gamma$ is isomorphic to $\G_n\{ \{1\}, \{2\}, \{q,q+1,q-1\}\}$ which is planar (of type~(II.1)) since~$n$ is even, and is connected by Lemma~\ref{lem:connectedGamma}. Therefore we may assume that $\{\mathcal{A},\mathcal{B}\}=\{\emptyset, \mathcal{X}\}$ where $\mathcal{X}\subseteq \{\pm t, \pm 2t\}$.  If $\mathcal{X}=\{\pm t,\pm 2t\}, \{\pm t\}, \{\pm 2t\}$ then we get cases (II.2),(II.3),(II.4), respectively.
\end{proof}

By Lemmas~\ref{lem:n=2or3}--\ref{lem:|Q|=3} the connected and planar graphs $\Gamma_n(\mathcal{A},\mathcal{B},\mathcal{Q})$ have been classified except when all of the following hold: $n\geq 5$ and $1\leq |\mathcal{Q}|\leq 2, 2\leq |\mathcal{A}|+|\mathcal{B}| \leq 3$, $\mathcal{A}\neq \emptyset$, $\mathcal{B}\neq \emptyset$. We deal with these cases in the following four lemmas. For the case $|\mathcal{A}|+|\mathcal{B}| = 3$, $|\mathcal{Q}|=2$; that is, for the case $\{|\mathcal{A}|,|\mathcal{B}|\}=\{2,1\}$, $|\mathcal{Q}|=2$ we have the following.

\begin{lemma}\label{lem:|A|=2|B|=1|Q|=2}
Let $\{|\mathcal{A}|,|\mathcal{B}|\}=\{2,1\}$, $0\not \in \mathcal{A},\mathcal{B}$, $|\mathcal{Q}|=2$, $q\in \mathcal{Q}$, $n\geq 5$ and suppose that $\Gamma_n(\mathcal{A},\mathcal{B},\mathcal{Q})$ is properly given. Then $\Gamma$ is connected and planar if and only if one of the following holds:
\begin{itemize}
  \item[(II.12)] $\{\mathcal{A},\mathcal{B}\}=\{ \{\pm s,\pm 2s\}, \{\pm 2s\}\}$, $\mathcal{Q}=\{q,q+2s\}$, $n$ is even, $(n,s)=1$;
  \item[(III.1)] $\{\mathcal{A},\mathcal{B}\}=\{ \{ s,n/2\}, \{n/2\}\}$, $\mathcal{Q}=\{q,q+n/2\}$, $(n/2,s)=1$, $s$ is even;
  \item[(III.7)] $\{\mathcal{A},\mathcal{B}\}=\{ \{\pm s,n/2\}, \{\pm s\}\}$, $\mathcal{Q}=\{q,q+s\}$, $(n/2,s)=1$, $s$ is even;
  \item[(III.11)] $\{\mathcal{A},\mathcal{B}\}=\{ \{ \pm s,n/2\}, \{n/2\}\}$, $\mathcal{Q}=\{q,q+s\}$, $(n/2,s)=1$, $s$ is even.
\end{itemize}
\end{lemma}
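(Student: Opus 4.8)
The plan is to reduce to the case $|\mathcal{A}|=2$, $|\mathcal{B}|=1$, put $\mathcal{Q}$ and $\mathcal{A}$ into a normal form, contract $\Gamma$ to a circulant graph whose planarity is pinned down by Theorem~\ref{thm:planarcirculant}, and then run a short case analysis in which the non-planar graphs of Proposition~\ref{prop:nonplanargraphs} serve as forbidden subgraphs. The ``if'' direction is quick: in each of (II.12),(III.1),(III.7),(III.11) the stated arithmetic conditions force the relevant greatest common divisor in Lemma~\ref{lem:connectedGamma} to equal $1$, so $\Gamma$ is connected, and a planar embedding is displayed in the corresponding figure (Figures~\ref{fig:(II.12)},~\ref{fig:(III.1)},~\ref{fig:(III.7)},~\ref{fig:(III.11)}).

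For the ``only if'' direction I would first normalise. By the isomorphism $\Gamma_n(\mathcal{A},\mathcal{B},\mathcal{Q})\cong\Gamma_n(\mathcal{B},\mathcal{A},\{n-q\ |\ q\in\mathcal{Q}\})$ used in the proof of Lemma~\ref{lem:A=emptyset} I may assume $|\mathcal{A}|=2$, $|\mathcal{B}|=1$; write $\mathcal{A}=\{a_1,a_2\}$ with $a_1\not\equiv\pm a_2$, $\mathcal{B}=\{b\}$, and (after the substitution $w_i=v_i$, $w_i'=v_{i+q_0}'$ for a chosen $q_0\in\mathcal{Q}$, exactly as in the proof of Lemma~\ref{lem:nonplanarsizesPART1}) $\mathcal{Q}=\{0,t\}$ with $t\not\equiv 0$~mod~$n$. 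Suppose $\Gamma$ is connected and planar. Contracting the edges $(v_i,v_i')$ produces a connected planar graph whose underlying simple graph is the circulant $\mathrm{circ}_n(\mathcal{A}\cup\mathcal{B}\cup\{t\})$; by Corollary~\ref{cor:planarcirculant} the properly given reduction of the shift set $\mathcal{A}\cup\mathcal{B}\cup\{t\}$ has at most two nonzero elements up to sign, and since $a_1\not\equiv\pm a_2$ already give two of these while $b,t\not\equiv 0$, necessarily $b\equiv\pm a_i$ and $t\equiv\pm a_j$ for some $i,j$. Hence the contracted circulant is just $\mathrm{circ}_n(\{a_1,a_2\})$, which is connected (so $(n,a_1,a_2)=1$), properly given, and planar with two distinct slopes. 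Theorem~\ref{thm:planarcirculant} then leaves exactly the two possibilities, mutually exclusive because $n/2$ is odd in the second,
\begin{itemize}
  \item[(ii)] $\{a_1,a_2\}=\{s,\pm 2s\}$ for some $s$ with $(n,s)=1$ and $n$ even;
  \item[(iii)] $\{a_1,a_2\}=\{s,n/2\}$ for some even $s$ with $(n/2,s)=1$ and $n\equiv 2$~mod~$4$.
\end{itemize}

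In case~(ii), $s$ is a unit modulo $n$, so the scaling $v_i\mapsto v_{s^{-1}i}$, $v_i'\mapsto v_{s^{-1}i}'$ is an isomorphism onto $\Gamma_n(s^{-1}\mathcal{A},s^{-1}\mathcal{B},s^{-1}\mathcal{Q})$, and after it I may assume $\mathcal{A}=\{1,2\}$, $\mathcal{B}=\{\beta\}$ with $\beta\in\{1,2\}$, and $\mathcal{Q}$ of the form $\{q,q+1\}$ or $\{q,q+2\}$ (using $b,t\in\{\pm a_1,\pm a_2\}$ and that $\mathcal{B}$ and $\mathcal{Q}$ are unchanged when $b$ or $t$ is replaced by its negative). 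If $\mathcal{Q}=\{q,q+1\}$ then $\Gamma$ contains $\Gamma_n(\{1,2\},\{1\},\{0\})$ (when $\beta=1$) or $\Gamma_n(\{2\},\{2\},\{0,1\})$ (when $\beta=2$), non-planar by Proposition~\ref{prop:nonplanargraphs}(v),(iv); if $\mathcal{Q}=\{q,q+2\}$ and $\beta=1$ then $\Gamma$ contains $\Gamma_n(\{1,2\},\{1\},\{0\})$, non-planar by Proposition~\ref{prop:nonplanargraphs}(v). So $\beta=2$, $\mathcal{Q}=\{q,q+2\}$, and $\Gamma$ is of type~(II.12). In case~(iii) one has $s\neq n/2$ (otherwise $4\mid n$), so set $a_1=s$, $a_2=n/2$; then $b$ has edge type $[s]$ or $[n/2]$ and $t\equiv\pm s$ or $t\equiv n/2$, giving four graphs, three of which are of types (III.7) (when both $b$ and $t$ correspond to $s$), (III.11) (when $b$ corresponds to $n/2$ and $t$ to $s$) and (III.1) (when both correspond to $n/2$). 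The remaining graph $\Gamma_n(\{s,n/2\},\{s\},\{0,n/2\})$ contains $\Gamma_n(\{s\},\{s\},\{0,n/2\})$; since $(n/2,s)=1$ and $s$ is even, $s/2$ is a unit modulo $n/2$, so there is a unit $u$ modulo $n$ with $us\equiv 2$ and $u(n/2)\equiv n/2$, whence $\Gamma_n(\{s\},\{s\},\{0,n/2\})\cong\Gamma_n(\{2\},\{2\},\{0,n/2\})$, which is non-planar by Proposition~\ref{prop:nonplanargraphs}(vi). So the remaining case cannot occur, and the classification is complete.

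The main obstacle I expect is the bookkeeping rather than any single hard idea: one must check that each reduction used (the $\mathcal{A}\leftrightarrow\mathcal{B}$ duality, the translation $\mathcal{Q}\mapsto\mathcal{Q}-q_0$, the unit scaling, and replacing $b$ or $t$ by its negative) is a genuine graph isomorphism, and that cases~(ii) and~(iii) coming from the circulant classification are both exhaustive and disjoint. The most delicate single step is the arithmetic showing that the unwanted sub-case of~(iii) is isomorphic to $\Gamma_n(\{2\},\{2\},\{0,n/2\})$, which rests on $s/2$ being invertible modulo $n/2$.
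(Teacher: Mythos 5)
Your argument is correct and follows essentially the same route as the paper's proof: reduce to $|\mathcal{A}|=2$, $|\mathcal{B}|=1$ by the $\mathcal{A}\leftrightarrow\mathcal{B}$ duality, contract the $\mathcal{Q}$-edges to a circulant whose planarity is settled by Theorem~\ref{thm:planarcirculant}, and eliminate the bad subcases with Proposition~\ref{prop:nonplanargraphs}(iv),(v),(vi), leaving exactly (II.12), (III.1), (III.7), (III.11). The only differences are cosmetic (a single up-front contraction constraining $b$ and $t$ simultaneously, and an explicit unit-scaling normalisation that the paper performs implicitly in its eight numbered cases), and your check that $us\equiv 2$, $u(n/2)\equiv n/2$ for a unit $u$ is the same arithmetic the paper uses when identifying Case~7 with $\Gamma_n(\{2,n/2\},\{2\},\{0,n/2\})$.
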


\begin{proof}
Without loss of generality we may assume $|\mathcal{A}|=2$, $|\mathcal{B}|=1$, so let $\mathcal{A}=\{a_1,a_2\}$ ($a_1\not \equiv \pm a_2$~mod~$n$), $\mathcal{B}=\{b\}$, $\mathcal{Q}=\{q,q+s\}$ ($s\not \equiv 0$~mod~$n$) and suppose that $\Gamma$ is connected and planar. If $s\not \equiv \pm a_1,\pm a_2$ (mod~$n$) or $b\not \equiv \pm a_1,\pm a_2$ then contracting the edges $(v_i,v_{i+q}')$ leaves the non-planar circulant $\mathrm{circ}_n(\mathcal{A}\cup \mathcal{B} \cup \{s\})$, so we may assume (without loss of generality) that $s\equiv \pm a_2$~mod~$n$ and either $b=a_1$ or $b=a_2$.

Suppose $n/2 \not \in \mathcal{A}$. Then since the subgraph $\mathrm{circ}_n(\mathcal{A})$ of $\Gamma_n(\mathcal{A},\mathcal{B},\mathcal{Q})$ is planar (but not necessarily connected), Corollary~\ref{cor:planarcirculant} implies that $n$ is even and either $a_2\equiv \pm 2a_1$ or $a_1\equiv \pm 2a_2$. Thus we have the following four cases:
\begin{itemize}
  \item[1.] $\mathcal{A}=\{a_1,\pm 2a_1\}$, $\mathcal{B}=\{a_1\}$, $s=\pm 2a_1$;
  \item[2.] $\mathcal{A}=\{a_2,\pm 2a_2\}$, $\mathcal{B}=\{\pm 2a_2\}$, $s=\pm a_2$;
  \item[3.] $\mathcal{A}=\{a_1,\pm 2a_1\}$, $\mathcal{B}=\{\pm 2a_1\}$, $s=\pm 2a_1$;
  \item[4.] $\mathcal{A}=\{a_2,\pm 2a_2\}$, $\mathcal{B}=\{a_2\}$, $s=\pm a_2$.
\end{itemize}
Since $\Gamma$ is connected, by Lemma~\ref{lem:connectedGamma} we have $(n,a_1)=1$ in Cases~1,3 and $(n,a_2)=1$ in Cases~2,4. Therefore, in Cases 1 and~4, the graph $\Gamma_n(\mathcal{A},\mathcal{B},\mathcal{Q})$ is isomorphic to $\Gamma_n(\{1,2\},\{1\},\{0,2\})$, $\Gamma_n(\{1,2\},\{1\},\{0,1\})$, respectively, each of which which contains the subgraph $\Gamma_n(\{1,2\},\{1\},\{0\})$, which is non-planar by Proposition~\ref{prop:nonplanargraphs}(v). In Case~2 the graph $\Gamma_n(\mathcal{A},\mathcal{B},\mathcal{Q})$ is isomorphic to $\Gamma_n(\{2,1\},\{2\},\{0,1\})$, which contains $\Gamma_n(\{2\},\{2\},\{0,1\})$ which is non-planar by Proposition~\ref{prop:nonplanargraphs}(iv). In Case~3 the graph $\Gamma_n(\mathcal{A},\mathcal{B},\mathcal{Q})$ has a planar embedding (of type (II.12)).

Suppose then that $n/2 \in \mathcal{A}$. Then $\mathcal{A}=\{a,n/2\}$ and so $\mathcal{B}=\{a\}$ or $\mathcal{B}=\{n/2\}$ and we have $\mathcal{Q}=\{q,q\pm a\}$ or $\mathcal{Q}=\{q,q+n/2\}$. Then since $\Gamma$ is connected we have $(n/2,a)=1$. Then the subgraph $\mathrm{circ}_n(\mathcal{A})=\mathrm{circ}_n(a,n/2)$ of $\Gamma_n(\mathcal{A},\mathcal{B},\mathcal{Q})$ is connected and planar so Theorem~\ref{thm:planarcirculant} implies that $n\equiv 2$~mod~$4$ and $a$ is even. Thus we have the following four cases:
\begin{itemize}
  \item[5.] $\mathcal{A}=\{a,n/2\}$, $\mathcal{B}=\{a\}$, $s=\pm a$;
  \item[6.] $\mathcal{A}=\{a,n/2\}$, $\mathcal{B}=\{n/2\}$, $s=n/2$;
  \item[7.] $\mathcal{A}=\{a,n/2\}$, $\mathcal{B}=\{a\}$, $s=n/2$;
  \item[8.] $\mathcal{A}=\{a,n/2\}$, $\mathcal{B}=\{n/2\}$, $s=\pm a$.
\end{itemize}
In Case~7 the graph $\Gamma$ is isomorphic to $\Gamma_n\{ \{2,n/2\}, \{2\}, \{0,n/2\}\}$ which contains $\Gamma_n\{ \{2\}, \{2\}, \{0,n/2\}\}$ which is non-planar by Proposition~\ref{prop:nonplanargraphs}(vi). In the remaining cases $\Gamma$ is planar of type~(III.1),(III.7) or (III.11).
\end{proof}

\begin{lemma}\label{lem:|A|=1|B|=1|Q|=2}
Let $|\mathcal{A}|=1$, $|\mathcal{B}|=1$, $0\not \in \mathcal{A},\mathcal{B}$, $|\mathcal{Q}|=2$, $q\in \mathcal{Q}$, $n\geq 5$ and suppose that $\Gamma=\Gamma_n(\mathcal{A},\mathcal{B},\mathcal{Q})$ is properly given. Then $\Gamma$ is connected and planar if and only if one of the following holds:
\begin{itemize}
  \item[(I.1)] $\{\mathcal{A},\mathcal{B}\} = \{\{\pm s\},\{\pm s\}\}$, $\mathcal{Q}=\{q,q+s\}$, $(n,s)=1$;
  \item[(II.7)] $\{\mathcal{A},\mathcal{B}\} = \{ \{\pm 2s\},\{\pm s\} \}$, $\mathcal{Q}=\{q,q+s\}$, $n$ is even, $(n,s)=1$;
  \item[(II.15)] $\{\mathcal{A},\mathcal{B}\} = \{ \{\pm 2s\},\{\pm s\} \}$, $\mathcal{Q}=\{q,q+2s\}$, $n$ is even, $(n,s)=1$;
  \item[(III.5)] $\{\mathcal{A},\mathcal{B}\} = \{ \{n/2\},\{s\} \}$, $\mathcal{Q}=\{q,q+n/2\}$, $n,s$ are even, $(n/2,s)=1$;
  \item[(III.10)] $\{\mathcal{A},\mathcal{B}\} = \{ \{n/2\},\{\pm s\} \}$, $\mathcal{Q}=\{q,q+s\}$, $n,s$ are even, $(n/2,s)=1$;
  \item[(III.13)] $\{ \mathcal{A},\mathcal{B}\} = \{\{n/2\},\{n/2\}\}$, $\mathcal{Q}=\{q,q+s\}$, $n,s$ are even, $(n/2,s)=1$.
\end{itemize}
\end{lemma}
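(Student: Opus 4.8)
The plan is to adapt the argument of Lemma~\ref{lem:|A|=2|B|=1|Q|=2}. Write $\mathcal{A}=\{a\}$, $\mathcal{B}=\{b\}$ and $\mathcal{Q}=\{q,q+t\}$ with $t\not\equiv 0$~mod~$n$, and suppose $\Gamma$ is connected and planar. Contracting the perfect matching formed by the edges $(v_i,v_{i+q}')$ ($0\leq i\leq n-1$) and deleting any resulting parallel edges yields $\mathrm{circ}_n(\{a,b,t\})$ as a minor of $\Gamma$ (the distance‑$q$ $\mathcal{Q}$‑edges become loops, the distance‑$(q+t)$ ones become distance‑$t$ edges, and the $\mathcal{A}$‑ and $\mathcal{B}$‑edges become distance‑$a$ and distance‑$b$ edges). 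Since planarity passes to minors, $\mathrm{circ}_n(\{a,b,t\})$ is planar; but a properly given circulant with three connections that are pairwise non‑congruent mod~$\pm n$ is non‑planar by Corollary~\ref{cor:planarcirculant}, so at most two of $a,b,t$ are distinct up to sign mod~$n$.

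Next I would split into cases according to which of $a,b,t$ coincide up to sign, using the isomorphism $\Gamma_n(\mathcal{A},\mathcal{B},\mathcal{Q})\cong\Gamma_n(\mathcal{B},\mathcal{A},\{n-q\ |\ q\in\mathcal{Q}\})$, which preserves the form $\mathcal{Q}=\{q,q+t\}$ and so halves the work. If $a\equiv\pm b\equiv\pm t$ then $\Gamma\cong\Gamma_n(\{s\},\{s\},\{q,q+s\})$ for some $s$, and Lemma~\ref{lem:connectedGamma} forces $(n,s)=1$, giving case~(I.1). Otherwise exactly one of $a,b,t$ is distinct from the other two, the contracted circulant is $\mathrm{circ}_n(\mathcal{X})$ with $|\mathcal{X}|=2$, and Corollary~\ref{cor:planarcirculant}(ii),(iii) forces either one connection in $\mathcal{X}$ to be $\pm 2$ times the other or one of them to equal $n/2$. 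Running through the finitely many resulting configurations for $(\mathcal{A},\mathcal{B},\mathcal{Q})$ and using Lemma~\ref{lem:connectedGamma} to determine the connectivity gcd (hence the conditions $(n,s)=1$ or $(n/2,s)=1$ and the parity of $n$, with $n\geq 5$ ruling out degenerate coincidences), each configuration either is one of the six graphs (I.1),(II.7),(II.15),(III.5),(III.10),(III.13) — up to the obvious isomorphisms, e.g.\ rescaling by a unit mod~$n$ when the relevant parameter is coprime to $n$ — or is isomorphic to $\Gamma_n(\{1\},\{1\},\{0,2\})$, $\Gamma_n(\{2\},\{2\},\{0,1\})$ or $\Gamma_n(\{2\},\{2\},\{0,n/2\})$, which are non‑planar by Proposition~\ref{prop:nonplanargraphs}(iii),(iv),(vi). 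For instance, $a\equiv\pm b$ with $t\equiv\pm 2a$ produces the first of these, $a\equiv\pm b\equiv\pm 2t$ the second, and $a\equiv\pm b$ with $t\equiv n/2$ the third.

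For the converse direction it then remains only to observe that each of the six listed families is connected by Lemma~\ref{lem:connectedGamma} (the stated gcd conditions give $d=1$) and is planar because its type has an explicit planar embedding in Figures~\ref{fig:(I.1)}--\ref{fig:(III.14)}; this closes the ``if'' part.

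I expect the principal obstacle to be the case bookkeeping: enumerating the coincidences among $a,b,t$ up to sign with no omission or overlap, correctly identifying which of $a,b,t$ plays the role of $s$, of $2s$, or of $n/2$ in Corollary~\ref{cor:planarcirculant}, and — the most delicate point — justifying the rescaling isomorphisms in the Class~III configurations, where the parameter one wishes to send to $2$ is even and hence not a unit mod~$n$; there one must use that $(n/2,s)=1$ together with $n\equiv 2$~mod~$4$ to produce a unit $u$ with $us\equiv 2$ and $u\cdot n/2\equiv n/2$~mod~$n$. Checking that exactly these six families survive, rather than a proper sub‑ or superset, is where the argument needs the most care.
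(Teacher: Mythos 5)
Your proposal is correct and follows essentially the same route as the paper: contract the matching $(v_i,v_{i+q}')$ to obtain the circulant $\mathrm{circ}_n(\{a,b,t\})$, invoke Heuberger's classification to restrict the coincidences among $a,b,t$ up to sign and $n/2$, eliminate the residual configurations via Proposition~\ref{prop:nonplanargraphs}(iii),(iv),(vi) (exactly the three graphs you name, arising from exactly the cases you name), and confirm the six surviving families by connectivity (Lemma~\ref{lem:connectedGamma}) and the planar embeddings in the figures. The rescaling subtlety you flag in the Class~III cases is real but resolvable exactly as you indicate (choose a unit $u$ with $us\equiv 2$ and note any odd $u$ fixes $n/2$ mod~$n$), and the paper passes over it silently.
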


\begin{proof}
Let $\mathcal{A}=\{a\}, \mathcal{B}=\{b\}, \mathcal{Q}=\{q,q+t\}$ where $t \not \equiv 0$~mod~$n$, and suppose that $\Gamma=\Gamma_n(\mathcal{A},\mathcal{B},\mathcal{Q})$ is connected and planar.
Contracting the edges $(v_i,v_{i+q}')$ leaves the connected and planar (but not necessarily properly given) circulant graph $\Lambda=\mathrm{circ}_n(S)$ where $S=\{a,b,t\}$.

\noindent \textbf{Case 1: $t\equiv n/2, a\not \equiv \pm b$~mod~$n$.} Here $\Lambda=\mathrm{circ}_n(S)$ where $S=\{a,b,n/2\}$. Theorem~\ref{thm:planarcirculant} implies that either $a\equiv n/2$ or $b\equiv n/2$~mod~$n$. Thus $\{\mathcal{A},\mathcal{B}\} = \{\{n/2\},\{s\}\}$ for some $s\not \equiv n/2$~mod~$n$. Since $n\geq 5$ Theorem~\ref{thm:planarcirculant} implies that $s$ is even, $n\equiv 2$~mod~$4$, $(n/2,s)=1$, in which case $\Gamma$ is connected and planar (of type (III.5)).

\noindent \textbf{Case 2: $t\equiv n/2, a\equiv \pm b$~mod~$n$.} Here $\Lambda=\mathrm{circ}_n(S)$ where $S=\{s,n/2\}$, where $s=a$. Since $\Lambda$ is connected we have $(n/2,s)=1$ (which implies, in particular, that $s\not \equiv n/2$~mod~$n$). Theorem~\ref{thm:planarcirculant} implies that $n\equiv 2$~mod~$4$, $s$ is even, $(n/2,s)=1$, in which case $\Gamma$ is isomorphic to $\Gamma_n(\{2\},\{2\},\{0,n/2\})$, which is non-planar by Proposition~\ref{prop:nonplanargraphs}(vi).

\noindent \textbf{Case 3: $t\not \equiv n/2, a\not \equiv \pm b$, $a\not \equiv n/2, b\not \equiv n/2$~mod~$n$.} Here $\Lambda=\mathrm{circ}_n(S)$ where $S=\{a,b,t\}$ so Theorem~\ref{thm:planarcirculant} implies that $a\equiv \pm t$ or $b\equiv \pm t$~mod~$n$ so $\Lambda=\mathrm{circ}_n\{s,t\}$ where $s \not \equiv \pm t$, $s\not \equiv n/2$~mod~$n$, $\{\mathcal{A},\mathcal{B}\} = \{ \{\pm s\}, \{\pm t\} \}$. Since $\Lambda$ is connected and planar we have $n$ is even and either $s\equiv \pm 2t$ or $t \equiv \pm 2s$~mod~$n$. In the case $s\equiv \pm 2t$~mod~$n$ we have $(n,t)=1$ (since $\Lambda$ is connected), in which case $\Gamma$ is connected and planar (of type~(II.7)). In the case $t\equiv \pm 2s$~mod~$n$ we have $(n,s)=1$ (since $\Lambda$ is connected), in which case $\Gamma$ is connected and planar (of type~(II.15)).

\noindent \textbf{Case 4: $t\not \equiv n/2, a\not \equiv \pm b$~mod~$n$, and ($a\equiv n/2$ or $b\equiv n/2$).} Here $\{\mathcal{A},\mathcal{B}\} = \{ \{n/2\}, \{s\}\}$ (for some $s\neq n/2$) so $\Lambda=\mathrm{circ}_n(S)$ where $S=\{n/2,s,t\}$ so Theorem~\ref{thm:planarcirculant} implies that $s\equiv \pm t$~mod~$n$ so $\Lambda=\mathrm{circ}_n(\{n/2,s\})$. Since $\Lambda$ is connected and planar we have $(n/2,s)=1$, $s$ is even, and $n\equiv 2$~mod~$4$. In this case $\Gamma$ is connected and planar (of type~(III.10)).

\noindent \textbf{Case 5: $t\not \equiv n/2, a\equiv \pm b$, $t\not \equiv \pm a$~mod~$n$.} Here $\mathcal{A}= \{\pm s\}$, $\mathcal{B}= \{\pm s\}$  (for some $s$) so $\Lambda=\mathrm{circ}_n(S)$ where $S=\{s,t\}$. Suppose $s\equiv n/2$~mod~$n$. Then Theorem~\ref{thm:planarcirculant} implies that $t$ is even, $n\equiv 2$~mod~$4$, $(n/2,t)=1$, in which case $\Gamma$ is planar (of type~(III.13)). Suppose then that $s\not \equiv n/2$~mod~$n$. Then Theorem~\ref{thm:planarcirculant} implies that $n$ is even, $s\equiv \pm 2t$ or $t\equiv \pm 2s$~mod~$n$. In the case $s\equiv \pm 2t$~mod~$n$, we have $(n,t)=1$ (since $\Lambda$ is connected) so we may assume $t=1$, in which case $\Gamma$ is isomorphic to $\Gamma_n(\{2\},\{2\},\{0,1\})$, which is non-planar by Proposition~\ref{prop:nonplanargraphs}(iv). In the case $t\equiv \pm 2s$~mod~$n$, we have $(n,s)=1$ (since $\Lambda$ is connected) so we may assume $s=1$ so $\Gamma$ is isomorphic to $\Gamma_n(\{1\},\{1\},\{0,2\})$ which is non-planar by Proposition~\ref{prop:nonplanargraphs}(iii).

\noindent \textbf{Case 6: $t\not \equiv n/2, a\equiv \pm b$, $t\equiv \pm a$~mod~$n$.} Here $\mathcal{A}=\{\pm t\}$, $\mathcal{B}=\{\pm t\}$, $\mathcal{Q}=\{q,q+t\}$ so since $\Gamma$ is connected we have $(n,t)=1$, in which case $\Gamma$ is connected and planar (of type~(I.1)).
\end{proof}

\begin{lemma}\label{lem:|A|=2|B|=1|Q|=1}
Let $\{|\mathcal{A}|, |\mathcal{B}|\}=\{2,1\}$, $0\not \in \mathcal{A},\mathcal{B}$, $\mathcal{Q}=\{q\}$, $n\geq 5$ and suppose that $\Gamma=\Gamma_n(\mathcal{A},\mathcal{B},\mathcal{Q})$ is properly given. Then $\Gamma$ is connected and planar if and only if one of the following holds:
\begin{itemize}
  \item[(II.14)] $\{\mathcal{A},\mathcal{B}\}=\{\{s,\pm 2s\}, \{\pm 2s\}\}$,  where $n$ is even, $(n,s)=1$;
  \item[(III.4)] $\{\mathcal{A},\mathcal{B}\}=\{ \{s,n/2\}, \{n/2\}\}$ where $s$ is even, $(n/2,s)=1$;
  \item[(III.8)] $\{\mathcal{A},\mathcal{B}\}=\{ \{s,n/2\}, \{\pm s\}\}$ where $s$ is even, $(n/2,s)=1$.
\end{itemize}
\end{lemma}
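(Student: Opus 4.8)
The plan is to follow the template already set by Lemma~\ref{lem:|A|=2|B|=1|Q|=2}. Using the isomorphism $\Gamma_n(\mathcal{A},\mathcal{B},\mathcal{Q})\cong\Gamma_n(\mathcal{B},\mathcal{A},\{\,n-q\mid q\in\mathcal{Q}\,\})$ we may assume $|\mathcal{A}|=2$ and $|\mathcal{B}|=1$; write $\mathcal{A}=\{a_1,a_2\}$ with $a_1\not\equiv\pm a_2\bmod n$, $\mathcal{B}=\{b\}$, $\mathcal{Q}=\{q\}$, and, via the isomorphism that shifts the primed vertices, we may take $q=0$. Suppose $\Gamma$ is connected and planar. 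Contracting every edge $(v_i,v_i')$ exhibits the circulant graph $\mathrm{circ}_n(\mathcal{A}\cup\mathcal{B})$ as a connected minor of $\Gamma$, hence planar; since $a_1\not\equiv\pm a_2$, if in addition $b\not\equiv\pm a_1$ and $b\not\equiv\pm a_2$ then $\{a_1,a_2,b\}$ is a properly given connection set of size $3$, so $\mathrm{circ}_n(\mathcal{A}\cup\mathcal{B})$ is non-planar by Corollary~\ref{cor:planarcirculant}, a contradiction. Relabelling the $a_i$ and replacing $a_1$ by $-a_1$ if necessary (which does not change $\Gamma$), we may therefore assume $\mathcal{B}=\{a_1\}$.

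Next, $\mathrm{circ}_n(\mathcal{A})$ is the induced subgraph of $\Gamma$ on $\{v_0,\dots,v_{n-1}\}$, hence planar and properly given, while Lemma~\ref{lem:connectedGamma} applied to the connected graph $\Gamma$ gives $\gcd(n,a_1,a_2)=1$. By Corollary~\ref{cor:planarcirculant} — case~(i) being impossible since $|\mathcal{A}|=2$ — we are in one of two situations: (ii) $\mathcal{A}=\{s,\pm2s\}$ with $n$ even, or (iii) $\mathcal{A}=\{s,n/2\}$ with $s$ even and $n\equiv2\bmod4$ (for $n\ge5$ these are genuinely distinct, and in the latter $n/2\in\mathcal{A}$); in both cases the condition $\gcd(n,a_1,a_2)=1$ forces $(n,s)=1$, respectively $(n/2,s)=1$. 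Since $b=a_1$ is one of the elements of $\mathcal{A}$ up to sign, this leaves the subcases $b\equiv\pm s$ or $b\equiv\pm2s$ in situation~(ii), and $b\equiv\pm s$ or $b=n/2$ in situation~(iii).

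It remains to resolve these four subcases. In situation~(ii) with $b\equiv\pm s$, rescaling the index set by $s^{-1}$ (legitimate since $(n,s)=1$) shows $\Gamma\cong\Gamma_n(\{1,2\},\{1\},\{0\})$, which is non-planar by Proposition~\ref{prop:nonplanargraphs}(v) as $n$ is even; this subcase is eliminated. The remaining three subcases yield exactly $\{\mathcal{A},\mathcal{B}\}=\{\{s,\pm2s\},\{\pm2s\}\}$ (type (II.14)), $\{\mathcal{A},\mathcal{B}\}=\{\{s,n/2\},\{\pm s\}\}$ (type (III.8)), and $\{\mathcal{A},\mathcal{B}\}=\{\{s,n/2\},\{n/2\}\}$ (type (III.4)), with the stated arithmetic conditions on $n$ and $s$. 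For the converse, each of these three families is connected by a direct gcd computation via Lemma~\ref{lem:connectedGamma}, and is planar because it admits the explicit planar embedding shown in Figures~\ref{fig:(II.14)}, \ref{fig:(III.8)} and~\ref{fig:(III.4)}, respectively.

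The forward case analysis is routine and entirely parallel to the earlier lemmas: every non-planar configuration reduces to one of the graphs in Proposition~\ref{prop:nonplanargraphs} (here only part~(v) is needed), and the bookkeeping — checking that $b\in\{\pm a_1,\pm a_2\}$ together with the circulant classification of $\mathcal{A}$ really exhausts the cases — is straightforward. I expect the only genuinely non-mechanical point, and hence the main obstacle, to be the positive direction: verifying that the three surviving families (II.14), (III.4), (III.8) are in fact planar, which is done by producing the concrete drawings recorded in the referenced figures rather than by any abstract argument.
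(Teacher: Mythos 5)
Your proposal is correct and follows essentially the same route as the paper's proof: contract onto a circulant minor to force $\mathcal{B}\subseteq\{\pm a_1,\pm a_2\}$, apply Heuberger's classification (Theorem~\ref{thm:planarcirculant}/Corollary~\ref{cor:planarcirculant}) together with the connectivity gcd to pin down $\mathcal{A}$, eliminate the one bad subcase via $\Gamma_n(\{1,2\},\{1\},\{0\})$ and Proposition~\ref{prop:nonplanargraphs}(v), and settle planarity of the surviving families (II.14), (III.4), (III.8) by the explicit embeddings in the figures. The only difference is cosmetic: the paper applies the connected-circulant theorem directly to the contraction $\mathrm{circ}_n(\mathcal{A})$, whereas you use the possibly disconnected induced circulant plus the gcd from Lemma~\ref{lem:connectedGamma}, which amounts to the same thing.
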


\begin{proof}
Without loss of generality we may assume $|\mathcal{A}|=2$, $|\mathcal{B}|=1$, so let $\mathcal{A}=\{a_1,a_2\}$ ($a_1\not \equiv \pm a_2$~mod~$n$), $\mathcal{B}=\{b\}$, $\mathcal{Q}=\{q\}$ and suppose that $\Gamma_n(\mathcal{A},\mathcal{B},\mathcal{Q})$ is connected and planar. Contracting the edges $(v_i,v_{i+q}')$ leaves the connected, planar circulant graph $\Lambda=\mathrm{circ}_n(\{a_1,a_2,b\})$. Since $\Lambda$ is planar Theorem~\ref{thm:planarcirculant} implies that $b\equiv \pm a_1$ or $b\equiv \pm a_2$~mod~$n$. Without loss of generality let $b=a_1$, so $\Lambda=\mathrm{circ}_n(\{a_1,a_2\})=\mathrm{circ}_n(\mathcal{A})$ and $\Gamma=\Gamma_n(\{a_1,a_2\},\{a_1\},\{q\})$. Since $\Lambda$ is connected we have $(n,a_1,a_2)=1$ and since it is planar Theorem~\ref{thm:planarcirculant} implies that one of the following holds:
\begin{itemize}
  \item[1.] $\mathcal{A}=\{a,\pm 2a\}$, $\mathcal{B}=\{\pm 2a\}$, where $n$ is even, $(n,a)=1$;
  \item[2.] $\mathcal{A}=\{a,\pm 2a\}$, $\mathcal{B}=\{\pm a\}$, where $n$ is even, $(n,a)=1$;
  \item[3.] $\mathcal{A}=\{a,n/2\}$, $\mathcal{B}=\{n/2\}$, where $n\equiv 2$~mod~$4$, $a$ is even, $(n/2,a)=1$;
  \item[4.] $\mathcal{A}=\{a,n/2\}$, $\mathcal{B}=\{\pm a\}$, where $n\equiv 2$~mod~$4$, $a$ is even, $(n/2,a)=1$.
\end{itemize}
In case~2 $\Gamma$ is isomorphic to $\Gamma_n(\{1,2\},\{1\},\{0\})$, which is non-planar by Proposition~\ref{prop:nonplanargraphs}(v). In cases~1,3,4 the graph $\Gamma$ is connected and planar of type~(II.14),(III.4),(III.8), respectively.
\end{proof}

\begin{lemma}\label{lem:|A|=|B|=|Q|=1}
Let $|\mathcal{A}|=|\mathcal{B}|=1$, $0\not \in \mathcal{A},\mathcal{B}$, $\mathcal{Q}=\{q\}$, $n\geq 5$ and suppose that $\Gamma=\Gamma_n(\mathcal{A},\mathcal{B},\mathcal{Q})$ is properly given. Then $\Gamma$ is connected and planar if and only if one of the following holds:
\begin{itemize}
  \item[(I.3)] $\{\mathcal{A},\mathcal{B}\}=\{\{s\},\{\pm s\}\}$, $(n,s)=1$;
  \item[(II.11)] $\{\mathcal{A},\mathcal{B}\}=\{\{s\},\{\pm 2s\}\}$, $n$ is even, $(n,s)=1$;
  \item[(III.6)] $\{\mathcal{A},\mathcal{B}\}=\{\{s\},\{n/2\}\}$ where $s$ is even, $(n/2,s)=1$.
\end{itemize}
\end{lemma}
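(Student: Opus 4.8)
The plan is to reduce everything to the classification of planar circulant graphs. Write $\mathcal{A}=\{a\}$, $\mathcal{B}=\{b\}$ and $\mathcal{Q}=\{q\}$ with $a,b\in\{1,\ldots,n-1\}$. Because $|\mathcal{Q}|=1$, the cross-edges $(v_i,v_{i+q}')$ ($0\leq i\leq n-1$) form a perfect matching between the two $\theta$-orbits of vertices, and contracting all of them collapses $\Gamma$ onto the (not necessarily properly given) circulant graph $\mathrm{circ}_n(\{a,b\})$: identifying $v_i$ with $v_{i+q}'$, an $\mathcal{A}$-edge becomes an edge of the form $(u_i,u_{i+a})$ and a $\mathcal{B}$-edge one of the form $(u_j,u_{j+b})$, and no loops are created since $0\notin\mathcal{A},\mathcal{B}$. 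Edge contraction preserves planarity and cannot disconnect a graph, while $G/e$ connected forces $G$ connected; hence $\Gamma$ is connected if and only if $\mathrm{circ}_n(\{a,b\})$ is connected, and if $\Gamma$ is planar then so is $\mathrm{circ}_n(\{a,b\})$. By Lemma~\ref{lem:connectedcirc} (equivalently Lemma~\ref{lem:connectedGamma}) these are connected precisely when $(n,a,b)=1$.

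For the ``only if'' direction, suppose $\Gamma$ is connected and planar and apply Corollary~\ref{cor:planarcirculant} to the connected planar circulant $\mathrm{circ}_n(\{a,b\})$. If $b\equiv\pm a$~mod~$n$ then $\mathrm{circ}_n(\{a,b\})=\mathrm{circ}_n(\{a\})$, a disjoint union of cycles and hence automatically planar, and connectivity forces $(n,a)=1$; writing $s=a$ this is exactly case~(I.3). If $b\not\equiv\pm a$~mod~$n$ then $\mathrm{circ}_n(\{a,b\})$ is properly given with two distinct elements, so Corollary~\ref{cor:planarcirculant}(i) cannot occur and we are left with cases (ii) or (iii). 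In case (ii), $\{a,b\}=\{s,\pm 2s\}$, and combining ``$n/(n,s)$ even'' with $(n,a,b)=(n,s)=1$ gives ``$n$ even'', which is case~(II.11). In case (iii), $\{a,b\}=\{s,n/2\}$, and since $(n,n/2)=n/2$ we have $(n,a,b)=(n/2,s)=1$, whence $2\mid s$ and $n\equiv 2$~mod~$4$, which is case~(III.6). Throughout one uses freely that $\Gamma_n(\{s\},\{b\},\{q\})$ is unchanged when $b$ is replaced by $n-b$ (since the $\mathcal{B}$-edge set is the same), together with the symmetry $\Gamma_n(\mathcal{A},\mathcal{B},\mathcal{Q})\cong\Gamma_n(\mathcal{B},\mathcal{A},\{n-q\ |\ q\in\mathcal{Q}\})$, so that the structures produced by Corollary~\ref{cor:planarcirculant} genuinely collapse to the unordered pairs $\{\mathcal{A},\mathcal{B}\}$ listed.

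For the ``if'' direction, in each of (I.3), (II.11), (III.6) the stated gcd hypotheses give $(n,a,b)=1$, so $\Gamma$ is connected by the equivalence above. Planarity, which does \emph{not} follow automatically from planarity of the circulant contraction, must be exhibited directly: for (I.3) this is immediate since, after rescaling indices by $s^{-1}$~mod~$n$, $\Gamma\cong\Gamma_n(\{1\},\{1\},\{q'\})$ is a circular ladder; for (II.11) and (III.6) it is witnessed by the explicit planar embeddings of the corresponding figures. The only real obstacle is bookkeeping in the ``only if'' direction --- ensuring that every triple thrown up by Corollary~\ref{cor:planarcirculant} is matched to exactly one of the three listed pairs, with none missed or double-counted --- but beyond that the argument is a routine application of Corollary~\ref{cor:planarcirculant}, and this is the most straightforward of the four remaining cases.
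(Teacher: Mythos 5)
Your proposal is correct and follows essentially the same route as the paper: split off the case $b\equiv\pm a$ (type (I.3)), otherwise contract the perfect matching of $\mathcal{Q}$-edges onto the properly given circulant $\mathrm{circ}_n(\{a,b\})$ and invoke Heuberger's classification to land in cases (II.11) and (III.6), with connectivity controlled by the gcd and the converse direction supplied by the explicit planar embeddings. The only cosmetic difference is that you route the circulant step through Corollary~\ref{cor:planarcirculant} together with Lemma~\ref{lem:connectedcirc}, whereas the paper applies Theorem~\ref{thm:planarcirculant} directly to the (already connected) contraction; the content is the same.
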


\begin{proof}
Let $\mathcal{A}=\{a\},\mathcal{B}=\{b\},\mathcal{Q}=\{q\}$ and suppose that $\Gamma_n(\mathcal{A},\mathcal{B},\mathcal{Q})$  is connected and planar. If $a \equiv \pm b$~mod~$n$ then $\Gamma$ has $(n,a)$ components, so is connected if and only if $(n,a)=1$, in which case there is a planar embedding of $\Gamma$ (of type~(I.3)). Suppose then that $a \not \equiv \pm b$~mod~$n$. Contracting the edges $(v_i,v_{i+q}')$  ($0\leq i \leq n-1$) contracts $\Gamma$ to the connected, planar, properly given, circulant $\mathrm{circ}_n(\mathcal{A}\cup \mathcal{B})=\mathrm{circ}_n(\{a,b\})$ which, by Theorem~\ref{thm:planarcirculant}, is connected and planar if and only if either
\begin{itemize}
  \item[1.] $\{\mathcal{A},\mathcal{B}\}=\{\{s\},\{\pm 2s\}\}$, where $(n,s)=1$, $n$ is even; or
  \item[2.] $\{\mathcal{A},\mathcal{B}\}=\{\{s\},\{n/2\}\}$, where $s$ is even, $(n/2,s)=1$, $n\equiv 2$~mod~$4$.
\end{itemize}
In each of these cases $\Gamma$ is connected and planar (of type~(II.11) or~(III.6), respectively).
\end{proof}

We are now in a position to prove Theorem~\ref{mainthm:planarWhiteheadgraphs}.

\begin{proof}[Proof of Theorem~\ref{mainthm:planarWhiteheadgraphs}.]
Figures~\ref{fig:(I.1)}--\ref{fig:(III.14)} show that $\Gamma_n(\mathcal{A},\mathcal{B},\mathcal{Q})$ is planar in all cases of Table~\ref{table:planarWHG}, and by Lemma~\ref{lem:connectedGamma} it is connected. Suppose then that $\Gamma$ is connected and planar.

If $n\leq 4$ then the result follows from Lemma~\ref{lem:n=2or3} so we may assume that $n\geq 5$. Lemmas~\ref{lem:nonplanarsizesPART0},\ref{lem:nonplanarsizesPART1},\ref{lem:nonplanarsizesPART2} give that $|\mathcal{A}|\leq 2$, $|\mathcal{B}|\leq 2$, $|\mathcal{Q}|\leq 3$, $\{|\mathcal{A}|,|\mathcal{B}|\}\neq \{2,2\}$. By Lemma~\ref{lem:A=B=empty} we may assume that $\mathcal{A}$ and $\mathcal{B}$ are not both empty. Then if $|\mathcal{Q}|\leq 2$ and either $\mathcal{A}=\emptyset$ or $\mathcal{B}=\emptyset$ then the result follows from Lemma~\ref{lem:A=emptyset}. If $|\mathcal{Q}|=3$ then the result follows from Lemma~\ref{lem:|Q|=3}. Therefore we may assume $\mathcal{A}\neq \emptyset$, $\mathcal{B}\neq \emptyset$, $|\mathcal{Q}|\leq 2$, so in particular $|\mathcal{A}|+|\mathcal{B}|\geq 2$.
If $|\mathcal{A}|+|\mathcal{B}|=3$ and $|\mathcal{Q}|=2$ then the result follows from Lemma~\ref{lem:|A|=2|B|=1|Q|=2};
if $|\mathcal{A}|+|\mathcal{B}|=2$ and $|\mathcal{Q}|=2$ then the result follows from Lemma~\ref{lem:|A|=1|B|=1|Q|=2};
if $|\mathcal{A}|+|\mathcal{B}|=3$ and $|\mathcal{Q}|=1$ then the result follows from Lemma~\ref{lem:|A|=2|B|=1|Q|=1};
and if $|\mathcal{A}|+|\mathcal{B}|=2$ and $|\mathcal{Q}|=1$ then the result follows from Lemma~\ref{lem:|A|=|B|=|Q|=1}.
\end{proof}

Corollary~\ref{maincor:allconditions} follows immediately from Theorem~\ref{mainthm:planarWhiteheadgraphs} and Lemma~\ref{lem:3457isGamma}.

\section*{Acknowledgement}

We thank the referee for the helpful comments that have improved the paper.

\bigskip

  \textsc{Department of Mathematics and Maxwell Institute for Mathematical Sciences, Heriot-Watt University, Edinburgh
EH14 4AS, UK.}\par\nopagebreak
  \textit{E-mail address}, \texttt{J.Howie@hw.ac.uk}

  \medskip

  \textsc{Department of Mathematical Sciences, University of Essex, Wivenhoe Park, Colchester, Essex CO4 3SQ, UK.}\par\nopagebreak
  \textit{E-mail address}, \texttt{Gerald.Williams@essex.ac.uk}

\clearpage

\begin{table}
\begin{itemize}
\item \textbf{Class~I:} $(n,s)=1$.
\begin{center}

\caption{Type~(III.14)\label{fig:(III.14)}}
\end{figure}
\end{document}